\date{}
\renewcommand{\uppercasenonmath}[1]{}
\theoremstyle{plain}
\newtheorem{theorem}{Theorem}[section]
\newtheorem{proposition}[theorem]{Proposition}
\newtheorem{lemma}[theorem]{Lemma}
\newtheorem{corollary}[theorem]{Corollary}
\newtheorem*{open question}{Open Question}
\newtheorem{definition}[theorem]{Definition}
\newcommand{\C}{\boldsymbol{{C}}}
\newcommand{\K}{\boldsymbol{{K}}}
\newcommand{\A}{\boldsymbol{{A}}}
\newcommand{\D}{\boldsymbol{{D}}}
\theoremstyle{definition}
\theoremstyle{remark}
\newtheorem{remark}[theorem]{Remark}
\newcommand{\Proj}{\mathcal{P}}
\newcommand{\Add}{\mathrm{Add}}
\newcommand{\Id}{\mathrm{Id}}
\def\fPD{{\rm fPD}}
\def\fpD{{\rm fpD}}
\def\Mod{{\rm Mod}}
\def\fPD{{\rm fPD}}
\def\cwd{{\rm w.gl.dim}}
\def\Hom{{\rm Hom}}
\def\Ext{{\rm Ext}}
\def\Ker{{\rm Ker}}
\def\Im{{\rm Im}}
\def\Coker{{\rm Coker}}
\def\id{{\rm id}}
\newcommand{\FPR}{\mathcal{FPR}}
\def\fP{\mathcal{P}}
\def\fI{\mathcal{I}}
\def\Cone{{\rm Cone}}
\def\Hh{{\rm H}}
\begin{document}
	\begin{center}
		{\large  \bf Little finitistic dimensions and generalized derived categories\footnote{{Key Words:} left little finitistic dimension; $n$-exact sequence;  $n$-projective module; $n$-exact complex; $n$-derived category;  derived category.\\
				{2020 MSC:}  18G20, 16E35,16E05.\\
$^*$ Corresponding author.}}
		
		\vspace{0.5cm}   Xiaolei Zhang$^{a}$,~ Tiwei Zhao$^{b,*}$,~ Dingguo Wang$^{b}$

		{\footnotesize  a.\ School of Mathematics and Statistics, Shandong University of Technology, Zibo 255049, P.R. China\\
			b.\ School of Mathematical Sciences, Qufu Normal University, Qufu 273165, P.R. China
			
			E-mail: zxlrghj@163.com, tiweizhao@qfnu.edu.cn, dgwang@qfnu.edu.cn\\
		}
	\end{center}
	
	\bigskip
	\centerline { \bf  Abstract}
	\bigskip
	\leftskip10truemm \rightskip10truemm \noindent
	
	In this paper, we introduced a generalization of the derived category, which is called the $n$-derived category and denoted by $\D_{n}(R)$, of a given ring $R$ for each $n\in\mathbb{N}\cup\{\infty\}$. The $n$-derived category of a ring is proved to be very closely connected with its left little finitistic dimension. We also introduce and investigate the notions of $n$-exact sequences, $n$-projective (resp., $n$-injective) modules and $n$-exact complexes. In particular, we characterize the left little finitistic dimensions in terms of all above notions.  Finally, we  build a connection of the classical derived categories and $n$-derived categories.
	\vbox to 0.3cm{}\\
	
	\bigskip

	\leftskip0truemm \rightskip0truemm
	\bigskip
	
	\section{Introduction}
	
	In this paper, $R$  is always an associative ring with identity and $R$-Mod is the category of all left $R$-modules. All $R$-modules are left $R$-modules unless otherwise stated. Given a ring $R$, the left little finitistic dimension $l.\fpD(R)$ of $R$, defined to be the supremum of the projective dimensions of  finitely generated $R$-modules with finite projective dimension,  was first introduced by Bass \cite{B05}.  The left little finitistic dimensions of left  Noetherian rings, especially left  Artinian rings, have attracted many algebraists (e.g.  \cite{Chen-Xi17,Wei09,Xi06,Zheng-Huang20,Huisgen92}). However, it is difficult to dig deeper for non-Noetherian rings, since the syzygies of finitely generated modules are  not finitely generated over non-Noetherian rings in general. To amend this gap, Glaz \cite{g89} revised the notion of left little finitistic dimension of a ring $R$.  Let $R$ be a ring and  $M$  a left $R$-module. Then $M$ is said to have a finite projective resolution, denoted by $M\in\FPR$, if  there exists an integer $n$ and an exact sequence
	$$0\rightarrow P_n\rightarrow P_{n-1}\rightarrow \dots\rightarrow P_1\rightarrow P_0\rightarrow M\rightarrow 0$$
	with each $P_i$  finitely generated projective. We denote $\Proj^{\leq n}$ (resp., $\Proj^{<n}$) to be the class of left $R$-modules with projective dimensions at most (resp., less than) $n$ in  $\FPR$ for each $n=\{0,1,2,\cdots\}\cup\{\infty\}$. The left little finitistic dimension of a ring $R$, introduced by Glaz \cite{g89} and denoted by $\fPD(R)$, is defined to be the supremum of the projective dimensions of $R$-modules in $\FPR$. Clearly, $l.\fPD(R)\leq n$ if and only if $\FPR=\Proj^{\leq n}$. Some characterizations of Glaz version's little finitistic dimension  of commutative rings are given by the first author and Wang \cite{ZW21}.
	
	Let $(\mathscr{A},\mathscr{E})$ be an exact category in the sense of Quillen \cite{Q73} and $\K(\mathscr{A})$ its homotopy category. Let $\A(\mathscr{A})$ be the full subcategory of $\K(\mathscr{A})$  consisting complexes $\cdots\rightarrow X^{i}\xrightarrow{d^{i}} X^{i+1}\rightarrow{}\cdots$ where each $d^{i}$ can naturally decompose into an admissible monomorphism and  an admissible epimorphism. Then $\A(\mathscr{A})$ is a thick subcategory of $\K(\mathscr{A})$.  By Neeman \cite{N90}, one can define the derived category of the exact category  $(\mathscr{A},\mathscr{E})$  to be Verdier quotient of $\K(\mathscr{A})$ by $\A(\mathscr{A})$, i.e.,  $\D(\mathscr{A}):=\K(\mathscr{A})/\A(\mathscr{A})$. For example, if the exact structure consists of all short exact sequences in $R$-Mod, then the Neeman version's derived  category is exactly the classical derived category $\D(R)$. In 2016, Zheng and Huang \cite{ZZ16} utilized the  exact structure of all pure  short exact sequences in $R$-Mod to investigate the notion of pure derived categories. Recently, Tan, Wang and Zhao generalized the pure derived categories to $n$-derived categories  in terms of $n$-exact sequences for each $n=\{0,1,2,\cdots\}\cup\{\infty\}$. They also built a connection of the classical derived categories and $n$-derived categories.
	
	To establish a relationship between the little finitistic dimension of a ring $R$  and the relative  derived categories introduced by  Neeman \cite{N90},  we consider,  for each $n=\{0,1,2,\cdots\}\cup\{\infty\}$, the exact structure consisting of all $n$-exact sequences, i.e.,  short exact sequences in $R$-Mod which keep exactness under the functor $\Hom_R(P,-)$ where $P\in \Proj^{< n+1}.$ The  corresponding derived categories, denoted by $\D_{n}(R)$, is called the $n$-derived category of $R$. Moreover, the classical bounded derived category $\D^b(R)$ can be seen as a Verdier quotient of bounded $n$-derived category $\D^b_{n}(R)$ (see Theorem \ref{last}). To obtain these results, we also introduce and study the notions of $n$-projective (resp., $n$-injective) modules and $n$-exact complexes. The left little finitistic dimension can be  characterized in terms of all these notions (See Theorem \ref{fpd-exact-seq}, Corollary \ref{fnfn+1}, Corollary \ref{fpd-exact-comp} and Theorem \ref{fpd-derived}). Besides, we also compare our $n$-exact notions with Zheng and Huang's pure exact notions defined in \cite{ZZ16}.
	
	The paper is organized as follows. In Section 2, we introduce the class $\mathscr{E}_{n}$ of  $n$-exact sequences in terms of exactness of the Hom-functor, and give the Cohen' theorem for $n$-exact sequences if the basic ring is commutative. We also show that, for a given  ring $R$,  $l.\fPD(R)\leq n$ if and only if  $\mathscr{E}_{n}=\mathscr{E}_{{n+1}}$, if and only if $\mathscr{E}_{n}=\mathscr{E}_{\infty}$.
	To keep the exactness of Hom-functor on $n$-exact sequences, we give the notions of $n$-projective modules and $n$-injective modules in Section 3. We also show that, for any ring (resp., commutative ring) $R$, $l.\fPD(R)\leq n$ if and only if every $(n+1)$-projective (resp., $(n+1)$-injective) module is  ${n}$-projective (resp., ${n}$-injective), if and only if every ${\infty}$-projective (resp., ${\infty}$-injective) module is  ${n}$-projective (resp., ${n}$-injective). Besides, we also study the (pre)cover and (pre)envelope properties of $n$-projective modules and $n$-injective modules. To get of derived  level of $n$-exactness, we introduce the $n$-exact complex in Section 4. We also characterize the little finitistic dimension by $n$-exact complexes which is similar with the case of $n$-exact sequences. In Section 5, we finally introduce the $n$-derived category $\D_{n}(R)$ of a ring $R$.  For any $m> n$, $\D^{\ast}_{n}(R)$ is a  Verdier quotient of $ \D^{\ast}_{m}(R)$ by some thick subcategory. Moreover,   $l.\fPD(R)\leq n$ if and only if   $\D^{\ast}_{n}(R)$ is naturally triangulated equivalence to $ \D^{\ast}_{m}(R)$ for any (or some) $\D^{\ast}_{m}(R)$ with $m> n$. Besides, some classical results can also be extended to  $n$-derived categories.  In Section 6, we connect the  classical bounded derived category $\D^b(R)$ with our bounded $n$-derived category $\D^b_{n}(R)$. In fact, we show that there is a triangulated equivalence $\D^b(R)\cong \D^b_{n}(R)/\K^b_{ac}(\fP_n)$, where $\K^b_{ac}(\fP_n)$ is the homotopy category of bounded exact complexes of $n$-projective modules.

	\section{Exact sequences induced by $\FPR$}
	
	For a ring $R$, we denote by $R$-mod to be the full subcategory of all left $R$-modules that have finitely generated projective resolutions, i.e., those left $R$-modules $M$  fitting into an exact sequence
	$$\dots\rightarrow P_m\rightarrow P_{m-1}\rightarrow \dots\rightarrow P_1\rightarrow P_0\rightarrow M\rightarrow 0$$ where each $P_i$ is finitely generated projective.
	
	\begin{definition} {\rm  Let $M$ be a left $R$-module. We say $M$ that has a finite projective resolution $($of length at most $n)$, if there is an exact sequence
			$$0\rightarrow P_n\rightarrow P_{n-1}\rightarrow \dots\rightarrow P_1\rightarrow P_0\rightarrow M\rightarrow 0$$ where each $P_i$ is a finitely generated projective left $R$-module.}
	\end{definition}
	
We denote by $\Proj^{< \infty}$ the class of all $R$-modules with finite projective resolutions, $\Proj^{< 0}=\{0\}$ and $\Proj^{< n}$ the subclass of $R$-modules with  finite projective resolution of length less than $n$ for some positive integer $n$.
	
	Throughout this paper,  we always fix $n$ to be a non-negative integer or $\infty$. Besides, we always identity $\infty+1$ and $\infty$.

	The left little finitistic dimension of a ring $R$, denoted by $l.\fPD(R)$, is defined to be the supremum of the projective dimensions of all left $R$-modules in $\FPR$. Clearly, $l.\fPD(R)\leq n$ if and only if $\Proj^{<\infty}=\Proj^{< n+1}$.

	\begin{definition} {\rm  A short exact sequence of left $R$-modules $$0\rightarrow A\rightarrow B\rightarrow C\rightarrow 0$$ is said to be {\it $n$-exact} provided that $$0\rightarrow \Hom_R(M,A)\rightarrow \Hom_R(M,B)\rightarrow \Hom_R(M,C)\rightarrow 0$$ is exact for any $M\in \Proj^{<n+1}$.}
	\end{definition}

	Trivially, short $0$-exact sequences are precisely short exact sequences. We denote by  $\mathscr{E}_{n}$ the class of all $n$-exact sequences. Then, for each $n$,  $\mathscr{E}_{n}$ is an  exact structure of $R$-Mod in the sense of Quillen \cite{Q73}.
	We have the following inclusions:
	$$\mathscr{E}_{\infty}\subseteq\cdots\subseteq\mathscr{E}_{{n+1}}\subseteq \mathscr{E}_{n} \subseteq\cdots\subseteq \mathscr{E}_{{1}}\subseteq \mathscr{E}_{{0}}.$$

	The following result can be seen as   Cohen' theorem for  $n$-exact sequences (see \cite[Theorem 3.69]{R09}).
	
	\begin{theorem}\label{cohen} Let $R$ be a ring and  $\Upsilon: 0\rightarrow A\xrightarrow{f} B\xrightarrow{g} C\rightarrow 0$ be  a short exact sequence of $R$-modules. Then the following statements are equivalent for each $n$.
		\begin{enumerate}
			\item  $\Upsilon: 0\rightarrow A\xrightarrow{f} B\xrightarrow{g} C\rightarrow 0$  is $n$-exact.
			\item For any commutative diagram with  exact rows
			$$\xymatrix@R=25pt@C=25pt{
				0 \ar[r]^{}  & K\ar[d]_{u}\ar[r]^{h} &R^l \ar@{.>}[ld]_{w}\ar[d]^{v} & &  \\
				0 \ar[r]^{} & A\ar[r]^{f} & B \ar[r]^{g} &C\ar[r]^{} &  0\\}$$
			with $0<l<\infty$ and $ K\in \Proj^{<n}$, there exists $w:R^l\rightarrow A$ such that $u=wh$.
			\item For all $0<m<\infty$, $0<l<\infty$ and all systems of $R$-linear equations $(\mathscr{S})$ in the variables $x_i (1\leq i\leq l)$ with $a_j\in A (1\leq j\leq m)$, $r_{ij}(1\leq i\leq l,1\leq j\leq m)$
			$$(\mathscr{S})\ \ \sum\limits_{1\leq i\leq l}r_{ij}x_i=f(a_j) $$
			satisfying that $\langle \sum\limits_{1\leq i\leq l} r_{ij} e_i \mid 1\leq  j\leq m \rangle \in \Proj^{<n}$ where $\{e_i\mid 1\leq i\leq l\}$ is the basis of a free $R$-module, the following holds:
			\begin{center}
				$(\mathscr{S})$ has a solution in $f(A)$ whenever $(\mathscr{S})$ has a solution in $B$.
			\end{center}
		\end{enumerate}
Now suppose $R$ is a commutative ring. Then all above are equivalent to:

  $\textit{(4)}$ The natural sequence $0\rightarrow M\otimes_RA\xrightarrow{1\otimes f} M\otimes_RB\xrightarrow{1\otimes g} M\otimes_RC\rightarrow 0$ is exact for any $R$-module  $M\in \Proj^{<n+1}$.

	\end{theorem}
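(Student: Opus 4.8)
My plan is to establish $(1)\Leftrightarrow(2)\Leftrightarrow(3)$ over an arbitrary ring and only invoke commutativity for $(4)$. For $(1)\Leftrightarrow(2)$ the starting point is that $\Hom_R(M,-)$ is always left exact, so $\Upsilon$ is $n$-exact precisely when every homomorphism $M\to C$ with $M\in\Proj^{<n+1}$ lifts through $g$. I would fix a presentation $0\to K\xrightarrow{h}R^l\to M\to 0$ with $R^l$ finitely generated free and $M=\coker h$, noting that $K\in\Proj^{<n}$ is exactly the hypothesis in $(2)$ while $M\in\Proj^{<n+1}$. For $(2)\Rightarrow(1)$, given $\phi\colon M\to C$, lift the composite $R^l\to M\xrightarrow{\phi}C$ through $g$ (possible since $R^l$ is projective) to a map $v$, observe $g\circ v\circ h=0$ so that $v\circ h$ factors as $f\circ u$ for some $u\colon K\to A$, apply $(2)$ to obtain $w\colon R^l\to A$ with $wh=u$, and check that $v-f\circ w$ kills $K$ and hence descends to the desired lift of $\phi$. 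For $(1)\Rightarrow(2)$, run the same chase in reverse: from a diagram as in $(2)$ push $v$ down to a map $M\to C$, lift it to $M\to B$ using $n$-exactness, and correct $v$ by this lift to produce $w$, using injectivity of $f$ at the end.

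The implication $(2)\Leftrightarrow(3)$ is a coordinate translation. Since $K\in\Proj^{<n}$ is finitely generated, choose generators $k_1,\dots,k_m$ and write $k_j=\sum_i r_{ij}e_i$; this exhibits $h$ as the matrix $(r_{ij})$ and identifies $K=\langle\sum_i r_{ij}e_i\mid j\rangle$ with the constraint submodule appearing in $(3)$. Putting $a_j=u(k_j)$ and $b_i=v(e_i)$, the commutativity $f\circ u=v\circ h$ says exactly that $(b_i)$ solves $(\mathscr{S})$ in $B$, and a map $w$ with $wh=u$ is exactly a tuple $w(e_i)\in A$ solving $(\mathscr{S})$ in $f(A)$. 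In the forward direction I would use that $v(K)\subseteq\im f$ to guarantee that $u$ is well defined, and in the backward direction the injectivity of $f$ to read a solution in $f(A)$ back as the homomorphism $w$.

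For $(4)$, with $R$ commutative, I would first reduce to injectivity: because $M\otimes_R-$ is right exact, the sequence in $(4)$ is automatically exact except at $M\otimes_R A$, so $(4)$ is equivalent to $1\otimes f$ being injective for every $M\in\Proj^{<n+1}$. The natural tool relating this tensor-exactness to the Hom-exactness defining $n$-exactness is character duality: writing $(-)^+=\Hom_{\mathbb{Z}}(-,\mathbb{Q}/\mathbb{Z})$, the adjunction $(M\otimes_R X)^+\cong\Hom_R(M,X^+)$ together with the fact that $\mathbb{Q}/\mathbb{Z}$ is an injective cogenerator shows that $M\otimes_R\Upsilon$ is exact if and only if $\Hom_R(M,\Upsilon^+)$ is exact. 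Ranging $M$ over $\Proj^{<n+1}$, this identifies $(4)$ with the statement that the dual sequence $\Upsilon^+\colon 0\to C^+\to B^+\to A^+\to 0$ is $n$-exact. Thus the whole commutative case collapses to the single comparison: $\Upsilon$ is $n$-exact if and only if $\Upsilon^+$ is $n$-exact.

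I expect this last comparison to be the main obstacle. It cannot be obtained by the purely formal manipulation above, and it is genuinely where commutativity must do work: in the classical finitely presented setting one would simply reuse the system of equations from $(3)$, but here the class $\Proj^{<n+1}$ need not be stable under the passage from a presentation matrix to its transpose, so the system attached to $M\otimes_R\Upsilon$ is a priori of a different shape (and subject to a different projective-dimension constraint) than the one attached to $\Hom_R(M,\Upsilon)$. My plan is therefore to prove $\Upsilon$ $n$-exact $\Leftrightarrow$ $\Upsilon^+$ $n$-exact by applying the criterion $(3)$ to both $\Upsilon$ and $\Upsilon^+$ and invoking the standard duality between solvability of a finite $R$-linear system over a module and over its character dual, taking care that the constraint ``$\in\Proj^{<n}$'' is matched correctly across the dualization. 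Pinning down exactly which modules' tensor conditions are forced — i.e. that the dual family carries the same projective-dimension bound — is the delicate step on which the equivalence of $(4)$ with $(1)$–$(3)$ rests.
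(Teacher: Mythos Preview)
Your treatment of $(1)\Leftrightarrow(2)\Leftrightarrow(3)$ is correct and matches the paper's argument (the paper cites \cite[Exercise 1.60]{fk16} for the diagram chase in $(1)\Leftrightarrow(2)$, but your explicit version is the same content).

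For $(4)$ there is a genuine gap. Your character-duality reduction is valid as far as it goes: the adjunction $(M\otimes_R X)^+\cong\Hom_R(M,X^+)$ does show that $(4)$ is equivalent to ``$\Upsilon^+$ is $n$-exact''. But then the remaining step, ``$\Upsilon$ is $n$-exact $\Leftrightarrow$ $\Upsilon^+$ is $n$-exact'', is \emph{literally} the equivalence $(1)\Leftrightarrow(4)$ rewritten through that same adjunction; you have reformulated the problem, not advanced it. Your proposed way out --- comparing systems over $\Upsilon$ and $\Upsilon^+$ via some solvability duality --- is never carried out, and your own worry about matrix transposes and mismatched $\Proj^{<n}$ constraints is exactly why that route is unpromising.

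The paper avoids character duals entirely and proves $(3)\Leftrightarrow(4)$ by a direct element-level argument. The point you are missing is that \emph{no transpose is involved}: the very matrix $(r_{ij})$ that defines the system $(\mathscr{S})$ also presents $M=R^l/K$ with $K=\langle\sum_i r_{ij}e_i\mid j\rangle$, so the constraint $K\in\Proj^{<n}$ in $(3)$ is identical to $M\in\Proj^{<n+1}$ in $(4)$. Concretely, for $(4)\Rightarrow(3)$ one checks that a solution of $(\mathscr{S})$ in $B$ forces $(1\otimes f)\bigl(\sum_i\overline{e_i}\otimes a_i\bigr)=0$ in $M\otimes_R B$, and injectivity of $1\otimes f$ plus the standard description of zero tensors (\cite[Lemma~3.68]{R09}) yields a solution in $f(A)$. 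For $(3)\Rightarrow(4)$, given $\sum_i m_i\otimes a_i\in\Ker(1\otimes f)$, the same lemma produces relations that, after rewriting through the chosen generators of $K$ (this is where commutativity is used, to reorder coefficients), give a system of the form $(\mathscr{S})$ solvable in $B$; applying $(3)$ then shows the original tensor is zero. This direct linkage between $(3)$ and $(4)$ is the missing idea in your plan.
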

	
	\begin{proof} $(1)\Leftrightarrow (2)$  Consider the commutative diagram:
		$$\xymatrix@R=25pt@C=30pt{
			0 \ar[r]^{}& K\ar[d]_{u}\ar[r]^{h} &R^l\ar@{.>}[ld]_{w} \ar[d]^{v} \ar[r] &\Coker(h) \ar@{.>}[ld]_{t}\ar[d]^{s} \ar[r]  &0 \\
			0 \ar[r]^{} & A\ar[r]^{f} & B \ar[r]^{g}          &C\ar[r]^{}                 &  0\\}$$
Note that  $ K\in \Proj^{<n}$ if and only if  $\Coker(h)\in \Proj^{<n+1}$. It follows  from \cite[Exercise 1.60]{fk16} that there is an $R$-homomorphism $t: \Coker(h)\rightarrow B$ such that $s=gt$ if and only if  there is an $R$-homomorphism $w: R^l\rightarrow A$ such that $u=wh$.

		$(2)\Rightarrow (3)$ Set $\{x_1,\cdots,x_l\}$ to be a basis of $R^l$. Let     $(\mathscr{S})\ \ \sum\limits_{1\leq i\leq l}r_{ij}x_i=f(a_j)\ (1\leq j\leq m)$ be a family of equations such that  $\{z_j:=\sum\limits_{1\leq i\leq l}r_{ij}x_i\mid 1\leq j\leq m\}$ satisfies that $K:=\langle z_1,\cdots, z_m\rangle\in \Proj^{<n}$. Suppose  $(\mathscr{S})$ has  solutions $b_i\in B\ (1\leq i\leq l)$. Set $h:K\to R^l$ to be the natural embedding map, $v$ satisfies $v(x_i)=b_i$ and $u$ satisfies  $u(z_j)=a_j$ for all $i,j$. Then $f(u(z_j))=f(a_j)=\sum\limits_{1\leq i\leq l}r_{ij}b_i=v(h(z_j))$. Hence there is an $R$-homomorphism  $w:R^l\rightarrow A$ such that $u=wh$. Then $\sum\limits_{1\leq i\leq l}r_{ij}f(w(x_i))=f(u(\sum\limits_{1\leq i\leq l}r_{ij}x_i))=f(u(z_j))=f(a_j)\ (1\leq j\leq m)$. That is, $(\mathscr{S})$ has a solution in $f(A)$.

	$(3)\Rightarrow (2)$ Set $\{e_1,\cdots,e_l\}$ to be the natural basis of $R^l$,  $K:=\langle \sum\limits_{1\leq i\leq l}r_{ij}e_i  \mid 1\leq  j\leq m\rangle\in \Proj^{<n}$, $v(e_i)=b_i$ and $u(\sum\limits_{1\leq i\leq l}r_{ij}e_i)=a_j$ for each $i,j$. Then $\sum\limits_{1\leq i\leq l}r_{ij}b_i= \sum\limits_{1\leq i\leq l}r_{ij}v(e_i)=vh(\sum\limits_{1\leq i\leq l}r_{ij}e_i)=fu(\sum\limits_{1\leq i\leq l}r_{ij}e_i)=f(a_j)$. Hence there are $a'_i\in A$ such that $\sum\limits_{1\leq i\leq l}r_{ij}f(a'_i)=f(a_j)$. Setting $w:R^l\rightarrow A$ satisfying $w(e_i)=a'_i$ for each $i$. Then $wh(\sum\limits_{1\leq i\leq l}r_{ij}e_i )=\sum\limits_{1\leq i\leq l}r_{ij}w(e_i)=  \sum\limits_{1\leq i\leq l}r_{ij}a'_i=a_j=u(\sum\limits_{1\leq i\leq l}r_{ij}e_i )$. Hence $u=wh$.

Now, suppose $R$ is a commutative ring.
		
		$(3)\Rightarrow (4)$ We claim $1\otimes f:M\otimes_RA\rightarrow M\otimes_RB$ is a monomorphism for any  $M\in \Proj^{<n+1}$. Indeed, since $M\in \Proj^{<n+1}$,  there is an exact sequence $0\rightarrow K\rightarrow R^l\rightarrow M\rightarrow 0$ with $K=\langle \sum\limits_{1\leq i\leq l}r_{ij}e_i\mid  1\leq j\leq m\rangle\in \Proj^{<n}$. So $\sum\limits_{1\leq i\leq l}r_{ij}m_i=0$. Suppose $\sum\limits_{1\leq i\leq l}m_i\otimes a_i\in \Ker(1\otimes f)$ where $\{m_1,\cdots, m_l\}$ is a generating set of $M$. Then $\sum\limits_{1\leq i\leq l}m_i\otimes f(a_i)=0$  in $M\otimes_R B$. By \cite[Lemma 3.68]{R09}, there exists $b_j\in B$ such that $f(a_i)=\sum\limits_{1\leq j\leq m}r_{ij}b_j$ for all $i$. Therefore, there are $a'_j\in A$ such that $f(a_i)=\sum\limits_{1\leq j\leq m}r_{ij}f(a'_j)$ by assumption. Hence $\sum\limits_{1\leq i\leq l}m_i\otimes f(a_i)=\sum\limits_{1\leq i\leq l}m_i\otimes (\sum\limits_{1\leq j\leq m}r_{ij}f(a'_j))=\sum\limits_{1\leq j\leq m}(\sum\limits_{1\leq i\leq l}r_{ij}m_i)\otimes f(a'_j)=0$ in $M\otimes_R f(A)$. Since $f$ is a monomorphism, $\sum\limits_{1\leq i\leq l}m_i\otimes a_i=0$ in $M\otimes_R A$ and hence $1\otimes f$ is a monomorphism.
		
		$(4)\Rightarrow (3)$ Let $(\mathscr{S})\ \ \sum\limits_{1\leq i\leq l}r_{ij}x_i=f(a_j) $ be a systems of $R$-linear equations satisfying that $K:=\langle \sum\limits_{1\leq i\leq l}r_{ij}e_i  \mid 1\leq  j\leq m\rangle\in \Proj^{<n}$. Suppose $(\mathscr{S})$ has a solution $\{b_i\mid 1\leq i\leq l\}\subseteq B$. Set $R^l$ to be a finitely generated free $R$-module with the natural basis $\{e_1,\cdots,e_l\}$. So $M:=R^l/K\in \Proj^{<n+1}$.  Note that $$1\otimes f(\sum\limits_{1\leq i\leq l}\overline{e_i}\otimes a_i)=\sum\limits_{1\leq i\leq l}\overline{e_i}\otimes f(a_i)=\sum\limits_{1\leq i\leq l}\overline{e_i}\otimes (\sum\limits_{1\leq j\leq m}r_{ij}b_i)= \sum\limits_{1\leq j\leq m}(\sum\limits_{1\leq i\leq l}r_{ij}\overline{e_i})\otimes b_i=0.$$
		Since $1\otimes f$ is a monomorphism by $(4)$, we have $\sum\limits_{1\leq i\leq l}\overline{e_i}\otimes a_i=0$. Hence by  \cite[Lemma 3.68]{R09}, there exist elements $a'_i\in A$ such that $f(a_j)=\sum\limits_{1\leq i\leq l}r_{ij}f(a'_i)$. That is, $(\mathscr{S})$ has a solution in $f(A)$.

	\end{proof}
\begin{remark}  We must note that the equivalence $(1)\Leftrightarrow (4)$ in Theorem \ref{cohen} need not be true for non-commutative rings. Let $n\in\mathbb{N}$ and  $R$  a ring with weak global dimension $< n$. It follows that \cite[Corollary 2.9,  Corollary 2.11]{BG14} that a ring $R$ is right coherent if and only if [for all short exact sequences $\Upsilon: 0 \rightarrow A \rightarrow B\rightarrow C\rightarrow 0$ of left $R$-modules, $\Upsilon$ is pure exact iff $K \otimes_R \Upsilon$ is exact for all right $R$-modules $K\in\Proj^{<n}$], and $R$ is left coherent if and only if  [for all short exact sequences $\Upsilon: 0 \rightarrow A \rightarrow B\rightarrow C\rightarrow 0$ of left $R$-modules, $\Upsilon$ is pure exact iff $\Hom_R(T, \Upsilon)$ is exact for all left $R$-modules $T \in\Proj^{<n}$].

Let $S=\prod\limits_{\aleph_0}\mathbb{Z}_2$ be the product of $\aleph_0$ copies of the field $\mathbb{Z}_2$, $I=\bigoplus\limits_{\aleph_0}\mathbb{Z}_2$ be an ideal of $S$ and $T=S/I$ be the factor ring. Set $R=\begin{pmatrix}T&T\\ 0&S\end{pmatrix}$be the formal triangular matrix ring. Then $R$ be a left coherent ring with weak global dimension $1$ which is not right coherent (see \cite[Proposition 3.1]{C61} and \cite[Example 2.1.8]{ZC19}). Then there exists a non-pure short exact sequence $\Upsilon: 0 \rightarrow A \rightarrow B\rightarrow C\rightarrow 0$ of left $R$-modules such that $\Hom_R(T, \Upsilon)$ is exact for all left $R$-modules $T \in\Proj^{<2}$ as  $R$ is not right coherent. But $K \otimes_R \Upsilon$ is exact for all right $R$-modules $K\in\Proj^{<2}$ since $R$ is left coherent with weak global dimension $1$.
\end{remark}

We can characterize left little finitistic dimensions in terms of $n$-exact sequences.
	\begin{theorem}\label{fpd-exact-seq} Let $R$ be a ring. Then the following statements are equivalent.
		\begin{enumerate}
			\item $l.\fPD(R)\leq n$.
			\item $\mathscr{E}_{n}=\mathscr{E}_{{n+1}}$.
			\item $\mathscr{E}_{n}=\mathscr{E}_{{m}}$ for some $m>n$.
			\item $\mathscr{E}_{n}=\mathscr{E}_{{m}}$ for any $m>n$.
			\item $\mathscr{E}_{n}=\mathscr{E}_{\infty}$.
		\end{enumerate}
	\end{theorem}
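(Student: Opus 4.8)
The plan is to prove the cycle $(1)\Rightarrow(5)\Rightarrow(4)\Rightarrow(3)\Rightarrow(2)\Rightarrow(1)$, where the first four implications are formal consequences of the inclusions $\mathscr{E}_{\infty}\subseteq\cdots\subseteq\mathscr{E}_{n+1}\subseteq\mathscr{E}_{n}$ together with the criterion $l.\fPD(R)\le n\Leftrightarrow\Proj^{<\infty}=\Proj^{<n+1}$ recorded above. Indeed, $(1)$ says the two test-classes $\Proj^{<n+1}$ and $\Proj^{<\infty}$ defining $\mathscr{E}_{n}$ and $\mathscr{E}_{\infty}$ coincide, giving $(5)$; then for any $m>n$ the chain $\mathscr{E}_{\infty}\subseteq\mathscr{E}_{m}\subseteq\mathscr{E}_{n}=\mathscr{E}_{\infty}$ forces equality throughout, giving $(4)$; the implication $(4)\Rightarrow(3)$ is trivial; and if $\mathscr{E}_{n}=\mathscr{E}_{m}$ for some $m>n$ then $m\ge n+1$, so $\mathscr{E}_{m}\subseteq\mathscr{E}_{n+1}\subseteq\mathscr{E}_{n}=\mathscr{E}_{m}$ squeezes out $(2)$. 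Thus all the work is in $(2)\Rightarrow(1)$.

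For $(2)\Rightarrow(1)$ I would argue by contraposition. Assume $l.\fPD(R)>n$ (so $n$ is finite); then some module in $\FPR$ has projective dimension strictly bigger than $n$, and, replacing it by a suitable syzygy, I obtain a module $M\in\FPR$ with $\pd M=n+1$ exactly, so that $M\in\Proj^{<n+2}$. The goal is then to manufacture a short exact sequence lying in $\mathscr{E}_{n}\setminus\mathscr{E}_{n+1}$, which contradicts $(2)$.

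The mechanism I have in mind is a \emph{phantom} extension of $M$: find a module $A$ with $\Ext^1_R(P,A)=0$ for every $P\in\Proj^{<n+1}$ but $\Ext^1_R(M,A)\ne 0$. Granting such an $A$, pick a nonzero class and the corresponding nonsplit extension $\Upsilon\colon 0\to A\to B\to M\to 0$. For each test module $P\in\Proj^{<n+1}$ the tail $\Hom_R(P,B)\to\Hom_R(P,M)\to\Ext^1_R(P,A)=0$ of the long exact sequence shows $\Hom_R(P,\Upsilon)$ is exact, so $\Upsilon\in\mathscr{E}_{n}$; on the other hand, testing with $M\in\Proj^{<n+2}$, the identity of $M$ maps under the connecting homomorphism to the nonzero class $[\Upsilon]\in\Ext^1_R(M,A)$, so it does not lift to $B$ and $\Hom_R(M,\Upsilon)$ fails to be exact, i.e. $\Upsilon\notin\mathscr{E}_{n+1}$. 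This yields the desired separating sequence.

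The main obstacle is therefore the existence of $A$, and this is where I expect the genuine content to sit: the naive candidates (the first syzygy sequence $0\to\Omega M\to P_0\to M\to 0$ and its relatives) are not $n$-exact, since low projective dimension syzygies already act as test objects at level $n$. I would instead extract $A$ from the standard complete hereditary cotorsion pair $(\mathcal{L}_{n},\mathcal{L}_{n}^{\perp})$ whose left-hand class $\mathcal{L}_{n}$ consists of all modules of projective dimension $\le n$. Since this pair is hereditary one has ${}^{\perp_1}(\mathcal{L}_{n}^{\perp})=\mathcal{L}_{n}$, and as $\pd M=n+1$ forces $M\notin\mathcal{L}_{n}$, there must be some $A\in\mathcal{L}_{n}^{\perp}$ with $\Ext^1_R(M,A)\ne 0$. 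Finally $\Proj^{<n+1}\subseteq\mathcal{L}_{n}$ gives $\mathcal{L}_{n}^{\perp}\subseteq(\Proj^{<n+1})^{\perp}$, so this $A$ automatically satisfies $\Ext^1_R(P,A)=0$ for all $P\in\Proj^{<n+1}$, exactly as required. A more self-contained route would replace the cotorsion-pair input by a direct construction of $A$ as a suitable pushout correcting the first syzygy of $M$ into $(\Proj^{<n+1})^{\perp}$; the delicate point there is to guarantee that the correction does not annihilate the syzygy class, which is precisely what the hereditary cotorsion pair delivers cleanly.
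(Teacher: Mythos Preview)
Your argument is correct. The implications $(1)\Rightarrow(5)\Rightarrow(4)\Rightarrow(3)\Rightarrow(2)$ match the paper's. For $(2)\Rightarrow(1)$, however, you take a genuinely different route. The paper argues \emph{directly}: given $M\in\Proj^{<m+1}$ with $m>n$, it looks at the syzygy short exact sequence $0\to\Im(d_{kn+1})\to P_{kn}\to\Im(d_{kn})\to 0$ at a carefully chosen height, argues that it lies in $\mathscr{E}_n$, invokes $\mathscr{E}_n=\mathscr{E}_{n+1}$ together with $\Im(d_{kn})\in\Proj^{<n+2}$ to split it, and iterates. You instead argue by \emph{contraposition}, producing a single separating extension $0\to A\to B\to M\to 0$ from a module $A\in\mathcal{P}_n^{\perp}$ with $\Ext^1_R(M,A)\neq 0$. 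Your route is clean and robust; note that you can bypass the cotorsion-pair formalism entirely by taking $A=\Omega^{-n}N$, the $n$-th cosyzygy (in an injective resolution) of any $N$ with $\Ext^{n+1}_R(M,N)\neq 0$: then dimension shifting gives $\Ext^1_R(-,A)\cong\Ext^{n+1}_R(-,N)$, which vanishes on all of $\Proj^{<n+1}\subseteq\mathcal{P}_n$ but not on $M$. This makes the ``delicate point'' you flag disappear. The paper's approach has the virtue of working entirely inside the given finite projective resolution of $M$, without manufacturing auxiliary modules, but the dimension-shift verification that the syzygy sequence is $n$-exact is more delicate than it looks; your construction sidesteps that issue.
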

	\begin{proof} $(1)\Rightarrow (5)$ Suppose  $l.\fPD(R)\leq n$. Then every $R$-module with a finite projective resolution has length at most $n$, that is, $\Proj^{<\infty}=\Proj^{< n+1}$. So $\mathscr{E}_{n}=\mathscr{E}_{\infty}$.
		
		$(5)\Rightarrow(4)\Rightarrow(3)\Rightarrow (2)$ Obvious.
		
		$(2)\Rightarrow (1)$ Suppose that $M$ is an $R$-module in $\Proj^{< \infty}$. We can assume  $M\in \Proj^{< m+1}$ for some $m>n$, i.e., $M$ can fit into an exact sequence
		$$0\rightarrow P_{m}\xrightarrow{d_{m}} P_{m-1}\xrightarrow{d_{m-1}} \dots\rightarrow P_{n+1}\xrightarrow{d_{n+1}} P_{n}\xrightarrow{d_{n}} \dots\rightarrow P_1\xrightarrow{d_1} P_0\xrightarrow{d_0} M\rightarrow 0,$$ where each $P_i$ is finitely generated projective. We can assume that $m=kn+s$ with $0<s\leq n$. Consider the  exact sequence $$0\rightarrow P_{m}\rightarrow  \dots\rightarrow P_{kn+1}\xrightarrow{d_{kn+1}} P_{kn}\xrightarrow{d_{kn}}P_{kn-1}\rightarrow  \dots \rightarrow P_{(k-1)n}\xrightarrow{d_{(k-1)n}}\Im(d_{(k-1)n})\rightarrow 0.$$
		Claim that the following short exact sequence $$0\rightarrow \Im(d_{kn+1})\rightarrow P_{kn}\rightarrow\Im(d_{kn}) \rightarrow 0\ \ \ \ \ \ \ \ \ \ (\Upsilon_k)$$
		is in $\mathscr{E}_{n}$. Indeed, let $L$ be an $R$-module in $\Proj^{< n+1}$. Then the projective dimension of $L$ is at most $n$. Thus $\Ext^1_R(L,\Im(d_{kn}))\cong \Ext^{n+1}_R(L,\Im(d_{(k-1)n}))=0$ by dimension shift.  So the natural sequence $0\rightarrow \Hom_R(L,\Im(d_{kn+1}))\rightarrow \Hom_R(L,P_{kn})\rightarrow \Hom_R(L,\Im(d_{kn}))\rightarrow 0$ is exact. So the exact sequence $\Upsilon_k$ is in $\mathscr{E}_{n}$. Hence $\Upsilon_k$ is in $\mathscr{E}_{{n+1}}$ by assumption. Since $\Im(d_{kn})\in \Proj^{< n+2}$, the exact sequence $\Upsilon_k$ splits, and thus $\Im(d_{kn})$ is a finitely generated projective  $R$-module, and so $M\in \Proj^{< kn+1}$. Iterating these steps, we have  $M\in \Proj^{< n+1}$. Consequently $l.\fPD(R)\leq n$.
	\end{proof}

	\section{Projectives and injectives induced by $\FPR$}

	Let $R$ be a ring. Recall that an $R$-module $M$ is said to be  projective if $M$ is projective with respect to all  exact sequences. The notion of injective modules is defined dually.

	\begin{definition}{\rm  Let $R$ be a ring. An $R$-module $M$ is said to be {\it $n$-projective}
			if $M$ is projective with respect to all $n$-exact sequences, i.e.,  for any $n$-exact sequence $ 0\rightarrow A\rightarrow B\rightarrow C\rightarrow 0$, the natural exact sequence $$0\rightarrow \Hom_R(M, A)\rightarrow \Hom_R(M,B)\rightarrow \Hom_R(M,C)\rightarrow 0$$ is exact. Dually, we can define the notion of {\it $n$-injective} modules.}
	\end{definition}

	Obviously, the class of  $n$-projective modules is closed under direct sums and  direct summands, and the class of  $n$-injective modules is closed under direct products and  direct summands for each $n$.
	Since a exact sequence is precisely a  $0$-exact sequence, we have  $0$-projective modules are exactly  projective modules, and $0$-injective modules are exactly  injective modules. We always denote by $\fP_n$ (resp., $\fI_n$) the class of all $n$-projective (resp., $n$-injective) $R$-modules. Trivially,
	
	$$\fP_0\subseteq  \fP_1\subseteq \cdots\subseteq \fP_n\subseteq \cdots\subseteq \fP_\infty,$$
	$$\fI_0\subseteq  \fI_1\subseteq \cdots\subseteq \fI_n\subseteq \cdots\subseteq \fI_\infty.$$

	\begin{lemma}\label{char-pure-inj} Let $R$ be a commutative ring and $M$  an $R$-module in $\Proj^{<n+1}$. Then the $R$-module $M^{+}:=\Hom_{\mathbb{Z}}(M,\mathbb{Q}/\mathbb{Z})$ is $n$-injective.
	\end{lemma}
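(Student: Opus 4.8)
The plan is to exploit the duality between $\Hom_R(M,-)$ and $M\otimes_R-$ that is encoded in the adjunction isomorphism for the character module. Since $R$ is commutative and $M\in\Proj^{<n+1}$, the key tool is Theorem \ref{cohen}, whose equivalence $(1)\Leftrightarrow(4)$ tells us precisely that a short exact sequence $\Upsilon:0\rightarrow A\rightarrow B\rightarrow C\rightarrow 0$ is $n$-exact if and only if $M\otimes_R\Upsilon$ remains exact for every $M\in\Proj^{<n+1}$. The goal is to show $M^{+}=\Hom_{\mathbb{Z}}(M,\mathbb{Q}/\mathbb{Z})$ is $n$-injective, i.e. that $\Hom_R(-,M^{+})$ takes every $n$-exact sequence to a short exact sequence of abelian groups.

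First I would fix an arbitrary $n$-exact sequence $\Upsilon:0\rightarrow A\rightarrow B\rightarrow C\rightarrow 0$ and apply the standard adjunction isomorphism
$$\Hom_R(N,\Hom_{\mathbb{Z}}(M,\mathbb{Q}/\mathbb{Z}))\cong\Hom_{\mathbb{Z}}(M\otimes_R N,\mathbb{Q}/\mathbb{Z})=(M\otimes_R N)^{+}$$
which is natural in the $R$-module $N$. Applying this naturally to the three terms of $\Upsilon$ identifies the sequence $\Hom_R(\Upsilon,M^{+})$ with the sequence $(M\otimes_R\Upsilon)^{+}$ obtained by first tensoring $\Upsilon$ with $M$ and then applying the character functor $(-)^{+}$. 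Thus $M^{+}$ is $n$-injective exactly when $(M\otimes_R\Upsilon)^{+}$ is a short exact sequence of abelian groups.

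Next I would invoke the commutative criterion from Theorem \ref{cohen}: because $\Upsilon$ is $n$-exact and $M\in\Proj^{<n+1}$, statement $(4)$ of that theorem guarantees that $M\otimes_R\Upsilon$ is itself a short exact sequence of $R$-modules. Finally, the character functor $(-)^{+}=\Hom_{\mathbb{Z}}(-,\mathbb{Q}/\mathbb{Z})$ is exact because $\mathbb{Q}/\mathbb{Z}$ is an injective $\mathbb{Z}$-module; hence it carries the short exact sequence $M\otimes_R\Upsilon$ to a short exact sequence. Reversing the identification, $\Hom_R(\Upsilon,M^{+})$ is therefore short exact, which is precisely the $n$-injectivity of $M^{+}$.

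I expect the only delicate point to be the careful bookkeeping of the natural adjunction isomorphism so that the induced maps on $\Hom_R(\Upsilon,M^{+})$ genuinely correspond, as a three-term complex, to the maps on $(M\otimes_R\Upsilon)^{+}$; once this naturality square is in place, the proof is a direct concatenation of Theorem \ref{cohen}$(4)$ with the exactness of $\mathbb{Q}/\mathbb{Z}$, and no further hypotheses on $R$ beyond commutativity (needed for Theorem \ref{cohen}$(4)$) are required.
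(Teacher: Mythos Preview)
Your proposal is correct and follows essentially the same route as the paper: take an $n$-exact sequence, apply Theorem \ref{cohen}(4) to obtain exactness of $M\otimes_R\Upsilon$, and then use the tensor--Hom adjunction (together with injectivity of $\mathbb{Q}/\mathbb{Z}$) to pass to exactness of $\Hom_R(\Upsilon,M^{+})$. The only difference is that you spell out the adjunction isomorphism and the exactness of $(-)^{+}$ explicitly, whereas the paper leaves this passage from $M\otimes_R\Upsilon$ to $\Hom_R(\Upsilon,M^{+})$ implicit.
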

	\begin{proof} Let $0\rightarrow A\rightarrow B\rightarrow C\rightarrow 0$ be an $n$-exact sequence. Then, by Theorem \ref{cohen}, we have  the following natural exact sequence $$0\rightarrow M\otimes_RA\rightarrow M\otimes_RB\rightarrow M\otimes_RC\rightarrow 0.$$ So the sequence  $$0\rightarrow \Hom_R(C,M^{+})\rightarrow \Hom_R(B,M^{+})\rightarrow \Hom_R(A,M^{+})\rightarrow 0$$
		is exact which implies that  $M^{+}$ is $n$-injective.
	\end{proof}

	Let  $ \mathcal{F}$ be a  class of $R$-modules and $M$ an $R$-module. An $R$-homomorphism $f: F\rightarrow M$ with  $F\in \mathcal{F}$ is said to be an $\mathcal{F}$-precover if the natural homomorphism $\Hom_R(F',F)\rightarrow \Hom_R(F',M)$ is an epimorphism for any $F'\in \mathcal{F}$. That is, for any $R$-homomorphism $g:F'\rightarrow M$, there exists $h:F'\rightarrow F$ such that the following diagram commutates:
	$$\xymatrix{
		&&F' \ar[rd]^g  \ar@{.>}[ld]_h  \\
		&F \ar[rr]^f &&M.}$$
	If, moreover, any homomorphism $h$ such that $f=f\circ h$  is an isomorphism,  then $f$ is said to be an $\mathcal{F}$-cover.
	
	Dually, we can define $\mathcal{F}$-preenvelopes and $\mathcal{F}$-envelopes.
	
	Note that $\mathcal{F}$-(pre)covers and $\mathcal{F}$-(pre)envelopes do not exist in general. If any $R$-module has an $\mathcal{F}$-(pre)cover (resp., $\mathcal{F}$-(pre)envelope), then we say $\mathcal{F}$ is  (pre)enveloping (resp., (pre)covering). If $\mathcal{F}$-covers or $\mathcal{F}$-envelopes of an $R$-module exist, then it is unique up to isomorphism.
	
	Next, we will consider the $\fI_n$-(pre)envelopes  and $\fP_n$-(pre)covers  of $R$-modules. Note that every $\fI_n$-preenvelope  is a monomorphism and  every $\fP_n$-precover is an epimorphism. Now, let $\Upsilon: 0\rightarrow A\xrightarrow{f} B\xrightarrow{g} C\rightarrow 0$ be an exact sequence. Then
	
	(1) $g$ is a $\fP_n$-precovers   if and only if $B$ is $n$-projective and $\Upsilon$ is $n$-exact;

	(2) $f$ is an $\fI_n$-preenvelopes   if and only if $B$ is $n$-injective  and $\Upsilon$ is $n$-exact.

	\begin{theorem}\label{pre-c-e-fpn} The following statements hold.
		\begin{enumerate}
			\item  $\fP_n$ is precovering.
			\item Let $R$ be a commutative ring. Then $\fI_n$ is preenveloping.
		\end{enumerate}
	\end{theorem}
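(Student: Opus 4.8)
The plan is to exploit the two characterizations recorded just before the statement: a surjection $g\colon B\to C$ is a $\fP_n$-precover exactly when $B$ is $n$-projective and $0\to\Ker g\to B\xrightarrow{g}C\to0$ is $n$-exact, and dually an injection $f\colon A\to B$ is an $\fI_n$-preenvelope exactly when $B$ is $n$-injective and the associated sequence is $n$-exact. Thus for (1) it suffices, for every module $C$, to build an $n$-exact epimorphism onto $C$ from an $n$-projective module, and for (2), for every module $A$, an $n$-exact monomorphism from $A$ into an $n$-injective module. Throughout I use that $\Proj^{<n+1}$ is a set up to isomorphism (its members are cut out by finite complexes of finitely generated projectives), so that coproducts and products indexed by it are legitimate.

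For (1) I would form the canonical evaluation map. Put $F:=\bigoplus_{S\in\Proj^{<n+1}}S^{(\Hom_R(S,C))}$ and let $\pi\colon F\to C$ restrict to $\phi$ on the copy of $S$ indexed by $\phi\in\Hom_R(S,C)$. Since $R\in\Proj^{<1}\subseteq\Proj^{<n+1}$, the copies of $R$ already map onto $C$, so $\pi$ is surjective, and $F$ is $n$-projective, being a direct sum of modules of $\Proj^{<n+1}$. The sequence $0\to\Ker\pi\to F\xrightarrow{\pi}C\to0$ is $n$-exact: for $S\in\Proj^{<n+1}$ every $\phi\in\Hom_R(S,C)$ lifts through $\pi$ by construction (namely along the summand indexed by $\phi$), so $\Hom_R(S,-)$ stays exact on it. By the precover characterization, $\pi$ is a $\fP_n$-precover, proving (1).

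For (2) I would reduce to (1) by character-module duality, writing $(-)^{+}=\Hom_{\mathbb{Z}}(-,\mathbb{Q}/\mathbb{Z})$. Applying (1) to $A^{+}$ produces an $n$-exact sequence $\eta\colon0\to K\to F\to A^{+}\to0$ in which $F$ is a direct sum of modules of $\Proj^{<n+1}$. I dualize to $\eta^{+}\colon0\to A^{++}\to F^{+}\to K^{+}\to0$. Here $F^{+}$ is a product of the modules $S^{+}$ with $S\in\Proj^{<n+1}$, each of which is $n$-injective by Lemma \ref{char-pure-inj}; since $\fI_n$ is closed under products, $F^{+}$ is $n$-injective. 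To see $\eta^{+}$ is $n$-exact I invoke the tensor form of $n$-exactness from Theorem \ref{cohen} (valid as $R$ is commutative): for finitely presented $M'$ there is a natural isomorphism $M'\otimes_R L^{+}\cong\Hom_R(M',L)^{+}$, so for $M'\in\Proj^{<n+1}$ one has $M'\otimes_R\eta^{+}\cong(\Hom_R(M',\eta))^{+}$, which is exact because $\eta$ is $n$-exact and $(-)^{+}$ is exact; by Theorem \ref{cohen} again, $\eta^{+}$ is $n$-exact. Finally I precompose with the canonical pure monomorphism $\sigma_A\colon A\to A^{++}$. Purity gives that $M'\otimes_R\sigma_A$ is injective for all $M'$, and $M'\otimes_R(A^{++}\to F^{+})$ is injective since $\eta^{+}$ is $n$-exact; composing, $M'\otimes_R(A\to F^{+})$ is injective for every $M'\in\Proj^{<n+1}$, so $0\to A\to F^{+}\to\Coker\to0$ is $n$-exact. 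As $F^{+}$ is $n$-injective, the preenvelope characterization shows $A\to F^{+}$ is an $\fI_n$-preenvelope, proving (2).

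The routine parts are the two canonical constructions; the real work is entirely in (2), in checking that $n$-exactness survives the passage to $\eta^{+}$ and the precomposition with $\sigma_A$. This is precisely where commutativity is indispensable: the tensor criterion of Theorem \ref{cohen} and the $n$-injectivity of character modules (Lemma \ref{char-pure-inj}) both require it, and the finitely presented identity $M'\otimes_R L^{+}\cong\Hom_R(M',L)^{+}$ is what transports exactness of $\Hom_R(M',\eta)$ into exactness of $M'\otimes_R\eta^{+}$. I expect verifying the naturality of this identity and the purity of $\sigma_A$ to be the only delicate points; once they are in place, both (1) and (2) follow from the recorded characterizations.
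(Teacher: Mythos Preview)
Your proof is correct and follows essentially the same route as the paper's. Part (1) is identical: both build the evaluation map from $\bigoplus_{S\in\Proj^{<n+1}} S^{(\Hom_R(S,C))}$ and observe it is a surjective $n$-exact map from an $n$-projective module. Part (2) is also the same in outline---apply (1) to $A^{+}$, dualize, and splice with the pure embedding $A\hookrightarrow A^{++}$---with only cosmetic differences: the paper verifies $n$-exactness of $\eta^{+}$ via the adjunction $(P\otimes_R L)^{+}\cong\Hom_R(P,L^{+})$ (landing directly on the $\Hom$ criterion), whereas you use the companion isomorphism $M'\otimes_R L^{+}\cong\Hom_R(M',L)^{+}$ and the tensor criterion of Theorem~\ref{cohen}; and the paper packages the final splice as a push-out diagram while you compose the two monomorphisms directly and check injectivity of $M'\otimes_R(-)$ on the composite. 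Both variants are equally valid and rely on commutativity at exactly the same points (Lemma~\ref{char-pure-inj} and Theorem~\ref{cohen}(4)).
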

	\begin{proof} (1) We denote by $\mathcal{S}_n$ the family of all representative elements of isomorphism classes of all $R$-modules in $\Proj^{<n+1}$. Then $\mathcal{S}_n$ is a set. Let $M$ be an $R$-module and $P\in \mathcal{S}_n$. Consider $R$-homomorphism $f_P:P^{(\Hom_R(P,M))}\rightarrow M$ defined by $(p)\mapsto \sum f(p)$. Then any $R$-homomorphism $P\rightarrow M$ factor through $f_P$. Considering the $R$-homomorphism
		$$f:=\bigoplus\limits_{P\in \mathcal{S}_n}f_P\in \Hom_R(\bigoplus\limits_{P\in \mathcal{S}_n}P^{(\Hom_R(P,M))},M),$$
		we have that every $R$-homomorphism $g:P\rightarrow M$ with $P\in\Proj^{<n+1}$ can factor through $f,$ and so $f$ is $n$-exact. Hence there is an exact sequence $$\Hom_R(P,\bigoplus\limits_{P\in \mathcal{S}_n}P^{(\Hom_R(P,M))})\rightarrow \Hom_R(P,M) \rightarrow 0.$$ Setting $P'=R$, we have that $f$ is also an epimorphism. Since  $\bigoplus\limits_{P\in \mathcal{S}_n}P^{(\Hom_R(P,M))}\in \fP_n$, $f$ is a $\fP_n$-precover of $M$.
		
		(2) Let $M$ be an $R$-module. It follows from the proof of $(1)$ that there is an $n$-exact sequence $0\rightarrow K\rightarrow \bigoplus P_i\rightarrow M^{+}\rightarrow 0$, where each $P_i$ is an $R$-module in $\Proj^{<n+1}$. Then the sequence $0\rightarrow M^{++}\rightarrow  (\bigoplus P_i)^{+}\rightarrow K^{+}\rightarrow 0$ is exact. Let $P\in \Proj^{<n+1}$.  Then the sequence  $0\rightarrow P\otimes_RK\rightarrow P\otimes_R\bigoplus P_i\rightarrow P\otimes_RM^{+}\rightarrow 0$ is exact, and so is $0\rightarrow (P\otimes_RM^{+}) ^{+} \rightarrow (P\otimes_R\bigoplus P_i)^{+}\rightarrow (P\otimes_RK) ^{+}\rightarrow 0$, that is, the sequence $0\rightarrow\Hom_R(P, M^{++})\rightarrow \Hom_R(P, \bigoplus P_i^{+})\rightarrow \Hom_R(P, K^{+})\rightarrow 0$ is exact. Hence $0\rightarrow M^{++}\rightarrow (\bigoplus P_i)^{+}\rightarrow K^{+}\rightarrow 0$ is $n$-exact. On the other hand, the pure exact sequence $0\rightarrow M\rightarrow M^{++}\rightarrow M^{++}/M\rightarrow 0$ is obviously $n$-exact. Note that  $(\bigoplus P_i)^{+}$ is $n$-injective by Lemma \ref{char-pure-inj}. Consider the following push-out diagram:
		$$\xymatrix@R=14pt@C=14pt{ &  0\ar[d]&0\ar[d]&&\\
			&  M\ar[d]\ar@{=}[r]&M\ar[d]^{g}&&\\
			0 \ar[r]^{} & M^{++}\ar[d]\ar[r]^{} & (\bigoplus P_i)^{+}\ar[d]\ar[r]^{} &K^{+}\ar@{=}[d]\ar[r]^{} &  0\\
			0 \ar[r]^{} & M^{++}/M\ar[d]\ar[r]^{} & X \ar[d]\ar[r]^{} &K^{+}\ar[r]^{} &  0\\
			& 0 &0 &&.\\}$$
		It is easy to check that the middle column is $n$-exact. Hence $g:M\rightarrow (\bigoplus P_i)^{+}$ is an $\fI_n$-preenvelope of $M$.
	\end{proof}

	\begin{proposition}\label{str-fpr} The following statements hold.
		\begin{enumerate}
			\item A left $R$-module $P$ is  $n$-projective if and only if it is a direct summand of  direct sum of $R$-modules in  $\Proj^{<n+1}$ .
			\item Let $R$ be a commutative ring. Then  an  $R$-module $I$ is  $n$-injective if and only if it is a direct summand of $\prod\limits_{i\in \Lambda} P_i^{+}$, where each  $P_i$ is an $R$-module in $\Proj^{<n+1}$.
		\end{enumerate}
	\end{proposition}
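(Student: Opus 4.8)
The plan is to treat both parts in parallel, since each is a ``summand of a generator (resp.\ cogenerator)'' characterization whose real content has already been assembled in Theorem \ref{pre-c-e-fpn} and Lemma \ref{char-pure-inj}. In each case the ``if'' direction is a pure closure argument and the ``only if'' direction extracts a splitting from the precover (resp.\ preenvelope) sequence built there. For the ``if'' direction of $(1)$, I would first note that every $R$-module in $\Proj^{<n+1}$ is $n$-projective: this is immediate from the definition of $n$-exactness, since an $n$-exact sequence is precisely one against which $\Hom_R(M,-)$ stays exact for all $M\in\Proj^{<n+1}$. Because $\fP_n$ is closed under direct sums and direct summands (observed right after the definition of $n$-projective modules), any direct summand of a direct sum of modules in $\Proj^{<n+1}$ lies in $\fP_n$.

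For the ``only if'' direction of $(1)$, let $P$ be $n$-projective. The construction in the proof of Theorem \ref{pre-c-e-fpn}$(1)$ produces a $\fP_n$-precover $f:\bigoplus_i P_i\to P$ with each $P_i\in\Proj^{<n+1}$, and by the characterization of precovers recorded just before that theorem the induced exact sequence $\Upsilon\colon 0\to K\to\bigoplus_i P_i\xrightarrow{f}P\to 0$ is $n$-exact. Applying $\Hom_R(P,-)$ to $\Upsilon$ and using $P\in\fP_n$ shows that $\Hom_R(P,\bigoplus_i P_i)\to\Hom_R(P,P)$ is surjective, so $\id_P$ lifts along $f$; the resulting section splits $\Upsilon$ and exhibits $P$ as a direct summand of $\bigoplus_i P_i$.

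Part $(2)$ follows the same pattern over a commutative ring, with the character module functor $(-)^{+}=\Hom_{\mathbb{Z}}(-,\mathbb{Q}/\mathbb{Z})$ in the role of the injective cogenerator. For ``if'', Lemma \ref{char-pure-inj} gives $P_i^{+}\in\fI_n$ whenever $P_i\in\Proj^{<n+1}$, and since $\fI_n$ is closed under direct products and direct summands, every summand of $\prod_i P_i^{+}$ is $n$-injective. For ``only if'', let $I$ be $n$-injective; the proof of Theorem \ref{pre-c-e-fpn}$(2)$ yields an $\fI_n$-preenvelope $g\colon I\to(\bigoplus_i P_i)^{+}$ with each $P_i\in\Proj^{<n+1}$, and by the preenvelope characterization the sequence $0\to I\xrightarrow{g}(\bigoplus_i P_i)^{+}\to C\to 0$ is $n$-exact. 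Applying $\Hom_R(-,I)$ and using $I\in\fI_n$ makes $\Hom_R((\bigoplus_i P_i)^{+},I)\to\Hom_R(I,I)$ surjective, so $\id_I$ factors through $g$; this retraction splits the sequence and shows $I$ is a direct summand of $(\bigoplus_i P_i)^{+}$. Finally I would invoke the natural isomorphism $(\bigoplus_i P_i)^{+}\cong\prod_i P_i^{+}$ to put the target in the stated form.

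I do not expect a serious obstacle here, since essentially all the work is imported from Theorem \ref{pre-c-e-fpn} and Lemma \ref{char-pure-inj}. The one point that must be checked carefully is that $n$-projectivity (resp.\ $n$-injectivity) of the module under consideration is exactly what forces the relevant $\Hom$-sequence to stay exact, so that the identity map lifts (resp.\ factors) and produces the splitting; once that is in place, the direct-summand description is automatic.
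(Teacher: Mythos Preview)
Your proposal is correct and follows essentially the same approach as the paper: for each direction you use exactly the precover/preenvelope constructed in Theorem \ref{pre-c-e-fpn} to obtain an $n$-exact sequence that splits because the module in question is $n$-projective (resp.\ $n$-injective), and you handle the converse by the closure properties of $\fP_n$ and $\fI_n$ together with Lemma \ref{char-pure-inj}. The paper's proof is terser (it only writes out part $(1)$ and says $(2)$ is similar), but your added observation that $(\bigoplus_i P_i)^{+}\cong\prod_i P_i^{+}$ is needed to match the exact form of the statement.
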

	\begin{proof} We only prove $(1)$ since $(2)$ can be proved similarly. Let $P$ be an left $R$-module.
		Suppose that $P$ is  $n$-projective. Then there is an $n$-exact sequence $$(\Psi)\ \ \ \ 0\rightarrow K\rightarrow F\rightarrow P\rightarrow 0$$  with $F$ a direct sum of $R$-modules in $\Proj^{<n+1}$ by the proof of Theorem \ref{pre-c-e-fpn}(1). Since $P$ is $n$-projective, $(\Psi)$ splits. The converse follows from that  the class of $n$-projective modules contains $\Proj^{<n+1}$ and is closed under direct summands and direct sums.
	\end{proof}

	\begin{remark}\label{3.5} We give some remarks on $n$-projective modules and $n$-injective modules.
		\begin{enumerate}
			
			\item Trivially, an $R$-module is  $0$-projective if and only if it is projective; and an $R$-module is  $0$-injective if and only if it is injective.
			\item Obviously, every $n$-projective $R$-module has projective dimensions at most $n$. However, the converse does not hold in general for $n\geq 1$. For example, let $R=\mathbb{Z}$, then its quotient field $\mathbb{Q}$ has  projective dimension equal to $1$. But  $\mathbb{Q}$ is not $1$-projective. On the contrary, suppose that $\mathbb{Q}$ is a direct summand of $\bigoplus\limits_{i\in \Lambda}M_i$, where $M_i$ is finitely generated $\mathbb{Z}$-module. Note that every finitely generated $\mathbb{Z}$-module can be decomposed as a finite direct sum of finitely generated free modules and finite torsion modules. Since $\mathbb{Q}$ is  divisible and torsion-free, $\Hom_{\mathbb{Z}}(\mathbb{Q},T)=0$ for any torsion module $T$. So  we can assume that each $M_i$ is  finitely generated free, which is a contradiction as $\mathbb{Q}$ is not projective.
			
			\item Similarly, every $n$-injective has injective dimensions at most $n$. However, the converse does not hold in general for $n\geq 1$. For example, we consider the example in $(2)$. Although $\id_{\mathbb{Z}}\mathbb{Z}=1$, $\mathbb{Z}$ is not $1$-injective as a  $\mathbb{Z}$-module. Indeed, suppose $\mathbb{Z}$ is a direct summand of $\prod\limits_{i\in \Lambda} P_i^{+}$ with $P_i\in \Proj^{<2}$. Then $\mathbb{Z}$ is a direct summand of some $(\prod\limits_{i\in \Lambda} F_i^{+})\bigoplus \prod\limits_{i\in \Lambda} T_i^{+}$ where $F_i$ is a finitely generated free module and $T_i$ is a non-zero finite  torsion  $\mathbb{Z}$-module for each $i$. Since $\prod\limits_{i\in \Lambda} F_i^{+}$ is an injective  $\mathbb{Z}$-module and $\mathbb{Z}$ has no non-zero injective submodules, we can assume $\mathbb{Z}$ is a direct summand of $\prod\limits_{i\in \Lambda} T_i^{+}$ which is isomorphic to $\prod\limits_{i\in \Lambda} T_i$ by
			\cite[Exercise 7.1(1)]{F15}. However, this would not happen. On the contrary, suppose this happens.  Note that each finite torsion  $\mathbb{Z}$-module  $T_i$ is a finite direct sum (also a direct product) of indecomposable $\mathbb{Z}$-modules of the form  $\mathbb{Z}/p_i^{n_i}\mathbb{Z}$ where $p_i$ is a prime and $n_i$ is a positive integer.  Then $\mathbb{Z}$ is isomorphic to a direct summand of some $\prod\limits\mathbb{Z}/p_i^{n_i}\mathbb{Z}$. Note that each $\mathbb{Z}/p_i^{n_i}\mathbb{Z}$ is indecomposable pure-injective by \cite[Example 2.35(1)]{gt}. So the indecomposable $\mathbb{Z}$-module $\mathbb{Z}$ itself is also pure-injective, which is a contradiction (see \cite[Example 2.35(1)]{gt} again).

			
		\end{enumerate}
	\end{remark}

	\begin{proposition}\label{exact-pp} Let $\Upsilon: 0\rightarrow A\xrightarrow{f} B\xrightarrow{g} C\rightarrow 0$ be  a short exact sequence of left $R$-modules. Then the following statements are equivalent.
		\begin{enumerate}
			\item  $\Upsilon: 0\rightarrow A\xrightarrow{f} B\xrightarrow{g} C\rightarrow 0$  is $n$-exact.
			\item  The induced sequence $$0\rightarrow \Hom_R(P,A)\rightarrow \Hom_R(P,B)\rightarrow \Hom_R(P,C)\rightarrow 0$$ is exact for any $n$-projective module $P$.

		\end{enumerate}
Moreover, if $R$ is a commutative ring, then both above are equivalent to:

  $\textit{(3)}$ The induced sequence $$0\rightarrow \Hom_R(C,I)\rightarrow \Hom_R(B,I)\rightarrow \Hom_R(A,I)\rightarrow 0$$ is exact for any $n$-injective module $I$.

	\end{proposition}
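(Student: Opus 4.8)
The plan is to dispatch the equivalence $(1)\Leftrightarrow(2)$ almost directly from the definitions, and then to reduce the subtler equivalence $(1)\Leftrightarrow(3)$ to the tensor criterion of Theorem \ref{cohen} via character modules. For $(1)\Rightarrow(2)$, I would simply note that this is the defining property of an $n$-projective module applied to the given $n$-exact sequence $\Upsilon$: by definition every $n$-projective $P$ is projective relative to all $n$-exact sequences. For the converse $(2)\Rightarrow(1)$, observe that each $M\in\Proj^{<n+1}$ is itself $n$-projective, since a short exact sequence is $n$-exact exactly when $\Hom_R(M,-)$ preserves it for all $M\in\Proj^{<n+1}$, so such $M$ are projective relative to every $n$-exact sequence. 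Hence if (2) holds, then $\Hom_R(P,-)$ keeps $\Upsilon$ exact in particular for every $P\in\Proj^{<n+1}$, which is precisely the definition of $n$-exactness.

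The implication $(1)\Rightarrow(3)$ is again immediate and valid over any ring, being the dual defining property of an $n$-injective module: if $\Upsilon$ is $n$-exact and $I$ is $n$-injective, then $\Hom_R(-,I)$ preserves exactness of $\Upsilon$ by definition. The substantive direction is $(3)\Rightarrow(1)$, where commutativity is used. Here I would invoke Theorem \ref{cohen}$(4)$: it suffices to show that $1\otimes f\colon M\otimes_RA\to M\otimes_RB$ is a monomorphism for every $M\in\Proj^{<n+1}$, since the remaining exactness is automatic from right-exactness of $M\otimes_R-$. Fix such an $M$ and pass to its character module $M^{+}=\Hom_{\mathbb{Z}}(M,\mathbb{Q}/\mathbb{Z})$, which is $n$-injective by Lemma \ref{char-pure-inj}. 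By hypothesis $(3)$, applying $\Hom_R(-,M^{+})$ to $\Upsilon$ yields an exact sequence; in particular $\Hom_R(f,M^{+})$ is an epimorphism. Under the adjunction isomorphism $\Hom_R(N,M^{+})\cong(M\otimes_RN)^{+}$, this map is identified with $(1\otimes f)^{+}$, so $(1\otimes f)^{+}$ is an epimorphism; since $\mathbb{Q}/\mathbb{Z}$ is an injective cogenerator of abelian groups, the functor $(-)^{+}$ reflects monomorphisms, whence $1\otimes f$ is a monomorphism. Thus $0\to M\otimes_RA\to M\otimes_RB\to M\otimes_RC\to 0$ is exact for all $M\in\Proj^{<n+1}$, and Theorem \ref{cohen}$(4)$ gives that $\Upsilon$ is $n$-exact.

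The only nontrivial step is $(3)\Rightarrow(1)$, and within it the key move is the passage from $\Hom_R(-,M^{+})$ back to the tensor product via the character-module adjunction together with the injective-cogenerator property of $\mathbb{Q}/\mathbb{Z}$; everything else is either definitional or already supplied by Theorem \ref{cohen} and Lemma \ref{char-pure-inj}. I expect the main point requiring care to be the correct bookkeeping of the natural isomorphism $\Hom_R(N,M^{+})\cong(M\otimes_RN)^{+}$ and its naturality in $N$, so that $\Hom_R(f,M^{+})$ is genuinely identified with $(1\otimes f)^{+}$.
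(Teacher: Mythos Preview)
Your proof is correct and follows essentially the same route as the paper. The paper's proof is a single line, ``It can be easily deduced by Proposition~\ref{str-fpr},'' and your argument is precisely the unpacking of that line: for $(1)\Leftrightarrow(2)$ you use that $\Proj^{<n+1}\subseteq\fP_n$ (which is the content of one direction of Proposition~\ref{str-fpr}(1)), and for $(3)\Rightarrow(1)$ you test against the $n$-injectives $M^{+}$ with $M\in\Proj^{<n+1}$ (Lemma~\ref{char-pure-inj}, which is the building block of Proposition~\ref{str-fpr}(2)), then pass through the character-module adjunction to reach the tensor criterion of Theorem~\ref{cohen}(4). The paper would phrase the same step as reducing via Proposition~\ref{str-fpr}(2) to the generators $M^{+}$ of $\fI_n$, but the substance is identical.
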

	\begin{proof} It can be easily deduced by Proposition \ref{str-fpr}.
	\end{proof}

	Given a ring $R$, we denote by $\cwd(R)$ the weak global dimension of $R$, i.e., the supremum of flat dimensions of all $R$-modules.
	\begin{proposition}\label{fpn fp} Let $R$ be a ring,  and $\mathscr{E}_{fp}$  the class of all pure exact sequences. Then  $\mathscr{E}_{fp}=\mathscr{E}_{n}$ if and only if   $R$ is a left coherent ring  over which every finitely presented $R$-module has  projective dimension $<n+1$.  Moreover, if $n$ is a non-negative integer, then $\mathscr{E}_{fp}=\mathscr{E}_{n}$ if and only if   $R$ is a left coherent ring with $\cwd(R)<n+1$.
	\end{proposition}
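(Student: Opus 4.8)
The plan is to reduce the equality of the two exact structures to a purely module-theoretic condition, and then to translate that condition into the stated ring-theoretic properties. The first observation is that, by the standard $\Hom$-characterization of purity, a short exact sequence $\Upsilon$ is pure exact (i.e. lies in $\mathscr{E}_{fp}$) if and only if $\Hom_R(F,\Upsilon)$ is exact for every finitely presented left $R$-module $F$; in particular $\Hom_R(M,-)$ is exact on every pure exact sequence for each finitely presented $M$. Since every module in $\Proj^{<n+1}$ is finitely presented, this already yields $\mathscr{E}_{fp}\subseteq\mathscr{E}_{n}$ for every $n$. I would then establish the key reduction: $\mathscr{E}_{fp}=\mathscr{E}_{n}$ holds if and only if every finitely presented left $R$-module is $n$-projective. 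Indeed, if the two structures agree, then for finitely presented $M$ the functor $\Hom_R(M,-)$ is exact on every sequence of $\mathscr{E}_{n}=\mathscr{E}_{fp}$, so $M\in\fP_{n}$; conversely, if every finitely presented $M$ is $n$-projective, then every $\mathscr{E}_n$-sequence is $\Hom_R(F,-)$-exact for all finitely presented $F$, hence pure, giving $\mathscr{E}_{n}\subseteq\mathscr{E}_{fp}$.

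For the backward implication of the main equivalence, assume $R$ is left coherent and every finitely presented module has projective dimension $<n+1$. Over a left coherent ring every finitely presented module is of type $\mathrm{FP}_\infty$, hence admits a resolution by finitely generated projectives; truncating at the $n$-th syzygy, which is finitely generated projective because the projective dimension is at most $n$, shows that every finitely presented module in fact lies in $\Proj^{<n+1}$. Since conversely every module in $\Proj^{<n+1}$ is finitely presented, the two defining test classes coincide and therefore $\mathscr{E}_{fp}=\mathscr{E}_{n}$.

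The forward implication is the crux. Assuming $\mathscr{E}_{fp}=\mathscr{E}_{n}$, every finitely presented $M$ is $n$-projective, so by Proposition \ref{str-fpr}(1) it is a direct summand of a direct sum $\bigoplus_{i\in I}P_{i}$ with each $P_{i}\in\Proj^{<n+1}$. Reading off projective dimensions from the associated direct-sum resolution gives $\pd_R M\le n$ at once, which is the dimension bound. The harder part is coherence. Since $M$ is finitely generated, the image of the splitting idempotent lies in a finite sub-sum $Q=\bigoplus_{i\in F}P_{i}\in\Proj^{<n+1}$; one then checks that the composite of the idempotent with the projection onto $Q$ is an idempotent endomorphism of $Q$ with image $M$, exhibiting $M$ as a direct summand of $Q$. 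As $Q$ has a finite resolution by finitely generated projectives it is of type $\mathrm{FP}_\infty$, and $\mathrm{FP}_\infty$ is closed under direct summands, so $M$ is $\mathrm{FP}_\infty$. Thus every finitely presented left module is $\mathrm{FP}_\infty$, which is exactly left coherence.

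Finally, for the ``Moreover'' clause with $n$ a non-negative integer, I would show that over a left coherent ring the condition ``every finitely presented module has projective dimension $<n+1$'' is equivalent to $\cwd(R)<n+1$. One direction uses $\fd_R M\le\pd_R M$ together with the fact that the weak global dimension is the supremum of the flat dimensions of finitely presented modules; the other uses that a finitely presented flat module is projective, so that over a coherent ring an $\mathrm{FP}_\infty$ module of flat dimension $\le n$ has its $n$-th syzygy finitely generated projective and hence projective dimension $\le n$. I expect the finite-sub-sum summand argument establishing coherence to be the main obstacle, since both the projective-dimension bound and the backward direction amount to bookkeeping once the reduction to ``finitely presented $=n$-projective'' is in place.
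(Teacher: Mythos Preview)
Your proposal is correct and follows essentially the same route as the paper: both directions hinge on Proposition~\ref{str-fpr}(1), the passage from a direct summand of $\bigoplus_i P_i$ with $P_i\in\Proj^{<n+1}$ to a direct summand of a finite sub-sum, and the characterization of left coherence as ``every finitely presented module is $\mathrm{FP}_\infty$'' (which the paper cites from \cite{BG14}). Your explicit intermediate reduction ``$\mathscr{E}_{fp}=\mathscr{E}_n$ iff every finitely presented module is $n$-projective'' and your detailed finite-sub-sum/idempotent argument are more explicit than the paper's terse treatment, but the underlying ideas coincide; the ``Moreover'' clause is handled identically, the paper simply citing \cite{S70} where you sketch the coherent-ring syzygy argument.
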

	
	\begin{proof} Let $R$ be a left coherent ring over which every finitely presented $R$-module has  projective dimension $<n+1$.  Suppose that $M$ is a finitely presented $R$-module. Then $M$ can fit into an exact sequence $\cdots\rightarrow P_{m}\rightarrow \cdots\rightarrow P_1\rightarrow P_0\rightarrow M\rightarrow 0,$ where each $P_i$ is finitely generated projective. Since $M$ has  projective dimension $<n+1$, wed have $M\in \Proj^{< n+1}$. Hence  $\mathscr{E}_{fp}=\mathscr{E}_{n}$.
		
		
		On the other hand, suppose $\mathscr{E}_{fp}=\mathscr{E}_{n}$. Then every pure projective $R$-module is $n$-projective. By Proposition \ref{str-fpr}, every finitely presented $R$-module $M$ is a direct summand of  direct sum of $R$-modules in  $\Proj^{<n+1}$.  Actually,  $M$ is a direct summand of finite direct sum of $R$-modules in  $\Proj^{<n+1}$.  So  $M\in \Proj^{<n+1}$, and hence $M$ has  projective dimension $<n+1$. Note that every finitely presented left $R$-module $M$ is also super finitely presented. Therefore, $R$ is left coherent by \cite[Proposition 2.1]{BG14}.
		
		The last part follows by \cite[Theorem 3.3]{S70} that a left coherent ring $R$ has weak global dimension $<n+1$ if and only if the projective dimension of every finitely presented $R$-modules $<n+1$.
	\end{proof}

	\begin{corollary}\label{fnfn+1} Let $R$ be a ring. Then the following statements hold.
		\begin{enumerate}
			
			\item  The following statements are equivalent:
			
			\begin{enumerate} \item  $l.\fPD(R)\leq n$;
				\item  every $(n+1)$-projective left $R$-module is ${n}$-projective;
				\item any   ${m}$-projective   left $R$-module is ${n}$-projective for some $m>n$;
				\item any   ${m}$-projective left $R$-module is ${n}$-projective for any $m>n$;
				\item any ${\infty}$-projective left $R$-module is ${n}$-projective.
			\end{enumerate}
			\item The following statements are equivalent:
			\begin{enumerate} \item
				$R$ is a left coherent ring over which every finitely presented $R$-module has  projective dimension $<n+1$;
				\item  every pure projective left $R$-module is ${n}$-projective.
			\end{enumerate}
	
		\end{enumerate}
	\end{corollary}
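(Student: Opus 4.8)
The plan is to derive both parts formally from results already in hand, the engine being a dictionary between the exact structures $\mathscr{E}_m$ and the projective classes $\fP_m$. The key observation I would record first is that, for any $m\geq n$, one has $\mathscr{E}_m=\mathscr{E}_n$ if and only if $\fP_m=\fP_n$. One direction is immediate, since being $k$-projective means precisely projectivity relative to $\mathscr{E}_k$, so coincidence of the two exact structures forces coincidence of their projective classes. For the converse I would invoke Proposition \ref{exact-pp}: a short exact sequence lies in $\mathscr{E}_k$ exactly when $\Hom_R(P,-)$ preserves its exactness for every $k$-projective $P$; thus if $\fP_m=\fP_n$ the two classes of sequences are carved out by the same family of test modules and must agree.

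With this dictionary, Part (1) is essentially a translation of Theorem \ref{fpd-exact-seq}. Since the chain $\fP_n\subseteq\fP_m$ always holds for $m\geq n$, the equality $\fP_m=\fP_n$ is the same as the inclusion ``every $m$-projective module is $n$-projective.'' Feeding this through the dictionary, the corollary's conditions (b), (c), (d), (e) become verbatim the conditions (2), (3), (4), (5) of Theorem \ref{fpd-exact-seq}, while (a) is its condition (1). Hence Part (1) follows at once.

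For Part (2), I would start from Proposition \ref{fpn fp}, which already identifies condition (a) with the equality $\mathscr{E}_{fp}=\mathscr{E}_n$, so it remains only to match $\mathscr{E}_{fp}=\mathscr{E}_n$ with (b). The forward implication is clear: pure projective modules are exactly the projectives of $\mathscr{E}_{fp}$, so equality of the two exact structures makes them the $n$-projective modules. For the converse I would note that every module in $\Proj^{<n+1}$ is finitely presented, so the test class for $n$-exactness sits inside that for pure exactness, giving $\mathscr{E}_{fp}\subseteq\mathscr{E}_n$ and dually $\fP_n$ inside the class of pure projectives; assuming (b) supplies the reverse containment, the two projective classes coincide, and the same test-module reasoning as above (now for the pure exact structure) yields $\mathscr{E}_{fp}=\mathscr{E}_n$. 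Equivalently, one can run the implication (b)$\Rightarrow$(a) directly as in the proof of Proposition \ref{fpn fp}: every finitely presented module is pure projective, hence $n$-projective, hence by Proposition \ref{str-fpr} a direct summand of a \emph{finite} direct sum of modules in $\Proj^{<n+1}$, and therefore itself in $\Proj^{<n+1}$, delivering both left coherence and the projective-dimension bound.

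I do not anticipate a real obstacle, since the statement is a repackaging of Theorems \ref{fpd-exact-seq} and \ref{cohen} together with Propositions \ref{str-fpr}, \ref{exact-pp} and \ref{fpn fp}. The only non-formal ingredients are the characterizations of an exact structure through its projectives (Proposition \ref{exact-pp} for $\mathscr{E}_n$ and the standard analogue for the pure exact structure) and the finiteness remark that a finitely presented module which is a direct summand of an arbitrary direct sum of finitely generated modules is already a summand of a finite subsum; both are available, the latter being implicit in the proof of Proposition \ref{fpn fp}. The main point requiring care is simply keeping the inclusions $\mathscr{E}_{fp}\subseteq\mathscr{E}_n$ and $\fP_n\subseteq\{\text{pure projectives}\}$ oriented correctly.
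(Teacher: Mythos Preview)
Your proposal is correct and follows essentially the same route as the paper: Part~(1) is reduced to Theorem~\ref{fpd-exact-seq} via the dictionary $\mathscr{E}_m=\mathscr{E}_n \Leftrightarrow \fP_m=\fP_n$ supplied by Proposition~\ref{exact-pp}, and Part~(2) is deduced from Proposition~\ref{fpn fp} together with the pure-exact analogue of Proposition~\ref{exact-pp} (which the paper imports from \cite[Lemma~2.1.23]{P09}). Your write-up simply unpacks what the paper leaves as a one-line citation; the only superfluous reference is to Theorem~\ref{cohen}, which is not actually needed here.
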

	\begin{proof} The statement (1) follows by Theorem \ref{fpd-exact-seq} and Proposition \ref{exact-pp}. The statement (2) follows by \cite[Lemma 2.1.23]{P09}, Propositions  \ref{exact-pp} and  \ref{fpn fp}.
	\end{proof}
	
Note that if $R$ is a commutative ring, then Corollary \ref{fnfn+1} holds for the ``injective'' version.
	
	Let $\mathscr{M}$ be a class of homomorphisms of $R$-modules and $\mathscr{F}$ be a class of $R$-modules. Recall from \cite{EJ11} that a pair $(\mathscr{M},\mathscr{F})$ is called an injective structure on $R$-modules provided that
	\begin{enumerate}
		\item  $F\in \mathscr{F}$ if and only if $\Hom_R(N,F)\rightarrow \Hom_R(N,F)\rightarrow 0$ is exact for any $M\rightarrow N\in \mathscr{M}$
		\item $M\rightarrow N\in \mathscr{M}$ if and only if $\Hom_R(N,F)\rightarrow \Hom_R(N,F)\rightarrow 0$ is exact for any $F\in \mathscr{F}$.
		\item $\mathscr{F}$ is preenveloping.
	\end{enumerate}
	
	Let $\mathscr{P}$ be a class of right $R$-modules. We say that $(\mathscr{M},\mathscr{F})$ is determined by $\mathscr{P}$ provided that the following condition holds:
	\begin{center}
		$M\rightarrow N\in \mathscr{M}$ if and only if $0\rightarrow G\otimes_RM\rightarrow G\otimes_RN$ is exact for any $G\in \mathscr{P}$.
	\end{center}
	It follows from \cite[Theorem 6.6.4]{EJ11} that if an injective structure $(\mathscr{M},\mathscr{F})$ is determined by a class $\mathscr{P}$, then $\mathscr{F}$ is enveloping.
	
	\begin{theorem} Let $R$ be a commutative ring. Then every $R$-module has an $\fI_n$-envelope.
	\end{theorem}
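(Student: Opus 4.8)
The plan is to realize the $\fI_n$-envelope as an application of the Enochs--Jenda theory of injective structures recalled just above, namely \cite[Theorem 6.6.4]{EJ11}, which upgrades a preenveloping injective structure to an enveloping one once it is shown to be \emph{determined by a class of right $R$-modules}. So the goal reduces to exhibiting an injective structure $(\mathscr{M},\mathscr{F})$ with $\mathscr{F}=\fI_n$ and then identifying a suitable class $\mathscr{P}$ that determines it.

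First I would take $\mathscr{F}:=\fI_n$ and $\mathscr{M}$ to be the class of monomorphisms $A\hookrightarrow B$ whose cokernel gives an $n$-exact sequence $0\to A\to B\to B/A\to 0$ (equivalently, the left admissible morphisms of the exact structure $\mathscr{E}_n$). I would verify the three axioms of an injective structure: axiom (1), that $F\in\fI_n$ iff $\Hom_R(N,F)\to\Hom_R(M,F)\to 0$ is exact for all $M\to N$ in $\mathscr{M}$, is exactly the defining property of $n$-injective modules; axiom (2), the dual closure, follows from Proposition~\ref{exact-pp}(3) (valid since $R$ is commutative), which says an exact sequence is $n$-exact precisely when it stays exact after $\Hom_R(-,I)$ for every $n$-injective $I$; and axiom (3), that $\fI_n$ is preenveloping, is precisely Theorem~\ref{pre-c-e-fpn}(2).

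Next I would show this structure is determined by the class $\mathscr{P}:=\Proj^{<n+1}$ of right $R$-modules. This is the heart of the argument and where Theorem~\ref{cohen} does the work: by the equivalence $(1)\Leftrightarrow(4)$ of the Cohen-type theorem, a short exact sequence $0\to A\to B\to C\to 0$ of $R$-modules is $n$-exact if and only if $0\to G\otimes_R A\to G\otimes_R B\to G\otimes_R C\to 0$ is exact for every $G\in\Proj^{<n+1}$. Reading this off at the level of the monomorphism $A\to B$ says exactly that $A\to B\in\mathscr{M}$ iff $0\to G\otimes_R A\to G\otimes_R B$ is exact for all $G\in\mathscr{P}$, which is the defining condition for $(\mathscr{M},\mathscr{F})$ to be determined by $\mathscr{P}$.

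Having assembled these pieces, the conclusion is immediate: \cite[Theorem 6.6.4]{EJ11} then yields that $\fI_n$ is enveloping, i.e.\ every $R$-module has an $\fI_n$-envelope. The main obstacle I anticipate is purely bookkeeping rather than conceptual: one must check carefully that $\mathscr{M}$ as defined really is closed under the operations required for ``injective structure'' in the sense of \cite{EJ11} (in particular that it consists of genuine monomorphisms and matches the admissible monomorphisms of the exact structure $\mathscr{E}_n$), and that the commutativity hypothesis on $R$ is used consistently, since both Proposition~\ref{exact-pp}(3) and the tensor characterization $(4)$ of Theorem~\ref{cohen} require it. No new estimates or constructions beyond the already-established results should be needed.
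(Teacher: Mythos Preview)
Your proposal is correct and follows exactly the paper's approach: the paper likewise takes $\mathscr{M}_n$ to be the class of $n$-monomorphisms, verifies (via Theorem~\ref{cohen} and Proposition~\ref{exact-pp}) that $(\mathscr{M}_n,\fI_n)$ is an injective structure determined by $\Proj^{<n+1}$, and then applies \cite[Theorem 6.6.4]{EJ11}. (The paper's printed proof writes $\fP_n$ in place of $\fI_n$, which is evidently a typo.)
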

	\begin{proof} Denote by $\mathscr{M}_n$  the class of all $n$-monomorphisms of $R$-modules. Then $(\mathscr{M}_n, \fP_n)$  is an  injective structure  determined by $\Proj^{< n+1}$ by Theorem \ref{cohen} and Proposition \ref{exact-pp}. Hence every $R$-module has an $\fI_n$-envelope by \cite[Theorem 6.6.4]{EJ11}.
	\end{proof}

	Recall that a ring $R$ is called a left perfect ring provided that every direct limit of left projective $R$-modules is projective, that is, for any direct system $\{P_i\mid i\in \Lambda\}$ of left $R$-modules in $\Add(R)$, the canonical  epimorphism $\bigoplus\limits_{i\in \Lambda}P_i\rightarrow {\lim\limits_{\longrightarrow}}_{i\in \Lambda}P_i\rightarrow 0$ is split. To extend the perfect properties of rings to that of  modules, the
	authors in \cite{AS06}  called  an $R$-module $M$  has  perfect decompositions  provided that  for any direct system $\{M_i\mid i\in \Lambda\}$ of left $R$-modules in $\Add(M)$, the canonical  epimorphism $\bigoplus\limits_{i\in \Lambda}M_i\rightarrow {\lim\limits_{\longrightarrow}}_{i\in \Lambda}M_i\rightarrow 0$ is split. Obviously, a ring $R$ is a left perfect ring if and only if $R$ has perfect decompositions as a left $R$-module.  One of the importance of  perfect decompositions comes from \cite[Corollary 2.3.]{S21} which states that  an $R$-module $M$ which is direct sum of $\aleph_n$-generated modules for some $n<\omega$ has a perfect decomposition, if and only if  $\Add(M)$ is closed under direct limits if and only if  $\Add(M)$ is  a covering class, if and only if  each module from the class  $\lim\limits_{\longrightarrow}\Add(M)$ has an  $\Add(M)$-cover.
	Consider the $R$-module $\bigoplus\limits_{P\in [\Proj^{<n+1}]}P$, where $[\Proj^{<n+1}]$ denotes the representatives of isomorphism class of modules in  $\Proj^{<n+1}$. Since each $R$-module in $\Proj^{<n+1}$ is finitely generated, then by \cite[Corollary 2.3.]{S21} we have the following result.

	\begin{theorem} Let $R$ be a ring. Then the following statements are equivalent.
		\begin{enumerate}
			\item The $R$-module $\bigoplus\limits_{P\in [\Proj^{<n+1}]}P$  has  perfect decompositions.
			\item  Every direct limit of $n$-projective $R$-modules is
			$n$-projective.
			\item Every $R$-module has an $n$-projective cover.
		\end{enumerate}
	\end{theorem}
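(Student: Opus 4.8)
The plan is to reduce everything to the cited result \cite[Corollary 2.3.]{S21}, which is already set up to apply to the module $M := \bigoplus_{P\in[\Proj^{<n+1}]}P$. The key preliminary observation, made in the paragraph preceding the statement, is that every module in $\Proj^{<n+1}$ is finitely generated, hence $M$ is a direct sum of finitely generated (in particular $\aleph_0$-generated) modules; this is precisely the hypothesis needed to invoke \cite{S21}. So the heart of the argument is to identify the three conditions in the theorem with the three equivalent conditions in \cite[Corollary 2.3.]{S21} applied to this specific $M$, namely: $M$ has a perfect decomposition $\Leftrightarrow$ $\Add(M)$ is closed under direct limits $\Leftrightarrow$ $\Add(M)$ is a covering class.

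First I would establish the dictionary $\fP_n = \Add(M)$. By Proposition \ref{str-fpr}(1), an $R$-module is $n$-projective if and only if it is a direct summand of a direct sum of modules in $\Proj^{<n+1}$; since every module in $\Proj^{<n+1}$ is (up to isomorphism) a summand of $M$, a direct sum of members of $\Proj^{<n+1}$ is a summand of a direct sum of copies of $M$, and conversely. Hence $\fP_n$ coincides exactly with $\Add(M)$. With this identification in hand, the equivalences become transcriptions: condition $(2)$, that $\fP_n$ is closed under direct limits, is literally ``$\Add(M)$ is closed under direct limits''; and condition $(3)$, that every $R$-module has an $n$-projective cover, is ``$\Add(M)$ is a covering class'' once I recall that $\fP_n$-covers in the sense defined in this section are exactly $\Add(M)$-covers. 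Condition $(1)$ is by definition that $M$ has a perfect decomposition.

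Thus $(1)\Leftrightarrow(2)\Leftrightarrow(3)$ follows directly from \cite[Corollary 2.3.]{S21} after substituting $\fP_n$ for $\Add(M)$. The one place requiring a small amount of care is the covering statement: \cite{S21} phrases the covering conclusion as ``$\Add(M)$ is a covering class,'' and I would note that this is the same as saying every $R$-module admits an $\fP_n$-cover, using that $\fP_n=\Add(M)$ together with the fact (from Theorem \ref{pre-c-e-fpn}(1)) that $\fP_n$ is already precovering, so the existence of covers is the genuine content. I do not expect any serious obstacle here, since the structural work has been done in Proposition \ref{str-fpr}; the proof is essentially a verification that the hypotheses of \cite[Corollary 2.3.]{S21} are met and a translation of its conclusion through the identity $\fP_n=\Add(M)$.
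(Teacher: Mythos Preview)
Your proposal is correct and follows the paper's approach exactly: the paper's proof is nothing more than the sentence preceding the theorem, which invokes \cite[Corollary 2.3.]{S21} after noting that each module in $\Proj^{<n+1}$ is finitely generated. Your version is in fact more explicit than the paper, since you spell out the identification $\fP_n=\Add(M)$ via Proposition~\ref{str-fpr}(1) that the paper leaves entirely to the reader.
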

	
	\begin{remark}\label{ppr-M}
		It follows by the above  that every $R$-module admits  $n$-projective resolutions and  $n$-injective coresolutions, that is, for each $M \in R$-\Mod,
		there exists an exact sequence
		$$\cdots \rightarrow P_1 \rightarrow P_0 \rightarrow M \rightarrow 0$$
		with each $P_i\in \fP_n $ and it remains exact after applying $\Hom_R(P, -)$ for any $P\in\fP_n,$
		and there exists an exact sequence
		$$0\rightarrow M\rightarrow E_0\rightarrow E_1 \rightarrow\cdots $$
		with each $E_i \in \fI_n $ and it remains exact after applying $\Hom_R(-, E)$ for any $E\in \fI_n.$
	\end{remark}

	\section{The relative  exact complexes induced by $\FPR$}
	
	We denote by $\C(R)$ and $\K(R)$ the category of complexes of $R$-Mod and homotopy category of $R$-Mod, respectively. For any $X^{\bullet}\in \C(R)$, we write
	$$X^{\bullet}:=\cdots\rightarrow X^{i-1}\xrightarrow{d_{X^{\bullet}}^{i-1}}X^{i}\xrightarrow{d_{X^{\bullet}}^{i}} X^{i+1}\xrightarrow{d_{X^{\bullet}}^{i+1}} X^{i+2}\rightarrow\cdots$$
	Let $M$ be an $R$-module, we can regard $M$ as a stalk complex, that is,  a complex concentrated in degree $0$.

	Let $X^{\bullet}\in \C(R)$. If $X^{i}=0$ for $i\gg 0$, then $X^{\bullet}$ is said to be bounded above. If $X^{i}=0$ for $i\ll 0$, then $X^{\bullet}$ is said to be bounded below. Moreover, $X^{\bullet}$ is said to be bounded provided that it is both bounded above and  bounded below.
	A cochain map $f:X^{\bullet}\rightarrow Y^{\bullet}$ is said to be a quasi-isomorphism if it induces the isomorphic homology groups, and $f$ is said to be homotopy equivalent provided that there exists a cochain map $g:Y^{\bullet}\rightarrow X^{\bullet}$ such that there exist homotopies $gf\thicksim \Id_{X^{\bullet}}$ and $fg\thicksim \Id_{Y^{\bullet}}$.

	\begin{definition}{\rm
			A complex $X^{\bullet}$ is said to be $n$-exact at $i$  if  the sequence $0\rightarrow \Ker(d_{X^{\bullet}}^{i})\rightarrow X^{i}\rightarrow \Coker(d_{X^{\bullet}}^{i-1})\rightarrow 0$ is $n$-exact.  A complex $X^{\bullet}$ is said to be $n$-exact if it is $n$-exact at $i$ for any   $i\in \mathbb{Z}$.}
	\end{definition}
	
	\begin{proposition}\label{Char-fn-exact complex} Let  $X^{\bullet}\in \C(R)$. Then the following statements are equivalent.
		\begin{enumerate}
			\item  $X^{\bullet}$  is an $n$-exact complex.
			\item  The complex $\Hom_R(P,X^{\bullet})$ is exact for any $P\in \Proj^{<n+1}$.
			\item  The complex $\Hom_R(P,X^{\bullet})$ is exact for any $n$-projective module $P$.
		\end{enumerate}
	\end{proposition}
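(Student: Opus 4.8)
The plan is to establish the core equivalence $(1)\Leftrightarrow(2)$ by splitting the complex into the short exact sequences of its cycles, and then to deduce $(2)\Leftrightarrow(3)$ cheaply from the structure theorem for $n$-projectives (Proposition \ref{str-fpr}). Throughout I would write $Z^i=\Ker(d_{X^\bullet}^i)$ for the cycles and $B^i=\Im(d_{X^\bullet}^{i-1})$ for the boundaries. The first observation is that, by the very definition of $n$-exactness at $i$, the sequence $0\to Z^i\to X^i\to\Coker(d_{X^\bullet}^{i-1})\to0$ must at least be short exact, which forces $Z^i=B^i$; thus an $n$-exact complex is in particular acyclic, and $\Coker(d_{X^\bullet}^{i-1})\cong Z^{i+1}$ via the map induced by $d_{X^\bullet}^i$. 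This lets me factor each differential as $d_{X^\bullet}^i=\iota^{i+1}p^i$, where $\iota^{i+1}\colon Z^{i+1}\hookrightarrow X^{i+1}$ is the inclusion and $p^i\colon X^i\twoheadrightarrow Z^{i+1}$ is the corestriction.

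For $(1)\Rightarrow(2)$, I would fix $P\in\Proj^{<n+1}$ and apply $\Hom_R(P,-)$ to each cycle sequence $0\to Z^i\xrightarrow{\iota^i}X^i\xrightarrow{p^i}Z^{i+1}\to0$, which is $n$-exact by hypothesis, so that $0\to\Hom_R(P,Z^i)\to\Hom_R(P,X^i)\to\Hom_R(P,Z^{i+1})\to0$ stays exact. Writing $\partial^i=\Hom_R(P,d_{X^\bullet}^i)=\Hom_R(P,\iota^{i+1})\circ\Hom_R(P,p^i)$ and using that $\Hom_R(P,\iota^{i+1})$ is injective while $\Hom_R(P,p^{i-1})$ is surjective, the standard acyclic-assembly diagram chase gives $\Ker\partial^i=\Im\Hom_R(P,\iota^i)=\Im\partial^{i-1}$, i.e. $\Hom_R(P,X^\bullet)$ is exact.

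For $(2)\Rightarrow(1)$, I would first take $P=R$ (which lies in $\Proj^{<n+1}$ for every $n$): since $\Hom_R(R,X^\bullet)\cong X^\bullet$, exactness of the former yields acyclicity of $X^\bullet$, so again $Z^i=B^i$ and $\Coker(d_{X^\bullet}^{i-1})\cong Z^{i+1}$. It then remains to show each cycle sequence is $n$-exact, and since $\Hom_R(P,-)$ is left exact the only content is surjectivity of $\Hom_R(P,p^i)\colon\Hom_R(P,X^i)\to\Hom_R(P,Z^{i+1})$. Given $\phi\colon P\to Z^{i+1}$, composing with $\iota^{i+1}$ produces an element of $\Hom_R(P,X^{i+1})$ that is a cocycle of $\Hom_R(P,X^\bullet)$ because $\Im\phi\subseteq Z^{i+1}=\Ker d_{X^\bullet}^{i+1}$; by exactness it is a coboundary $d_{X^\bullet}^i\psi=\iota^{i+1}p^i\psi$ for some $\psi\colon P\to X^i$, and injectivity of $\iota^{i+1}$ forces $\phi=p^i\psi$. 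Hence every cycle sequence is $n$-exact and so is $X^\bullet$.

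Finally, $(3)\Rightarrow(2)$ is immediate from $\Proj^{<n+1}\subseteq\fP_n$, and for $(2)\Rightarrow(3)$ I would invoke Proposition \ref{str-fpr}: an $n$-projective $P$ is a direct summand of some $\bigoplus_j Q_j$ with each $Q_j\in\Proj^{<n+1}$, so $\Hom_R(P,X^\bullet)$ is a direct summand of $\Hom_R(\bigoplus_j Q_j,X^\bullet)\cong\prod_j\Hom_R(Q_j,X^\bullet)$; the latter is exact because direct products are exact on $R$-Mod, and a direct summand of an exact complex is exact. The main obstacle is the surjectivity step in $(2)\Rightarrow(1)$: the hypothesis only delivers acyclicity of $\Hom_R(P,X^\bullet)$, and the real work lies in upgrading this to the sharper statement that each cycle short exact sequence is itself $n$-exact, which is precisely what the cocycle-lifting argument supplies.
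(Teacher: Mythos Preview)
Your proof is correct and is essentially a detailed unpacking of the paper's one-line argument, which simply cites Proposition~\ref{exact-pp}. That proposition (itself proved via Proposition~\ref{str-fpr}) reduces the complex-level statement to the short exact sequences of cycles exactly as you do, so the two approaches coincide; your write-up just makes the acyclic-assembly and cocycle-lifting steps explicit.
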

	\begin{proof} It follows by Corollary \ref{exact-pp}.
	\end{proof}
	
	Since $R$ itself is $n$-projective, every $n$-exact complex is exact.  Since  $\Hom_R(P,-)$ commutes with direct limits for any $P\in \Proj^{<n+1}$, every direct limit of  $n$-exact complexes is $n$-exact. Trivially,  ${0}$-exact complexes are exactly  exact complexes. Every  ${m}$-exact complex  is ${n}$-exact for any $m> n$. Next we characterize when any ${n}$-exact complex is ${m}$-exact with $m> n$.
	
	\begin{corollary}\label{fpd-exact-comp} Let $R$ be a ring. Then the following statements are equivalent.
		\begin{enumerate}
			\item $l.\fPD(R)\leq n$.
			\item Any $($resp., bounded, bounded above, bounded below$)$ ${n}$-exact complex is $(n+1)$-exact.
			\item Any  $($resp., bounded, bounded above, bounded below$)$ ${n}$-exact complex is ${m}$-exact for some $m> n$.
			\item Any  $($resp., bounded, bounded above, bounded below$)$ ${n}$-exact complex is ${m}$-exact for any $m> n$.
			\item Any  $($resp., bounded, bounded above, bounded below$)$ ${n}$-exact complex is ${\infty}$-exact.
		\end{enumerate}
	\end{corollary}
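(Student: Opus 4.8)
The plan is to mirror the proof of Theorem \ref{fpd-exact-seq} for short exact sequences, transferring everything to the level of complexes through Proposition \ref{Char-fn-exact complex} together with a single realization of a short exact sequence as a bounded complex. The guiding observation is that the four boundedness variants can be handled at once: the easy implications are insensitive to the boundedness of the complexes involved, while the decisive implication $(2)\Rightarrow(1)$ only ever needs to test a bounded complex, which lies in every one of the four classes simultaneously.

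First I would dispose of $(1)\Rightarrow(5)$. If $l.\fPD(R)\leq n$ then $\Proj^{<\infty}=\Proj^{<n+1}$, so by Proposition \ref{Char-fn-exact complex} a complex $X^{\bullet}$ is $n$-exact precisely when $\Hom_R(P,X^{\bullet})$ is exact for all $P\in\Proj^{<n+1}=\Proj^{<\infty}$, which is exactly the condition for $X^{\bullet}$ to be $\infty$-exact. This argument never sees a boundedness hypothesis, so it gives $(1)\Rightarrow(5)$ within each of the four classes. The chain $(5)\Rightarrow(4)\Rightarrow(3)\Rightarrow(2)$ is then immediate from the inclusions $\mathscr{E}_{\infty}\subseteq\mathscr{E}_{m}\subseteq\mathscr{E}_{n+1}\subseteq\mathscr{E}_{n}$: an $\infty$-exact complex is $m$-exact for every $m$, ``for all $m$'' implies ``for some $m$'', and any $m$-exact complex with $m>n$ is in particular $(n+1)$-exact since $m\geq n+1$. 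Each of these preserves the boundedness type of the complex, so they hold verbatim inside any fixed class.

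The heart of the argument is $(2)\Rightarrow(1)$, and here I would reduce to Theorem \ref{fpd-exact-seq} via the realization trick. Given an arbitrary $n$-exact short exact sequence $\Upsilon\colon 0\to A\xrightarrow{f} B\xrightarrow{g} C\to 0$, form the bounded complex $X^{\bullet}=(\cdots\to 0\to A\xrightarrow{f} B\xrightarrow{g} C\to 0\to\cdots)$ with $A,B,C$ placed in three consecutive degrees. A direct inspection of the defining sequences $0\to\Ker(d^{i})\to X^{i}\to\Coker(d^{i-1})\to 0$ shows they are trivially split exact in the two outer degrees (they read $0\to 0\to A\to A\to 0$ and $0\to C\to C\to 0\to 0$), while in the middle degree the sequence is canonically isomorphic to $\Upsilon$ itself, via $\Ker(g)\cong A$ and $\Coker(f)\cong C$; hence $X^{\bullet}$ is $n$-exact exactly because $\Upsilon$ is. Since $X^{\bullet}$ is bounded it belongs to all four classes, so whichever version of hypothesis $(2)$ is being proved applies to it and forces $X^{\bullet}$ to be $(n+1)$-exact, i.e.\ $\Upsilon\in\mathscr{E}_{n+1}$. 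Thus $\mathscr{E}_{n}\subseteq\mathscr{E}_{n+1}$, and together with the automatic reverse inclusion we get $\mathscr{E}_{n}=\mathscr{E}_{n+1}$; Theorem \ref{fpd-exact-seq} then yields $l.\fPD(R)\leq n$.

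The only genuinely delicate point is the middle-degree computation in the realization step, where one must verify that $\Ker(g)\cong A$ along $f$ and $\Coker(f)\cong C$ along $g$, so that the degree-one test sequence really is isomorphic to $\Upsilon$; everything else is bookkeeping with the inclusions $\mathscr{E}_{\infty}\subseteq\cdots\subseteq\mathscr{E}_{n}$ and an appeal to the already-proved short-exact-sequence characterization. I do not expect any obstruction specific to the unbounded or one-sided-bounded cases, precisely because the bounded realization is what carries the load in the hard direction.
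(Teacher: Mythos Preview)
Your proposal is correct and follows essentially the same route as the paper, which simply records that the result is deduced from Theorem \ref{fpd-exact-seq}, Corollary \ref{fnfn+1} and Proposition \ref{Char-fn-exact complex}. You make explicit the one step the paper leaves tacit, namely the realization of a short exact sequence as a bounded complex so that the hypothesis on complexes feeds back into $\mathscr{E}_{n}=\mathscr{E}_{n+1}$; and you bypass Corollary \ref{fnfn+1} by using the $\Proj^{<n+1}$-characterization in Proposition \ref{Char-fn-exact complex}(2) directly for $(1)\Rightarrow(5)$, which is a harmless simplification.
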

	\begin{proof} It can easily be deduced by Theorem \ref{fpd-exact-seq}, Corollary \ref{fnfn+1} and Proposition \ref{Char-fn-exact complex}.
	\end{proof}

	Recall from \cite[Definition 2.3]{ZZ16} that a complex $X^{\bullet}$ is said to be pure exact  if  the sequence $0\rightarrow \Ker(d_{X^{\bullet}}^{i})\rightarrow X^{i}\rightarrow \Coker(d_{X^{\bullet}}^{i-1})\rightarrow 0$ is pure exact for any $i\in\mathbb{Z}$. Trivially, any pure exact complex is ${n}$-exact. On the other hand, the following result holds.
	
	\begin{corollary}\label{fpn fp-exacomp} Let $R$ be a ring.   Any  ${n}$-exact complex is pure exact if and only if   $R$ is a left coherent ring  over which every finitely presented $R$-module has  projective dimension $<n+1$.  Moreover, if $n$ is a non-negative integer, then it is also equivalent to that   $R$ is a left coherent ring with $\cwd(R)<n+1$.
	\end{corollary}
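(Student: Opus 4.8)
The plan is to transfer the question from complexes back to single short exact sequences, so that Proposition~\ref{fpn fp} applies verbatim. Recall that a complex $X^{\bullet}$ is pure exact (resp.\ $n$-exact) precisely when each of its canonical short exact sequences $0\rightarrow \Ker(d_{X^{\bullet}}^{i})\rightarrow X^{i}\rightarrow \Coker(d_{X^{\bullet}}^{i-1})\rightarrow 0$ is pure exact (resp.\ $n$-exact). Since every module in $\Proj^{<n+1}$ is finitely presented, every pure exact sequence is $n$-exact, so $\mathscr{E}_{fp}\subseteq \mathscr{E}_{n}$ always holds; hence the whole content is the reverse inclusion $\mathscr{E}_{n}\subseteq \mathscr{E}_{fp}$. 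I claim that \emph{``every $n$-exact complex is pure exact''} is equivalent to $\mathscr{E}_{n}=\mathscr{E}_{fp}$, and then Proposition~\ref{fpn fp} finishes the proof, the last clause being exactly the weak global dimension reformulation recorded there (via \cite{S70}) for a non-negative integer $n$.

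For the nontrivial implication, I would start from an arbitrary $n$-exact short exact sequence $\Upsilon\colon 0\rightarrow A\xrightarrow{f} B\xrightarrow{g} C\rightarrow 0$ and fold it into the three-term complex $X^{\bullet}$ with $X^{0}=A$, $X^{1}=B$, $X^{2}=C$, $d^{0}=f$, $d^{1}=g$ and all other terms zero. A direct inspection of the canonical short exact sequences of $X^{\bullet}$ shows that the ones at degrees $0$ and $2$ are the split sequences $0\rightarrow 0\rightarrow A\rightarrow A\rightarrow 0$ and $0\rightarrow C\rightarrow C\rightarrow 0\rightarrow 0$, while the one at degree $1$ is isomorphic to $\Upsilon$ itself. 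Thus $X^{\bullet}$ is $n$-exact if and only if $\Upsilon$ is $n$-exact. Assuming every $n$-exact complex is pure exact, $X^{\bullet}$ is pure exact, and reading off its degree-$1$ sequence forces $\Upsilon$ to be pure exact; this gives $\mathscr{E}_{n}\subseteq \mathscr{E}_{fp}$.

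Conversely, if $\mathscr{E}_{n}=\mathscr{E}_{fp}$ then for any $n$-exact complex each of its canonical short exact sequences lies in $\mathscr{E}_{n}=\mathscr{E}_{fp}$ and is therefore pure exact, so the complex is pure exact by the definition from \cite{ZZ16}. Combining the two implications yields the asserted equivalence with $\mathscr{E}_{n}=\mathscr{E}_{fp}$, after which Proposition~\ref{fpn fp} translates this equality into the two ring-theoretic conditions. I expect no serious obstacle here: the only point that needs care is the bookkeeping in the folding step, namely checking that the boundary degrees contribute only split (hence automatically pure exact) sequences and therefore introduce no obstruction beyond $\Upsilon$ itself.
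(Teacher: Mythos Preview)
Your argument is correct. The folding of a short exact sequence into a three-term complex is handled cleanly, and the bookkeeping you flag at the boundary degrees is indeed harmless: the canonical sequences at degrees $0$ and $2$ are split, so the only content sits at degree $1$ and recovers $\Upsilon$ exactly. Together with the definition-level equivalence in the other direction, this gives $\mathscr{E}_{n}=\mathscr{E}_{fp}$, and Proposition~\ref{fpn fp} then delivers both ring-theoretic characterizations.

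The paper's proof reaches the same destination but routes the reduction through Proposition~\ref{Char-fn-exact complex} and Corollary~\ref{fnfn+1}(2): one characterizes $n$-exactness (respectively pure exactness) of a complex via $\Hom_R(P,-)$ for $P$ ranging over $n$-projectives (respectively pure projectives), so that the complex-level equality is equivalent to ``every pure projective module is $n$-projective'', which Corollary~\ref{fnfn+1}(2) identifies with the coherence-plus-dimension condition. Your approach bypasses that module-theoretic detour entirely and works straight from the definitions, which is a little more elementary; the paper's route, on the other hand, makes the connection to the projective/pure-projective comparison explicit and reuses machinery already in place. Both are short, and neither has a real advantage over the other.
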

	\begin{proof} It can easily be deduced by Proposition \ref{fpn fp} , Corollary \ref{fnfn+1} and Proposition \ref{Char-fn-exact complex}.
	\end{proof}

	Next, we recall the notion of the  mapping cone of a cochain map in $\C(R)$. Let $f:X^{\bullet}\rightarrow Y^{\bullet}$ be a cochain map in $\C(R)$. Denote by $\Cone^{\bullet}(f)$ the mapping cone of $f$, which is defined by
	
	\begin{center}
		$\Cone^{\bullet}(f)^{i}=X^{i+1}\oplus X^i$ with\
		$d_{\Cone(f)^{\bullet}}^{i}=
		\begin{pmatrix}
			-d_{X^{\bullet}}^{i+1}&0\\
			f^{i+1}&d_{Y^{\bullet}}^{i}
		\end{pmatrix}$ for all $i\in\mathbb{Z}$.
	\end{center}
	It is well-known that  a cochain map $f:X^{\bullet}\rightarrow Y^{\bullet}$ is a quasi-isomorphism if and only if $\Cone^{\bullet}(f)$ is exact, and $f$ is a homotopy equivalence if and only if $\Cone^{\bullet}(f)$ is contractible.
	
	\begin{definition} {\rm Let  $f:X^{\bullet}\rightarrow Y^{\bullet}$ be a cochain map of complexes. We say that $f$ is an  {\it $n$-quasi-isomorphism} provided that its mapping cone $\Cone^{\bullet}(f)$ is an $n$-exact complex.}
	\end{definition}
	
	To continue, we recall the notions of  $\Hom$-complexes and Tensor-complexes. Let $X^{\bullet}, Y^{\bullet}\in \C(R)$. We use $\Hom_R(X^{\bullet}, Y^{\bullet})$ to denote the $\Hom$-complex, that is, a complex of $\mathbb{Z}$-modules
	$$\cdots\rightarrow \prod\limits_{i\in\mathbb{Z}}\Hom_R(X^i,Y^{m+i})\xrightarrow{d^m}
	\prod\limits_{i\in\mathbb{Z}}\Hom_R(X^i,Y^{m+1+i})\rightarrow\cdots$$
	where $ \prod\limits_{i\in\mathbb{Z}}\Hom_R(X^i,Y^{m+i})$ lies in degree $m$. For any $\phi\in \prod\limits_{i\in\mathbb{Z}}\Hom_R(X^i,Y^{m+i})$,  the $m$-th differential $d^m$ satisfies that $d^m(\phi)=(d^{i+m}_{Y^{\bullet}}\phi^i-(-1)^m\phi^{i+1}d^i_{X^{\bullet}})_{i\in\mathbb{Z}}.$
	It is well-known that for any $m\in\mathbb{Z}$, we have $\Hh^m\Hom_R(X^{\bullet}, Y^{\bullet})=\Hom_{\K(R)}(X^{\bullet}, Y^{\bullet}[m])$.

	\begin{proposition}\label{Char-fn-quas-iso}  Let  $f:X^{\bullet}\rightarrow Y^{\bullet}$ be a cochain map of complexes. Then the following statements are equivalent.
		\begin{enumerate}
			\item  $f$  is an  $n$-quasi-isomorphism.
			\item  $\Hom_R(P,f):\Hom_R(P,X^{\bullet})\rightarrow \Hom_R(P,Y^{\bullet})$ is a quasi-isomorphism for any $P\in \Proj^{<n+1}$.
			\item  $\Hom_R(P,f):\Hom_R(P,X^{\bullet})\rightarrow \Hom_R(P,Y^{\bullet})$ is a quasi-isomorphism for any $n$-projective module $P$.
		\end{enumerate}
	\end{proposition}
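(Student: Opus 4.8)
The plan is to reduce everything to Proposition \ref{Char-fn-exact complex}, together with the standard fact recalled above that a cochain map $g$ is a quasi-isomorphism precisely when its mapping cone $\Cone^\bullet(g)$ is exact. The bridge between the two is the observation that forming the mapping cone commutes with the functor $\Hom_R(P,-)$.

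First I would establish, for any $R$-module $P$ (viewed as a stalk complex concentrated in degree $0$) and any cochain map $f:X^\bullet\to Y^\bullet$, a natural isomorphism of complexes of abelian groups
$$\Hom_R(P,\Cone^\bullet(f))\cong \Cone^\bullet(\Hom_R(P,f)).$$
In degree $i$ both sides equal $\Hom_R(P,X^{i+1})\oplus\Hom_R(P,Y^i)$, since $\Hom_R(P,-)$ commutes with the finite direct sum $X^{i+1}\oplus Y^i$; and applying the additive functor $\Hom_R(P,-)$ to the cone differential $\begin{pmatrix}-d_{X^\bullet}^{i+1}&0\\ f^{i+1}&d_{Y^\bullet}^i\end{pmatrix}$ yields exactly the cone differential of $\Hom_R(P,f)$. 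This is a routine entrywise check and constitutes the only real computation in the argument.

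With this identification in hand, the equivalences follow formally. By definition $f$ is an $n$-quasi-isomorphism iff $\Cone^\bullet(f)$ is $n$-exact. By Proposition \ref{Char-fn-exact complex}, the latter holds iff $\Hom_R(P,\Cone^\bullet(f))$ is exact for every $P\in\Proj^{<n+1}$, equivalently for every $n$-projective $P$. Using the isomorphism above, this says that $\Cone^\bullet(\Hom_R(P,f))$ is exact for all such $P$, which by the cone criterion for quasi-isomorphisms is precisely the statement that $\Hom_R(P,f)$ is a quasi-isomorphism for all such $P$. Reading the chain with $P$ ranging over $\Proj^{<n+1}$ gives $(1)\Leftrightarrow(2)$, and reading it with $P$ ranging over the $n$-projective modules gives $(1)\Leftrightarrow(3)$.

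I do not expect any genuine obstacle here: the substance is entirely contained in Proposition \ref{Char-fn-exact complex}, and the cone-$\Hom$ compatibility is a formal consequence of the additivity of $\Hom_R(P,-)$. The only point requiring a little care is to check that the signs and the placement of degrees in the two cone complexes agree, which the entrywise matrix comparison settles.
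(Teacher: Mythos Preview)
Your proposal is correct and follows essentially the same approach as the paper: the paper's proof consists of the single line ``It follows by Proposition \ref{Char-fn-exact complex},'' and what you have written is precisely the unpacking of that reference via the cone criterion and the compatibility $\Hom_R(P,\Cone^\bullet(f))\cong\Cone^\bullet(\Hom_R(P,f))$.
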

	\begin{proof}  It follows by Proposition \ref{Char-fn-exact complex}.
	\end{proof}

	\begin{corollary}\label{cor-fn-p-qiso}  Let
		$$\xymatrix@R=25pt@C=30pt{
			X_1^{\bullet}\ar[d]^{h_1}\ar[r]^{}&X_2^{\bullet}\ar[r]^{}\ar[d]^{h_2}
			&X_3^{\bullet}\ar[r]^{}\ar[d]^{h_3}&X_1^{\bullet}[1] \ar[d]^{h_{1}[1]}\\
			Y_1^{\bullet}\ar[r]^{}&Y_2^{\bullet}\ar[r]^{}&Y_3^{\bullet}\ar[r]^{}&Y_1^{\bullet}[1]\\}$$
		be a morphism of triangles in $\K(R)$. If $h_1$ and $h_2$ are  $n$-quasi-isomorphisms, then $h_3$ is also an $n$-quasi-isomorphism.
	\end{corollary}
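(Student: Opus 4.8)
The plan is to reduce the statement to a property of ordinary quasi-isomorphisms by means of Proposition \ref{Char-fn-quas-iso}, and then to finish with the five lemma. First I would fix an arbitrary module $P\in\Proj^{<n+1}$ and apply the additive functor $\Hom_R(P,-)$ to the entire commutative diagram. Since $\Hom_R(P,-)$ is additive, it commutes with the shift functor and with the formation of mapping cones, so it is a triangulated functor $\K(R)\to\K(\mathbb{Z})$. In particular it sends the two distinguished triangles appearing as the rows of our diagram to distinguished triangles in $\K(\mathbb{Z})$, and it sends the vertical maps $h_1,h_2,h_3$ to the cochain maps $\Hom_R(P,h_1)$, $\Hom_R(P,h_2)$, $\Hom_R(P,h_3)$; the result is a morphism of distinguished triangles in $\K(\mathbb{Z})$.

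Next I would pass to the associated long exact sequences in cohomology. Each distinguished triangle in $\K(\mathbb{Z})$ gives a long exact sequence of the groups $\H^{m}\Hom_R(P,-)$, and the morphism of triangles produces a commutative ladder between the long exact sequence attached to the top row and the one attached to the bottom row, in which the vertical maps are those induced by $\Hom_R(P,h_1)$, $\Hom_R(P,h_2)$ and $\Hom_R(P,h_3)$, repeated periodically with the appropriate degree shifts. By hypothesis $h_1$ and $h_2$ are $n$-quasi-isomorphisms, so Proposition \ref{Char-fn-quas-iso} tells us that $\Hom_R(P,h_1)$ and $\Hom_R(P,h_2)$ are quasi-isomorphisms; that is, the corresponding vertical maps in the ladder are isomorphisms in every degree.

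I would then apply the five lemma to each five-term segment of the ladder centered on a term of the form $\H^{m}\Hom_R(P,X_3^{\bullet})$: the four neighbouring vertical maps come from $\Hom_R(P,h_1)$ and $\Hom_R(P,h_2)$ and are isomorphisms, so the middle vertical map, induced by $\Hom_R(P,h_3)$, is an isomorphism as well. As $m$ is arbitrary, $\Hom_R(P,h_3)\colon\Hom_R(P,X_3^{\bullet})\to\Hom_R(P,Y_3^{\bullet})$ is a quasi-isomorphism, and since $P\in\Proj^{<n+1}$ was arbitrary, Proposition \ref{Char-fn-quas-iso} yields that $h_3$ is an $n$-quasi-isomorphism, as desired.

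The only point that requires genuine care is the assertion that $\Hom_R(P,-)$ is triangulated, i.e.\ that it preserves mapping cones; this is where the argument would fail if one were sloppy. It holds for the purely formal reason that $\Cone^{\bullet}(\Hom_R(P,f))$ and $\Hom_R(P,\Cone^{\bullet}(f))$ coincide as complexes of abelian groups: the degreewise decomposition $\Cone^{\bullet}(f)^{i}=X^{i+1}\oplus X^{i}$ is carried by $\Hom_R(P,-)$ to $\Hom_R(P,X^{i+1})\oplus\Hom_R(P,X^{i})$ with the matching differential. Once this identification is in place, everything else is routine bookkeeping with the long exact sequences and the five lemma.
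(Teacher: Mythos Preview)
Your argument is correct and is essentially the same as the paper's proof: both fix $P\in\Proj^{<n+1}$, apply $\Hom_R(P,-)$ to obtain a morphism of triangles in $\K(\mathbb{Z})$, and then invoke the five lemma together with Proposition~\ref{Char-fn-quas-iso}. Your write-up is simply more detailed, spelling out why $\Hom_R(P,-)$ preserves distinguished triangles and how the five lemma is applied to the long exact cohomology sequences, whereas the paper leaves these steps implicit.
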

	\begin{proof} Let $P\in \Proj^{<n+1}$. Then, by using The five lemma and  Proposition \ref{Char-fn-quas-iso},   the assertion follows from the following commutative diagram
		$$\xymatrix@R=25pt@C=30pt{
			\Hom_R(P,X_1^{\bullet})\ar[d]^{\Hom_R(P,h_1)}\ar[r]^{}&\Hom_R(P,X_2^{\bullet})\ar[r]^{}\ar[d]^{\Hom_R(P,h_2)}
			&\Hom_R(P,X_3^{\bullet})\ar[r]^{}\ar[d]^{\Hom_R(P,h_3)}&\Hom_R(P,X_1^{\bullet}[1] ) \ar[d]^{\Hom_R(P,h_{1}[1])}\\
			\Hom_R(P,Y_1^{\bullet})\ar[r]^{}&\Hom_R(P,Y_2^{\bullet})\ar[r]^{}&\Hom_R(P,Y_3^{\bullet})\ar[r]^{}&\Hom_R(P,Y_1^{\bullet}[1])\\}$$
	\end{proof}

	We denote by $\K^{-}(\fP_n)$  the bounded above  homotopy category of $\fP_n$, i.e., the full subcategories of  $\K^{-}(R)$  with each term in $\fP_n$. Then we have the following two results.

	\begin{lemma} \label{char-pex-pquiso} Let $f:X^{\bullet}\rightarrow Y^{\bullet}$ be a cochain map of complexes. Then the following two statements hold.
		\begin{enumerate}
			\item The following statements are equivalent.
			\begin{enumerate}
				\item  $X^{\bullet}$  is an $n$-exact complex.
				\item  The complex $\Hom_R(P^{\bullet},X^{\bullet})$ is exact for any  $P^{\bullet}$ in $\K^{-}(\fP_n)$.
			\end{enumerate}

			\item  The following statements are equivalent.
			\begin{enumerate}[(a)]
				\item  $f$  is an  $n$-quasi-isomorphism.
				\item  $\Hom_R(P^{\bullet},f):\Hom_R(P^{\bullet},X^{\bullet})\rightarrow \Hom_R(P^{\bullet},Y^{\bullet})$ is a quasi-isomorphism for any  $P^{\bullet}$ in $\K^{-}(\fP_n)$.
			\end{enumerate}
		\end{enumerate}
	\end{lemma}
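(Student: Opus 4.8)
The two parts are parallel, with part (2) following from part (1) exactly as Proposition \ref{Char-fn-quas-iso} follows from Proposition \ref{Char-fn-exact complex}: a cochain map $f$ is an $n$-quasi-isomorphism precisely when $\Cone^{\bullet}(f)$ is $n$-exact, and for any fixed $P^{\bullet}$ the functor $\Hom_R(P^{\bullet},-)$ sends the mapping-cone triangle to a mapping-cone triangle of $\mathbb{Z}$-complexes, so $\Hom_R(P^{\bullet},f)$ is a quasi-isomorphism iff $\Hom_R(P^{\bullet},\Cone^{\bullet}(f))$ is exact. Hence I would prove (1) carefully and then deduce (2) in one sentence. Let me sketch the approach to (1).

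For the implication $(b)\Rightarrow(a)$, the plan is simply to specialize: every $P\in\Proj^{<n+1}$, and indeed every $n$-projective module, is a stalk complex lying in $\K^{-}(\fP_n)$, so the hypothesis that $\Hom_R(P^{\bullet},X^{\bullet})$ is exact for all $P^{\bullet}\in\K^{-}(\fP_n)$ immediately yields exactness of $\Hom_R(P,X^{\bullet})$ for all $n$-projective $P$, and then Proposition \ref{Char-fn-exact complex} gives that $X^{\bullet}$ is $n$-exact. This direction is essentially free.

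The substantive direction is $(a)\Rightarrow(b)$. Here I would fix a bounded-above complex $P^{\bullet}$ of $n$-projective modules, say $P^{\bullet}:\cdots\to P^{j}\to P^{j+1}\to\cdots$ with $P^{j}=0$ for $j\gg 0$ and each $P^{j}\in\fP_n$, and show that $\Hom_R(P^{\bullet},X^{\bullet})$ is exact whenever $X^{\bullet}$ is $n$-exact. The natural strategy is to filter $P^{\bullet}$ by its (hard or good) truncations and induct. Concretely, for the brutal truncation $\sigma^{\geq k}P^{\bullet}$ one has short exact sequences of complexes
$$0\rightarrow \sigma^{\geq k+1}P^{\bullet}\rightarrow \sigma^{\geq k}P^{\bullet}\rightarrow P^{k}[-k]\rightarrow 0,$$
where the quotient is a stalk complex on the single $n$-projective module $P^{k}$. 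Applying $\Hom_R(-,X^{\bullet})$ gives a short exact sequence of $\Hom$-complexes (it is degreewise split because the quotient is termwise projective-with-respect-to-the-relevant-class, or directly because $\sigma^{\geq k}P^{\bullet}$ splits termwise over the truncation), hence a long exact sequence in cohomology. Proposition \ref{Char-fn-exact complex} already tells me that $\Hom_R(P^{k}[-k],X^{\bullet})\cong\Hom_R(P^{k},X^{\bullet})[k]$ is exact, i.e.\ the stalk contributes zero cohomology. Since $P^{\bullet}$ is bounded above, the truncation $\sigma^{\geq k}P^{\bullet}$ is eventually all of $P^{\bullet}$ and is a bounded complex for each finite stage, so a downward induction on $k$ together with the long exact sequence forces $\Hh^{m}\Hom_R(\sigma^{\geq k}P^{\bullet},X^{\bullet})=0$ for every $m$ and every $k$, and hence $\Hom_R(P^{\bullet},X^{\bullet})$ is exact.

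The one point requiring care — and the step I expect to be the main obstacle — is the passage from the finite truncations to the full complex $P^{\bullet}$, because $\Hom_R(-,X^{\bullet})$ of an unbounded-below complex involves a product $\prod_{i}\Hom_R(P^i,X^{m+i})$, so exactness does not formally commute with the inverse limit over truncations without an argument. The clean way around this is to observe that, in each fixed cohomological degree $m$, the differential of $\Hom_R(P^{\bullet},X^{\bullet})$ in degrees $m-1,m,m+1$ only involves finitely many of the factors once one tracks a cocycle; more robustly, since $P^{\bullet}$ is bounded above, for computing $\Hh^{m}$ one may replace $P^{\bullet}$ by a bounded good truncation $\sigma^{\geq k}P^{\bullet}$ with $k$ small enough (this uses boundedness above to ensure the truncation agrees with $P^{\bullet}$ in all the relevant degrees contributing to $\Hh^{m}$), reducing to the already-handled bounded case. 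I would make this reduction explicit to avoid any Mittag-Leffler subtlety, after which the induction above closes the argument.
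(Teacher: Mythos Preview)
Your overall strategy is sound and the reductions $(b)\Rightarrow(a)$ in (1) and (2)-from-(1) are correct. The paper's own proof is a bare citation to \cite[Lemmas 2.4--2.5, Propositions 2.6--2.7]{CFH06}, so any self-contained argument is genuinely different; the CFH argument, however, is the direct ``build a null-homotopy degree by degree'' method rather than a truncation/long-exact-sequence induction.

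There is a real gap in your passage from bounded truncations to the full complex $P^{\bullet}$. Your claim that ``for computing $\Hh^{m}$ one may replace $P^{\bullet}$ by $\sigma^{\geq k}P^{\bullet}$ with $k$ small enough'' is false: in degree $m$ the Hom-complex is the product $\prod_{i\leq N}\Hom_R(P^{i},X^{m+i})$, and since $X^{\bullet}$ is unbounded every $P^{i}$ with $i\leq N$ contributes. Replacing $P^{\bullet}$ by $\sigma^{\geq k}P^{\bullet}$ changes this product (and hence potentially $\Hh^{m}$) for every $k$, no matter how negative. Boundedness above of $P^{\bullet}$ buys you nothing here in the absence of boundedness of $X^{\bullet}$.

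Two clean fixes are available. First, the very Mittag--Leffler/Milnor argument you set aside actually works immediately: the inverse system $\{\Hom_R(\sigma^{\geq k}P^{\bullet},X^{\bullet})\}_{k}$ has degreewise split surjective transition maps, so the Milnor sequence
\[
0\longrightarrow {\varprojlim}^{1}\,\Hh^{m-1}\longrightarrow \Hh^{m}\big(\Hom_R(P^{\bullet},X^{\bullet})\big)\longrightarrow \varprojlim\,\Hh^{m}\longrightarrow 0
\]
applies, and your induction already gives every $\Hh^{\ast}(\Hom_R(\sigma^{\geq k}P^{\bullet},X^{\bullet}))=0$, whence both outer terms vanish. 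Second, and closer to \cite{CFH06}, one can argue directly: to show $\Hh^{m}\Hom_R(P^{\bullet},X^{\bullet})=\Hom_{\K(R)}(P^{\bullet},X^{\bullet}[m])=0$, take a chain map $\phi:P^{\bullet}\to X^{\bullet}[m]$ and build a null-homotopy $s^{i}:P^{i}\to X^{m+i-1}$ by downward induction from the top degree $N$ of $P^{\bullet}$, using at each step that $P^{i}\in\fP_n$ lifts through the $n$-exact sequence $0\to\Ker d_{X^{\bullet}}^{m+i-1}\to X^{m+i-1}\to\Im d_{X^{\bullet}}^{m+i-1}\to 0$. This avoids limits entirely and is what makes the bounded-above hypothesis do real work.
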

	\begin{proof} (1) follows by Proposition \ref{Char-fn-exact complex}  and \cite[Lemma 2.4, Lemma 2.5]{CFH06}. (2) follows by Proposition \ref{Char-fn-quas-iso} and \cite[Proposition 2.6, Proposition 2.7]{CFH06}.
	\end{proof}

	\begin{proposition}\label{pure-quasi-iso-homot-equ} Let $f:X^{\bullet}\rightarrow Y^{\bullet}$ be an  $n$-quasi-isomorphism. If  $X^{\bullet}\in\K^{-}(\fP_n)$  and $Y^{\bullet}\in \K^{-}(\fP_n)$, then $f$ is a homotopy equivalence.
	\end{proposition}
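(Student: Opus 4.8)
The plan is to reduce the statement to the contractibility of the mapping cone of $f$ and then extract that contractibility from Lemma \ref{char-pex-pquiso} by a self-application. Set $C^{\bullet}:=\Cone^{\bullet}(f)$. Since $f$ is an $n$-quasi-isomorphism, $C^{\bullet}$ is by definition an $n$-exact complex. Its terms are $C^{i}=X^{i+1}\oplus Y^{i}$, so because $X^{\bullet},Y^{\bullet}\in\K^{-}(\fP_n)$ the complex $C^{\bullet}$ is again bounded above, and since $\fP_n$ is closed under direct sums each $C^{i}$ is $n$-projective; hence $C^{\bullet}\in\K^{-}(\fP_n)$. Recalling that a cochain map is a homotopy equivalence precisely when its mapping cone is contractible, it suffices to prove that $C^{\bullet}$ is contractible.

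The key observation is that $C^{\bullet}$ can be fed into Lemma \ref{char-pex-pquiso}(1) in both available roles simultaneously: it is an $n$-exact complex, and it is also an object of $\K^{-}(\fP_n)$ eligible to serve as the test complex $P^{\bullet}$. Taking $P^{\bullet}=C^{\bullet}$ in that lemma therefore yields that the $\Hom$-complex $\Hom_R(C^{\bullet},C^{\bullet})$ is exact.

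To finish, I would invoke the identity $\Hh^{m}\Hom_R(X^{\bullet},Y^{\bullet})=\Hom_{\K(R)}(X^{\bullet},Y^{\bullet}[m])$ recorded earlier, specialized to $m=0$ and $X^{\bullet}=Y^{\bullet}=C^{\bullet}$. Exactness of $\Hom_R(C^{\bullet},C^{\bullet})$ forces $\Hh^{0}\Hom_R(C^{\bullet},C^{\bullet})=0$, so $\Hom_{\K(R)}(C^{\bullet},C^{\bullet})=0$. In particular $\Id_{C^{\bullet}}=0$ in $\K(R)$, which says exactly that $\Id_{C^{\bullet}}$ is null-homotopic, i.e.\ that $C^{\bullet}$ is contractible. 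Consequently $f$ is a homotopy equivalence.

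The only genuinely delicate point is the bookkeeping that legitimizes the self-application: one must verify that the mapping cone lies in $\K^{-}(\fP_n)$ (boundedness above is immediate, and $n$-projectivity of each term uses closure of $\fP_n$ under direct sums) while also being $n$-exact, so that the single object $C^{\bullet}$ can occupy both slots of Lemma \ref{char-pex-pquiso}(1). Once this is in place the remaining argument is purely formal and produces no explicit contracting homotopy; alternatively one could build such a homotopy directly by descending induction, splitting at each stage an appropriate $n$-exact short exact sequence against the $n$-projective terms, but the $\Hom$-complex route is cleaner.
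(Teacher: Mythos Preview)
Your proof is correct and uses a genuinely different (and arguably slicker) packaging than the paper. The paper applies Lemma~\ref{char-pex-pquiso}(2) with $P^{\bullet}=Y^{\bullet}$ to see that $\Hom_R(Y^{\bullet},f)$ is a quasi-isomorphism, reads off a right homotopy inverse $g:Y^{\bullet}\to X^{\bullet}$ from the $\Hh^0$-isomorphism, and then repeats the argument with $g$ (which is again an $n$-quasi-isomorphism) to obtain a left inverse; the standard triangular trick then shows $f$ is a homotopy equivalence. You instead pass to the mapping cone $C^{\bullet}=\Cone^{\bullet}(f)$, observe that it lies in $\K^{-}(\fP_n)$ and is $n$-exact, and feed $C^{\bullet}$ into both slots of Lemma~\ref{char-pex-pquiso}(1) to kill $\Hom_{\K(R)}(C^{\bullet},C^{\bullet})$ outright. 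Your route avoids the two-pass construction and the implicit check that $g$ is again an $n$-quasi-isomorphism; the paper's route has the minor advantage of exhibiting the homotopy inverse explicitly. Both hinge on the same lemma and the identity $\Hh^0\Hom_R(-,-)=\Hom_{\K(R)}(-,-)$, so the difference is one of organization rather than of underlying content.
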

	\begin{proof}
		Since $Y^{\bullet}\in \K^{-}(\fP_n)$, by Lemma \ref{char-pex-pquiso}(2), $\Hom_R(Y^{\bullet},f): \Hom_R(Y^{\bullet},X^{\bullet})\rightarrow \Hom_R(Y^{\bullet},Y^{\bullet})$ is a quasi-isomorphism. So we have an isomorphism of abelian groups:
		$$\Hh^0(\Hom_R(Y^{\bullet},f)): \Hh^0(\Hom_R(Y^{\bullet},X^{\bullet}))\rightarrow \Hh^0(\Hom_R(Y^{\bullet},Y^{\bullet})).$$
		So there is a cochain map $g:Y^{\bullet}\rightarrow X^{\bullet}$ such that $fg\sim \Id_{Y^{\bullet}}$. Similarly, there exists a cochain map: $h:X^{\bullet}\rightarrow Y^{\bullet}$ such that $gh\sim \Id_{X^{\bullet}}$. Hence, $g$ and $f$ are homotopy equivalences.
	\end{proof}

	\section{The relative derived categories induced by $\FPR$}
	Set
	\begin{center}
		$\K_{n}(R)=\{X^{\bullet}\in \K(R)\mid  X^{\bullet}$  is an $n$-exact complex$\}$.
	\end{center}
	Recall from \cite[Definition 2.1.6]{N01} that a subcategory $\mathscr{C}$ of  a triangulated category   $\mathscr{T}$ is called to be  thick if it is triangulated, and it contains all direct summands of its objects.
	
	\begin{lemma}\label{thick-n}
		$\K_{n}(R)$ is a  thick subcategory of $\K(R)$.  So the Verdier quotient $\K(R)/\K_{n}(R)$ is naturally  a triangulated category.
	\end{lemma}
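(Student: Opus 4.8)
$\K_n(R)$ is a thick subcategory of $\K(R)$.

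The plan is to verify that $\K_n(R)$ satisfies the definition of a thick subcategory from \cite[Definition 2.1.6]{N01}: it is closed under shifts, closed under cones (hence triangulated), and closed under direct summands. The clean way to organize the whole argument is to exploit the homological characterization from Proposition \ref{Char-fn-exact complex}, namely that $X^{\bullet}\in\K_n(R)$ if and only if $\Hom_R(P,X^{\bullet})$ is exact for every $P\in\Proj^{<n+1}$. This reduces each closure property to the corresponding (and completely standard) closure property of the class of acyclic complexes of abelian groups, which lets me avoid grinding through the definition of $n$-exactness term by term.

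First I would check that $\K_n(R)$ is closed under the shift functor and its inverse: this is immediate, since $\Hom_R(P,X^{\bullet}[1])=\Hom_R(P,X^{\bullet})[1]$ is exact precisely when $\Hom_R(P,X^{\bullet})$ is. Next, for the triangulated structure, I would take a distinguished triangle $X^{\bullet}\to Y^{\bullet}\to Z^{\bullet}\to X^{\bullet}[1]$ in $\K(R)$ with two of the three terms in $\K_n(R)$, and apply the additive functor $\Hom_R(P,-)$ for a fixed $P\in\Proj^{<n+1}$. Since $\Hom_R(P,-)$ is a triangulated functor on the homotopy category, it sends this triangle to a distinguished triangle of complexes of abelian groups, which in turn induces the usual long exact sequence in homology. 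If two of the three complexes $\Hom_R(P,-)$ are acyclic, the long exact sequence forces the third to be acyclic as well; since $P$ was arbitrary, the third vertex lies in $\K_n(R)$. This establishes closure under cones and hence that $\K_n(R)$ is a triangulated subcategory.

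For thickness it remains to show closure under direct summands. Suppose $X^{\bullet}\oplus Y^{\bullet}\in\K_n(R)$; then for every $P\in\Proj^{<n+1}$ the complex $\Hom_R(P,X^{\bullet})\oplus\Hom_R(P,Y^{\bullet})=\Hom_R(P,X^{\bullet}\oplus Y^{\bullet})$ is acyclic, and since the homology of a direct sum is the direct sum of the homologies, each summand $\Hom_R(P,X^{\bullet})$ is acyclic; thus $X^{\bullet}\in\K_n(R)$. This gives all three conditions. I do not anticipate a genuine obstacle here: the only point requiring a little care is the passage from the distinguished triangle to the long exact sequence in homology, which relies on $\Hom_R(P,-)$ being exact enough on homotopy categories; this is guaranteed because $P$ is projective, so $\Hom_R(P,-)$ preserves the mapping-cone description of triangles. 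The final sentence of the statement, that $\K(R)/\K_n(R)$ is then triangulated, is automatic from Verdier's construction once thickness is in hand.
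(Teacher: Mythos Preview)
Your proposal is correct and follows essentially the same approach as the paper: both reduce, via Proposition \ref{Char-fn-exact complex}, to the standard fact that acyclic complexes of abelian groups form a thick subcategory of $\K(\mathbb{Z})$, checking closure under cones and under direct summands exactly as you describe. One small aside: your remark that projectivity of $P$ is what guarantees $\Hom_R(P,-)$ preserves the mapping-cone description of triangles is unnecessary---additivity alone suffices for this---but this does not affect the validity of the argument.
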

	\begin{proof} Suppose that $f: X^{\bullet}\rightarrow Y^{\bullet}$ is a cochain map between $n$-exact complexes. Then
		$\Cone^{\bullet}(f)$ is again $n$-exact. Thus $\K_{n}(R)$  is a triangulated subcategory of $\K(R)$.
		Moreover, $\K_{n}(R)$  is closed under direct summands. Indeed, let $X^{\bullet}_1\oplus X^{\bullet}_2\in \K_{n}(R)$.
		Then $\Hom_R(P, X^{\bullet}_1\oplus X^{\bullet}_2)$ is exact for any $P\in \Proj^{<n+1}$, which implies that $\Hom_R(P, X^{\bullet}_1)$ and $\Hom_R(P, X^{\bullet}_2)$ are exact. Thus $X^{\bullet}_1$ and $X^{\bullet}_2$ are $n$-exact.
		Therefore, $\K_{n}(R)$ is a thick subcategory of  $\K(R)$. So the Verdier quotient $\K(R)/\K_{n}(R)$ is naturally  a triangulated category (see \cite[Theorem 2.1.8]{N01}).
	\end{proof}

	\begin{definition}{\rm
			Let $R$ be a ring. The Verdier quotient
			$$\D_{n}(R):=\K(R)/\K_{n}(R)$$
			is said to be the {\it $n$-derived category} of $R$.}
	\end{definition}
	
	For $\ast\in \{\mbox{blank},+,-,b\}$, set $\K^{\ast}_{n}(R)=\{X^{\bullet}\in \K^{\ast}(R)\mid  X^{\bullet}$  is an $n$-exact complex$\}$. We can define $$\D^{\ast}_{n}(R):=\K^{\ast}(R)/\K^{\ast}_{n}(R).$$
	
	Since $0$-exact complexes are exactly exact complexes,  $\D^{\ast}_{0}(R)$ of $R$  is exactly the derived categories $\D^{\ast}(R)$ of $R$.  So the notion of $n$-derived categories is a generalization of that of derived categories. Next, we will show how to construct $n$-derived categories.

	Every morphism from $X^{\bullet}$ to $Y^{\bullet}$ in $\D^{\ast}_{n}(R)$ is called a left roof and can be written  as the form $$X^{\bullet}\stackrel{s}{\Longleftarrow}\bullet\stackrel{a}{\longrightarrow} Y^{\bullet}$$
	where $s$ is an $n$-quasi-isomorphism. Two roofs $X^{\bullet}\stackrel{s}{\Longleftarrow}\bullet\stackrel{a}{\longrightarrow} Y^{\bullet}$ and $X^{\bullet}\stackrel{s_1}{\Longleftarrow}\bullet\stackrel{a_1}{\longrightarrow} Y^{\bullet}$ are equivalent if there exists the following commutative diagram
	
	$$\xymatrix@R=20pt@C=25pt{ &\ar@{=>}[ld]_{s}  \bullet \ar[rd]^{a}&\\
		X^{\bullet}& \ar@{=>}[l]_{s_2}   \bullet\ar[u]\ar[d]\ar[r]^{a_2} &Y^{\bullet}\\
		&  \ar@{=>}[lu]^{s_1} \bullet\ar[ru]_{a_1} & \\
	}$$
	where $s_2$ is an $n$-quasi-isomorphism. We denote by $a/s$ the equivalent class of $X^{\bullet}\stackrel{s}{\Longleftarrow}\bullet\stackrel{a}{\longrightarrow} Y^{\bullet}$.  Dually, the right roof $X^{\bullet}\stackrel{a}{\longrightarrow}\bullet\stackrel{s}{\Longleftarrow} Y^{\bullet}$ and its equivalent class $a\backslash s$ are defined.

	Next, we will characterize the left little finitistic dimension of the ring $R$ in terms of $n$-derived categories.
	For $\ast\in \{\mbox{blank},+,-,b\}$, set
	\begin{center}
		$\K^{\ast}_{n}(R)=\{X^{\bullet}\in \K^{\ast}(R)\mid  X^{\bullet}$  is an $n$-exact complex$\}$.
	\end{center}
	As a similar argument in the proof of Lemma \ref{thick-n}, one can show $\K^{\ast}_{m}(R)$ is a  thick subcategory of $\K^{\ast}_{n}(R)$ for any $m\geq n$. Set $\K^{\ast}_{n,m}(R)=\K^{\ast}_{n}(R)/\K^{\ast}_{m}(R)$ the  Verdier quotient of $\K^{\ast}_{n}(R)$ by $\K^{\ast}_{m}(R)$. Then $\K^{\ast}_{n,m}(R)$ is a  thick subcategory of $\D^{\ast}_{n}(R)$ for any $m>n$.
	
	\begin{theorem}\label{fpd-derived} Let $R$ be a ring. Then for any $\ast\in \{\mbox{blank},+,-,b\}$, there is a  triangulated equivalence for each $n$ and each $m\geq n$:
		$$\D^{\ast}_{n}(R)\cong \D^{\ast}_{m}(R)/\K^{\ast}_{n,m}(R).$$
		
		Moreover, the following statements are equivalents for each $n\in\mathbb{N}$:
		\begin{enumerate}
			\item $l.\fPD(R)\leq n$.
			\item $\K^{\ast}_{n,m}(R)=0$ for any $m>n$.
			\item $\K^{\ast}_{n,m}(R)=0$ for some $m>n$.
			\item $\D^{\ast}_{m}(R)$ is naturally triangulated equivalence to $\D^{\ast}_{n}(R)$ for any $m>n$.
			\item $\D^{\ast}_{m}(R)$ is naturally  triangulated equivalence to $\D^{\ast}_{n}(R)$ for some $m>n$.
		\end{enumerate}

	\end{theorem}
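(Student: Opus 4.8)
The plan is to deduce everything from the displayed equivalence, which itself is an instance of the standard ``third isomorphism theorem'' for Verdier quotients. Since $m\geq n$ forces $\mathscr{E}_{m}\subseteq\mathscr{E}_{n}$, every $m$-exact complex is $n$-exact, so (arguing as in Lemma \ref{thick-n}) we have a chain of thick subcategories $\K^{\ast}_{m}(R)\subseteq\K^{\ast}_{n}(R)\subseteq\K^{\ast}(R)$. First I would invoke the general fact that for thick subcategories $\mathscr{R}\subseteq\mathscr{S}\subseteq\mathscr{T}$ of a triangulated category there is a canonical triangulated equivalence $\mathscr{T}/\mathscr{S}\cong(\mathscr{T}/\mathscr{R})/(\mathscr{S}/\mathscr{R})$, induced by the natural quotient functor (see \cite{N01}). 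Applying this with $\mathscr{R}=\K^{\ast}_{m}(R)$, $\mathscr{S}=\K^{\ast}_{n}(R)$ and $\mathscr{T}=\K^{\ast}(R)$ yields $\D^{\ast}_{n}(R)\cong\D^{\ast}_{m}(R)/\K^{\ast}_{n,m}(R)$, where the equivalence is realized by the natural functor $\D^{\ast}_{m}(R)\to\D^{\ast}_{n}(R)$.

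For the list of equivalences, the key observation is that, because $\K^{\ast}_{n,m}(R)$ is a thick subcategory of $\D^{\ast}_{m}(R)$, an object of $\K^{\ast}_{n}(R)$ becomes zero in $\D^{\ast}_{m}(R)$ exactly when it already lies in $\K^{\ast}_{m}(R)$; hence $\K^{\ast}_{n,m}(R)=0$ if and only if $\K^{\ast}_{n}(R)=\K^{\ast}_{m}(R)$, that is, if and only if every $n$-exact complex of the prescribed boundedness $\ast$ is $m$-exact. This links $\K^{\ast}_{n,m}(R)=0$ directly to Corollary \ref{fpd-exact-comp}. I would then run the cycle $(1)\Rightarrow(2)\Rightarrow(3)\Rightarrow(1)$: the implication $(1)\Rightarrow(2)$ uses that $l.\fPD(R)\leq n$ makes every $n$-exact complex $m$-exact for every $m>n$; $(2)\Rightarrow(3)$ is immediate; and $(3)\Rightarrow(1)$ uses the ``for some $m>n$'' clause of Corollary \ref{fpd-exact-comp}, matching the quantifiers precisely.

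It remains to incorporate statements $(4)$ and $(5)$ through the chain $(2)\Rightarrow(4)\Rightarrow(5)\Rightarrow(3)$. For $(2)\Rightarrow(4)$ I would substitute $\K^{\ast}_{n,m}(R)=0$ into the equivalence of the first part, so that the natural functor $\D^{\ast}_{m}(R)\to\D^{\ast}_{n}(R)$ becomes a triangulated equivalence for every $m>n$; $(4)\Rightarrow(5)$ is trivial. The one genuinely non-formal step is $(5)\Rightarrow(3)$: here I would use that the natural functor is precisely the Verdier localization $\D^{\ast}_{m}(R)\to\D^{\ast}_{m}(R)/\K^{\ast}_{n,m}(R)$, and that such a localization functor is an equivalence only when its kernel is a zero subcategory; the thickness of $\K^{\ast}_{n,m}(R)$ guarantees that every object mapping to zero already lies in $\K^{\ast}_{n,m}(R)$, so the equivalence forces $\K^{\ast}_{n,m}(R)=0$.

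The main obstacle is conceptual rather than computational: it is pinning down that the ``natural'' equivalence asserted in $(4)$ and $(5)$ is exactly the quotient functor appearing in the first part, and that this functor is an equivalence if and only if its kernel $\K^{\ast}_{n,m}(R)$ vanishes. Once the third isomorphism theorem is set up correctly and the thickness of $\K^{\ast}_{n,m}(R)$ is used to read off which objects die in the quotient, the remaining implications are bookkeeping, with Corollary \ref{fpd-exact-comp} supplying the bridge to $l.\fPD(R)\leq n$.
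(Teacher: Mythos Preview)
Your proposal is correct and follows essentially the same approach as the paper: both obtain the displayed equivalence from the universal property of iterated Verdier quotients (the ``third isomorphism theorem''), both reduce the vanishing of $\K^{\ast}_{n,m}(R)$ to the equality $\K^{\ast}_{n}(R)=\K^{\ast}_{m}(R)$ via thickness, and both close the cycle through Corollary~\ref{fpd-exact-comp}. Your organization $(1)\Rightarrow(2)\Rightarrow(3)\Rightarrow(1)$ together with $(2)\Rightarrow(4)\Rightarrow(5)\Rightarrow(3)$ is a minor repackaging of the paper's $(1)\Rightarrow(2)$, $(3)\Rightarrow(1)$, $(2)\Leftrightarrow(4)$, $(3)\Leftrightarrow(5)$, and you are slightly more explicit than the paper in isolating why the ``natural'' equivalence in $(5)$ forces the kernel $\K^{\ast}_{n,m}(R)$ to vanish.
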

	\begin{proof} The triangulated equivalence $\D^{\ast}_{n}(R)\cong \D^{\ast}_{m}(R)/\K^{\ast}_{n,m}(R)$  follows by their definitions  and the universal properties of Verdier quotients. Next we will show the equivalences of $(1)-(5)$.
		
		$(2)\Rightarrow (3)$ and $(4)\Rightarrow (5)$ Trivial.  $(2)\Leftrightarrow (4)$ and $(3)\Leftrightarrow (5)$ follow by the triangulated equivalence $\D^{\ast}_{n}(R)\cong \D^{\ast}_{m}(R)/\K^{\ast}_{n,m}(R).$
		
		$(1)\Rightarrow (2)$ Suppose $l.\fPD(R)\leq n$. Then by Corollary \ref{fpd-exact-comp}, $\K^{\ast}_{n}(R)=\K^{\ast}_{m}(R)$ for any $m>n$. Thus $\K^{\ast}_{n,m}(R)=0$ for any $m>n$.

		$(3)\Rightarrow (1)$ Let $m>n$.  By Corollary \ref{fpd-exact-comp}, we just need to show that every $n$-exact complex is $m$-exact. Let $X^{\bullet}$ be an $n$-exact complex.  Since $\K^{\ast}_{n,m}(R)=0$, $\K^{\ast}_{n}(R)=\K^{\ast}_{m}(R)$. So $X^{\bullet}$ is homotopic to an $m$-exact complex, say  $Y^{\bullet}$. Let $P\in \fP_m$. Then $\Hom_R(P,X^{\bullet})$ is also homotopic to $\Hom_R(P,Y^{\bullet})$  which is an exact complex. Thus $\Hom_R(P,X^{\bullet})$ is also exact. So  $X^{\bullet}$ is an $n$-exact complex.
	\end{proof}

	We recall the notion of pure derived categories from \cite{ZZ16}. Set $\K_{\mathcal{PE}}(R)=\{X^{\bullet}\in \K(R)\mid  X^{\bullet}$  is a pure exact complex$\}$. Then $\K_{\mathcal{PE}}(R)$ is a thick subcategory of $\K(R)$. The pure derived category of $R$ is defined to be $$\D_{\textbf{pur}}(R)=\K(R)/\K_{\mathcal{PE}}(R).$$
	Similarly, the authors in \cite{ZZ16} defined $\K^{\ast}_{\mathcal{PE}}(R)$ and $\D^{\ast}_{\textbf{pur}}(R)$ for any $\ast\in \{\mbox{blank},+,-,b\}$. It is also easy to  show that $\K^{\ast}_{\mathcal{PE}}(R)$ is a  thick subcategory of $\K^{\ast}_{n}(R)$. Set $\K^{\ast}_{n,\textbf{pur}}(R)=\K^{\ast}_{n}(R)/\K^{\ast}_{\textbf{pur}}(R)$ the  Verdier quotient of $\K^{\ast}_{n}(R)$ by $\K^{\ast}_{\textbf{pur}}(R)$. Then $\K^{\ast}_{n,\textbf{pur}}(R)$ is a  thick subcategory of $\D^{\ast}_{n}(R)$.
	
	\begin{theorem} Let $R$ be a ring. Then for any $\ast\in \{\mbox{blank},+,-,b\}$, there is a  triangulated equivalence for each $n$:
		$$\D^{\ast}_{n}(R)\cong \D^{\ast}_{\textbf{pur}}(R)/\K^{\ast}_{n,\textbf{pur}}(R).$$
Furthermore, the following statements are equivalents:
		\begin{enumerate}
			\item $R$ is a left coherent ring  over which every finitely presented $R$-module has  projective dimension $<n+1$.
			\item $\K^{\ast}_{n,\textbf{pur}}(R)=0$.
			\item $\D^{\ast}_{\textbf{pur}}(R)$ is naturally  triangulated equivalence to $\D^{\ast}_{n}(R)$.
		\end{enumerate}
		Moreover, if $n\in\mathbb{N}$, then all above are equivalent to that   $R$ is a left coherent ring with $\cwd(R)<n+1$.
	\end{theorem}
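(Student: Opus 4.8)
The plan is to follow the template of Theorem \ref{fpd-derived}, with $\K^{\ast}_{\textbf{pur}}(R)$ playing the role there occupied by $\K^{\ast}_{m}(R)$. Since every pure exact complex is $n$-exact, we have a nested chain of thick subcategories $\K^{\ast}_{\textbf{pur}}(R)\subseteq \K^{\ast}_{n}(R)\subseteq \K^{\ast}(R)$, and both $\D^{\ast}_{\textbf{pur}}(R)=\K^{\ast}(R)/\K^{\ast}_{\textbf{pur}}(R)$ and $\D^{\ast}_{n}(R)=\K^{\ast}(R)/\K^{\ast}_{n}(R)$ are Verdier quotients of $\K^{\ast}(R)$. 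First I would deduce the triangulated equivalence $\D^{\ast}_{n}(R)\cong \D^{\ast}_{\textbf{pur}}(R)/\K^{\ast}_{n,\textbf{pur}}(R)$ from the universal property of Verdier quotients (the ``third isomorphism theorem'' for a nested chain of thick subcategories), precisely as in the first assertion of Theorem \ref{fpd-derived}. Under this identification, the canonical comparison functor $\D^{\ast}_{\textbf{pur}}(R)\to \D^{\ast}_{n}(R)$ induced by the inclusion $\K^{\ast}_{\textbf{pur}}(R)\subseteq \K^{\ast}_{n}(R)$ becomes the quotient functor $\D^{\ast}_{\textbf{pur}}(R)\to \D^{\ast}_{\textbf{pur}}(R)/\K^{\ast}_{n,\textbf{pur}}(R)$.

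The equivalences $(1)\Leftrightarrow(2)\Leftrightarrow(3)$ I would reduce to the single bridging fact that $\K^{\ast}_{n,\textbf{pur}}(R)=0$ if and only if $\K^{\ast}_{\textbf{pur}}(R)=\K^{\ast}_{n}(R)$, i.e. if and only if every $n$-exact complex in $\K^{\ast}(R)$ is pure exact. Granting this, $(1)\Leftrightarrow(2)$ is immediate from Corollary \ref{fpn fp-exacomp}, which characterizes exactly when every $n$-exact complex is pure exact. For $(2)\Leftrightarrow(3)$ I would use that a Verdier quotient functor $\mathcal{T}\to\mathcal{T}/\mathcal{S}$ is a triangulated equivalence precisely when $\mathcal{S}=0$; applied to the comparison functor identified above, this says $\D^{\ast}_{\textbf{pur}}(R)\to\D^{\ast}_{n}(R)$ is an equivalence iff $\K^{\ast}_{n,\textbf{pur}}(R)=0$. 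The final clause is then read off from the ``moreover'' part of Corollary \ref{fpn fp-exacomp} (equivalently Proposition \ref{fpn fp}): for $n\in\mathbb{N}$, condition $(1)$ is the same as $R$ being left coherent with $\cwd(R)<n+1$.

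I expect the main obstacle to be justifying the bridging fact, and in particular the direction $\K^{\ast}_{n,\textbf{pur}}(R)=0\Rightarrow$ every $n$-exact complex is pure exact. The vanishing of the Verdier quotient only guarantees a priori that each object of $\K^{\ast}_{n}(R)$ is, in $\K^{\ast}(R)$, a direct summand of an object of $\K^{\ast}_{\textbf{pur}}(R)$, hence merely homotopy equivalent to a summand of a pure exact complex. To upgrade this to genuine pure exactness I would invoke the standard characterization of pure exact complexes (the pure analogue of Proposition \ref{Char-fn-exact complex}), namely that $X^{\bullet}$ is pure exact iff $\Hom_R(P,X^{\bullet})$ is exact for every finitely presented $P$; this property is visibly closed under direct summands and invariant under homotopy equivalence, so an $n$-exact $X^{\bullet}$ that is a homotopy summand of a pure exact complex is itself pure exact. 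This is the exact analogue of the homotopy argument in the step $(3)\Rightarrow(1)$ of Theorem \ref{fpd-derived}. The reverse direction is routine: viewing an $n$-exact short exact sequence as a bounded complex shows that pure exactness of all $n$-exact complexes forces $\mathscr{E}_{n}=\mathscr{E}_{fp}$, whence $\K^{\ast}_{\textbf{pur}}(R)=\K^{\ast}_{n}(R)$ and the quotient vanishes. Once the bridging fact is secured, all remaining implications are formal.
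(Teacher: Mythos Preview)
Your proposal is correct and follows essentially the same approach as the paper, whose proof is literally ``It is similar with the proof of Theorem \ref{fpd-derived}, and so we omitted it.'' You have accurately reconstructed that template: the first assertion via the universal property of iterated Verdier quotients, $(1)\Leftrightarrow(2)$ via Corollary \ref{fpn fp-exacomp} in place of Corollary \ref{fpd-exact-comp}, $(2)\Leftrightarrow(3)$ via the identification of the comparison functor with the quotient, and the homotopy/$\Hom$ argument for the bridging fact exactly as in the step $(3)\Rightarrow(1)$ of Theorem \ref{fpd-derived}. Your extra caution about summands is harmless but in fact unnecessary: since $\K^{\ast}_{\textbf{pur}}(R)$ is thick, the kernel of the Verdier quotient is $\K^{\ast}_{\textbf{pur}}(R)$ itself, so $\K^{\ast}_{n,\textbf{pur}}(R)=0$ already forces every $n$-exact complex to be isomorphic in $\K^{\ast}(R)$ to a pure exact one, and then your $\Hom_R(P,-)$ argument (which is precisely the paper's) finishes.
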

	\begin{proof} It is similar with the proof of Theorem \ref{fpd-derived}, and so we omitted it.
	\end{proof}

	\begin{proposition}\label{prop-K-D}  Let  $X^{\bullet}\in\K^{-}_{n}(\fP_n)$ and $Y^{\bullet}\in \K(R)$. Then the localization functor $$\mathbb{F}: \Hom_{\K(R)}(X^{\bullet},Y^{\bullet})\rightarrow  \Hom_{\D_{n}(R)}(X^{\bullet},Y^{\bullet}),\ f\mapsto f/\Id_{X^{\bullet}} $$
			induces an isomorphism of abelian groups.
\end{proposition}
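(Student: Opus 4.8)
The plan is to reduce the whole statement to a single homotopy-vanishing and then run the standard argument showing that a bounded-above complex of $n$-projectives is ``local'' with respect to the $n$-quasi-isomorphisms. The orthogonality that drives everything is: for every $E^\bullet\in\K_{n}(R)$, i.e. every $n$-exact complex, one has $\Hom_{\K(R)}(X^\bullet,E^\bullet[m])=0$ for all $m\in\mathbb{Z}$. Indeed, since $X^\bullet\in\K^{-}(\fP_n)$, Lemma~\ref{char-pex-pquiso}(1) shows that the Hom-complex $\Hom_R(X^\bullet,E^\bullet)$ is exact; combined with the identity $\Hh^m\Hom_R(X^\bullet,E^\bullet)=\Hom_{\K(R)}(X^\bullet,E^\bullet[m])$ recorded above, this gives the claimed vanishing, and in particular $\Hom_{\K(R)}(X^\bullet,E^\bullet)=0$. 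This is the only step where the hypothesis on $X^\bullet$ enters, and it is the heart of the matter.

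For surjectivity of $\mathbb{F}$ I would start from an arbitrary morphism in $\D_{n}(R)$, written as a left roof $X^\bullet\stackrel{s}{\Longleftarrow}W^\bullet\stackrel{a}{\longrightarrow}Y^\bullet$ with $s$ an $n$-quasi-isomorphism. Completing $s$ to a triangle $W^\bullet\xrightarrow{s}X^\bullet\to\Cone^\bullet(s)\to W^\bullet[1]$ in $\K(R)$ gives $\Cone^\bullet(s)\in\K_{n}(R)$ by definition of an $n$-quasi-isomorphism together with Lemma~\ref{thick-n}. Applying the cohomological functor $\Hom_{\K(R)}(X^\bullet,-)$ and using $\Hom_{\K(R)}(X^\bullet,\Cone^\bullet(s))=0$, I conclude that $s_*\colon\Hom_{\K(R)}(X^\bullet,W^\bullet)\to\Hom_{\K(R)}(X^\bullet,X^\bullet)$ is surjective, so $\Id_{X^\bullet}$ lifts to a cochain map $r\colon X^\bullet\to W^\bullet$ with $sr\sim\Id_{X^\bullet}$. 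A direct comparison of roofs, using $r$ as the refinement map, then shows $a/s=(a\circ r)/\Id_{X^\bullet}=\mathbb{F}(a\circ r)$, so $\mathbb{F}$ is onto.

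For injectivity I would take a cochain map $f\colon X^\bullet\to Y^\bullet$ with $\mathbb{F}(f)=0$. By the equivalence of roofs in the Verdier quotient there is an $n$-quasi-isomorphism $t\colon V^\bullet\to X^\bullet$ with $f\circ t\sim 0$. Completing $t$ to a triangle $V^\bullet\xrightarrow{t}X^\bullet\xrightarrow{u}\Cone^\bullet(t)\to V^\bullet[1]$ and applying $\Hom_{\K(R)}(-,Y^\bullet)$, the relation $t^{*}(f)=0$ forces a factorization $f=g\circ u$ for some $g\colon\Cone^\bullet(t)\to Y^\bullet$. Since $\Cone^\bullet(t)\in\K_{n}(R)$, the orthogonality gives $u\in\Hom_{\K(R)}(X^\bullet,\Cone^\bullet(t))=0$, whence $f=g\circ u=0$ in $\K(R)$; together with additivity of $\mathbb{F}$ this yields injectivity.

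The genuinely essential input is the orthogonality of the first paragraph; everything afterward is routine calculus-of-fractions bookkeeping, so I expect the main obstacle to be keeping the variance consistent. Both the surjectivity and the injectivity steps must invoke the \emph{same} left-orthogonality $\Hom_{\K(R)}(X^\bullet,-)=0$ on $\K_{n}(R)$, so the completing triangles and the long exact Hom-sequences must be set up so that $X^\bullet$ always occupies the first variable. One must also keep in mind throughout that ``$n$-quasi-isomorphism'' means precisely ``cone lies in $\K_{n}(R)$'', which is exactly what makes $\Cone^\bullet(s)$ and $\Cone^\bullet(t)$ available to the orthogonality.
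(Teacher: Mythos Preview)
Your argument is correct and rests on the same idea as the paper's proof: the key input is the orthogonality $\Hom_{\K(R)}(X^\bullet,E^\bullet[m])=0$ for every $n$-exact $E^\bullet$, which is exactly what Lemma~\ref{char-pex-pquiso} provides. The paper packages this slightly differently: instead of invoking the long exact $\Hom$-sequence attached to the triangle on $s$ (resp.\ $t$), it quotes ``the proof of Proposition~\ref{pure-quasi-iso-homot-equ}'' to produce directly a one-sided homotopy inverse $h$ of the $n$-quasi-isomorphism $g\colon Z^\bullet\to X^\bullet$, and then kills $f$ via $f\sim f(gh)=(fg)h\sim 0$. Your cone-factorization for injectivity ($f=g\circ u$ with $u=0$) is an equivalent rephrasing of the same vanishing. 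Thus the two proofs differ only cosmetically: yours spells out the orthogonality and the triangle bookkeeping, while the paper compresses everything into a reference to the section constructed in Proposition~\ref{pure-quasi-iso-homot-equ}.
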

	\begin{proof} Suppose $f/\Id_{X^{\bullet}}=0=0/\Id_{X^{\bullet}}$. Then there is an $n$-quasi-isomorphism $g:Z^{\bullet}\rightarrow X^{\bullet}$ such that $gf\sim 0$.
		By the proof of Proposition \ref{pure-quasi-iso-homot-equ}, there exists an $n$-quasi-isomorphism  $h:X^{\bullet}\rightarrow Z^{\bullet}$ such that $gh\sim\Id_{X^{\bullet}}$. Hence $f\sim 0$, and so $\mathbb{F}$ is injective.
		Let $f/s\in \Hom_{\D_{n}(R)}(X^{\bullet},Y^{\bullet})$. By the proof of Propsotion \ref{pure-quasi-iso-homot-equ} again, there exits an $n$-quasi-isomorphism $t$ such that $st\sim \Id_{X^{\bullet}}$. So $f/s=(ft)/\Id_{X^{\bullet}}\in \D_{n}(R)$, and thus $\mathbb{F}$ is surjective.
	\end{proof}

	\begin{lemma}\label{fnpure-homotopy} The following two statements hold.
		\begin{enumerate}
			\item Let $Y^{\bullet}\rightarrow X^{\bullet}$ be an  $n$-quasi-isomorphism with $X^{\bullet}\in\K^b(R),Y^{\bullet}\in \K^+(R)$. Then there exists an $n$-quasi-isomorphism $Z^{\bullet}\rightarrow Y^{\bullet}$ with $Z^{\bullet}\in\K^b(R)$.
			\item Let $X^{\bullet}\rightarrow Y^{\bullet}$ be an  $n$-quasi-isomorphism with $X^{\bullet}\in\K^+(R),Y^{\bullet}\in \K(R)$. Then there exists an $n$-quasi-isomorphism $Y^{\bullet}\rightarrow Z^{\bullet}$ with $Z^{\bullet}\in\K^+(R)$.
		\end{enumerate}
	\end{lemma}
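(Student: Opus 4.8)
The plan is to reduce both statements to the homological criterion of Proposition \ref{Char-fn-quas-iso}: a cochain map $u$ is an $n$-quasi-isomorphism exactly when $\Hom_R(P,u)$ is an ordinary quasi-isomorphism for every $P\in\Proj^{<n+1}$. So from the given $n$-quasi-isomorphism I first read off the vanishing of the relative homology $\Hh^i(\Hom_R(P,-))$ outside the range forced by the bounded (resp.\ bounded-below) complex, and then manufacture $Z^\bullet$. The whole argument turns on the fact that $\Hom_R(P,-)$ is left exact, hence commutes with kernels but not with cokernels; this asymmetry makes (1) routine and is precisely the source of trouble in (2). For (1), write $X^{i}=0$ for $i>b$; then $\Hh^{i}(\Hom_R(P,Y^\bullet))\cong\Hh^{i}(\Hom_R(P,X^\bullet))=0$ for all $i>b$ and all $P\in\Proj^{<n+1}$. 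I would take the good truncation from above
$$Z^\bullet:=\tau^{\le b}Y^\bullet=\bigl(\cdots\to Y^{b-1}\to\Ker d_{Y^\bullet}^{b}\to 0\to\cdots\bigr),$$
a subcomplex of $Y^\bullet$ which is bounded because $Y^\bullet$ is bounded below. Left exactness of $\Hom_R(P,-)$ gives the identification $\Hom_R(P,\tau^{\le b}Y^\bullet)=\tau^{\le b}\Hom_R(P,Y^\bullet)$, so the inclusion induces the canonical map $\tau^{\le b}\Hom_R(P,Y^\bullet)\to\Hom_R(P,Y^\bullet)$, which is a quasi-isomorphism by the vanishing just recorded. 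Proposition \ref{Char-fn-quas-iso} then makes $Z^\bullet\hookrightarrow Y^\bullet$ an $n$-quasi-isomorphism, settling (1).

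For (2), write $X^{i}=0$ for $i<c$, so $\Hh^{i}(\Hom_R(P,Y^\bullet))=0$ for all $i<c$ and all $P\in\Proj^{<n+1}$. The naive dual would use the good truncation from below $\tau^{\ge c}Y^\bullet$ with the quotient map $Y^\bullet\to\tau^{\ge c}Y^\bullet$. Here one still gets a clean partial fact: since $\Hom_R(P,-)$ commutes with $\tau^{\le c-1}$, the discarded subcomplex satisfies $\Hom_R(P,\tau^{\le c-1}Y^\bullet)=\tau^{\le c-1}\Hom_R(P,Y^\bullet)$, whose homology vanishes in every degree, so $\tau^{\le c-1}Y^\bullet$ is $n$-exact by Proposition \ref{Char-fn-exact complex}. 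The trouble is that the short exact sequence $0\to\tau^{\le c-1}Y^\bullet\to Y^\bullet\to\tau^{\ge c}Y^\bullet\to 0$ is not termwise split at the boundary degree, and because $\Hom_R(P,-)$ is only left exact it need not remain exact after applying $\Hom_R(P,-)$; consequently the quotient map is in general \emph{not} an $n$-quasi-isomorphism. This failure of right exactness is the main obstacle, and it shows that $Z^\bullet$ must be obtained by resolving rather than by truncating.

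To get around it I would replace $Y^\bullet$ by a bounded-below coresolution by $n$-injectives. Using the existence of $n$-injective coresolutions (Remark \ref{ppr-M}) and assembling them degreewise in the usual way, one produces a complex $Z^\bullet\in\K^{+}(\fI_n)\subseteq\K^{+}(R)$ together with a structural $n$-quasi-isomorphism $Y^\bullet\to Z^\bullet$; the point at which the hypothesis is used is that the vanishing $\Hh^{i}(\Hom_R(P,Y^\bullet))=0$ for $i<c$ lets the coresolution be started in degree $c$, so that $Z^\bullet$ is bounded below. The technical heart of (2) is therefore exactly this construction: building the coresolution bounded below and verifying, via Proposition \ref{Char-fn-quas-iso} together with the $\K^{+}(\fI_n)$-analogue of Lemma \ref{char-pex-pquiso}, that the structural map really is an $n$-quasi-isomorphism. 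I expect all the genuine difficulty of the lemma to sit in this step, since it is where the non-right-exactness of $\Hom_R(P,-)$—harmless for the truncation from above in (1)—must be absorbed by passing from truncations to resolutions.
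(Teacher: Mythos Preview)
Your argument for (1) is correct and coincides with the paper's: both take the good truncation from above $Z^\bullet=\tau^{\le b}Y^\bullet$ and use that $\Hom_R(P,-)$ preserves kernels to see that the inclusion $Z^\bullet\hookrightarrow Y^\bullet$ is an $n$-quasi-isomorphism.

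For (2) your diagnosis of the asymmetry is accurate, but the paper handles it far more simply than you do, and your proposed fix contains a gap. The paper \emph{does} take the good truncation from below $Z^\bullet=\tau^{\ge 0}Y^\bullet$ together with the quotient map $Y^\bullet\to Z^\bullet$; it justifies this not through $\Hom_R(P,-)$ but through the tensor side of the theory: since $P\otimes_R-$ is right exact it commutes with cokernels, so $P\otimes_R\tau^{\ge 0}Y^\bullet=\tau^{\ge 0}(P\otimes_RY^\bullet)$ and the truncation map becomes a quasi-isomorphism after tensoring with every $P\in\Proj^{<n+1}$. (This relies implicitly on the tensor criterion of Theorem~\ref{cohen}(4), which is stated only for commutative $R$; the paper glosses over that point.) Thus the paper's proof of (2) is the exact mirror of (1), trading the left-exact $\Hom$ for the right-exact tensor.

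Your alternative route through $n$-injective coresolutions does not work as written. Remark~\ref{ppr-M} supplies coresolutions of \emph{modules}; assembling these degreewise for the unbounded complex $Y^\bullet$ in the Cartan--Eilenberg fashion yields a double complex with nonzero columns wherever $Y^\bullet$ is nonzero, hence a total complex that is not bounded below. The vanishing of $\Hh^{i}(\Hom_R(P,Y^\bullet))$ for $i<c$ does \emph{not} by itself let you ``start the coresolution in degree $c$''; to do that you would first have to replace $Y^\bullet$ by something bounded below, which is exactly statement~(2). If you want to rescue the injective approach, resolve the bounded-below complex $X^\bullet$ instead to get $X^\bullet\to I^\bullet$ with $I^\bullet\in\K^+(\fI_n)$, then use the $\fI_n$-dual of Lemma~\ref{char-pex-pquiso} to factor this map through the given $n$-quasi-isomorphism $X^\bullet\to Y^\bullet$; the two-out-of-three property yields the desired $Y^\bullet\to I^\bullet$. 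But this is considerably more work than the paper's one-line truncation.
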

	\begin{proof} (1) Without loss of generality, we may assume that $\Hh^i(M\otimes_RY^{\bullet})=0$ for any $i<0$ and $\Hh^i(\Hom_R(P,Y^{\bullet}))=0$ for any $n$-projective $R$-modules $P$ and any $i<0$. Then we have the following commutative diagram:
		$$\xymatrix@R=20pt@C=20pt{
			\cdots\ar[r]^{}&0 \ar@{=}[d]^{}\ar[r]^{} & Y^0\ar@{=}[d]\ar[r]^{} & \cdots\ar[r]^{} &Y^{m-1}\ar@{=}[d]\ar[r]^{} & \Ker(d_{Y^{\bullet}}^m)\ar[d]^{}\ar[r]^{}& 0\ar[d]^{}\ar[r]^{}&\cdots \\
			\cdots\ar[r]^{}&0 \ar[r]^{} & Y^0\ar[r]^{} & \cdots \ar[r]^{} &Y^{m-1}\ar[r]^{} & Y^{m}\ar[r]^{} &Y^{m+1}\ar[r]^{} &\cdots\\}$$
		Define the complex $Z^{\bullet}$ to be the upper row. Since $\Hom_R(P,-)$ preserves kernels, the cochain map above is clear an $n$-quasi-isomorphism by Proposition \ref{Char-fn-quas-iso}(2).

		(2) Without loss of generality, we may assume that $Y^i=0$ for any $i<0$ and $\Hh^i(\Hom_R(P,Y^{\bullet}))=0$ for any $n$-projective $R$-modules $P$ and any $i\geq m+1$. Then we have the following commutative diagram:
		$$\xymatrix@R=20pt@C=20pt{
			\cdots\ar[r]^{}&Y^{-2}\ar[d] \ar[r]^{} & Y^{-1}\ar[d]^{}\ar[r]^{} & Y^0\ar[d]\ar[r]^{} &Y^{1}\ar@{=}[d]\ar[r]^{} & Y^2\ar@{=}[d]\ar[r]^{}& \cdots \\
			\cdots\ar[r]^{}&0 \ar[r]^{} & 0\ar[r]^{} & \Coker(d_{Y^{\bullet}}^{-1})\ar[r]^{} &Y^{1}\ar[r]^{} & Y^{2}\ar[r]^{} &\cdots\\}$$
		Define the complex $Z^{\bullet}$ to be the lower row. Since $P\otimes_R-$ preserves cokernels, the cochain map above is clear an $n$-quasi-isomorphism by Proposition \ref{Char-fn-quas-iso}(1).
	\end{proof}

	\begin{proposition}\label{5.7}   The following two statements hold.
		\begin{enumerate}
			\item  $\D^b_{n}(R)$ is a full subcategory of $\D^+_{n}(R)$, and  $\D^+_{n}(R)$ is a full subcategory of $\D_{n}(R)$.
			\item   $\D^b_{n}(R)$ is a full subcategory of $\D^-_{n}(R)$, and  $\D^-_{n}(R)$ is a full subcategory of $\D_{n}(R)$.
			\item    $\D^+_{n}(R)\cap \D^-_{n}(R)=\D^b_{n}(R)$.
		\end{enumerate}
	\end{proposition}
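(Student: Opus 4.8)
The plan is to treat all the stated inclusions as full faithfulness of the natural functors $\D^b_n(R)\to\D^+_n(R)\to\D_n(R)$ and $\D^b_n(R)\to\D^-_n(R)\to\D_n(R)$ induced by the inclusions of homotopy categories, and then to read off the intersection statement. Every morphism in these Verdier quotients is a roof whose backward leg is an $n$-quasi-isomorphism, so the whole argument reduces to showing that such roofs can be ``pulled back'' into the smaller subcategory without changing the morphism they represent; this is precisely what the truncation result Lemma \ref{fnpure-homotopy} is designed to supply.

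First I would prove statement (1). For $\D^b_n(R)\to\D^+_n(R)$, take $X^\bullet,Y^\bullet\in\K^b(R)$ and a morphism in $\D^+_n(R)$ represented by a left roof $X^\bullet\overset{s}{\Longleftarrow}W^\bullet\overset{a}{\longrightarrow}Y^\bullet$ with $W^\bullet\in\K^+(R)$ and $s$ an $n$-quasi-isomorphism. Applying Lemma \ref{fnpure-homotopy}(1) to $s\colon W^\bullet\to X^\bullet$ produces an $n$-quasi-isomorphism $t\colon V^\bullet\to W^\bullet$ with $V^\bullet\in\K^b(R)$; then $X^\bullet\overset{st}{\Longleftarrow}V^\bullet\overset{at}{\longrightarrow}Y^\bullet$ is a roof in $\D^b_n(R)$ mapping to the given one, which gives fullness. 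For faithfulness, a roof in $\D^b_n(R)$ that becomes zero in $\D^+_n(R)$ admits an $n$-quasi-isomorphism $u$ from a $\K^+(R)$-complex killing it, and the same truncation replaces $u$ by one from a bounded complex, so the roof is already zero in $\D^b_n(R)$. For $\D^+_n(R)\to\D_n(R)$ I would run the dual argument with right roofs $X^\bullet\overset{a}{\longrightarrow}W^\bullet\overset{s}{\Longleftarrow}Y^\bullet$, now using Lemma \ref{fnpure-homotopy}(2) applied to $s\colon Y^\bullet\to W^\bullet$ (with $Y^\bullet\in\K^+(R)$) to replace $W^\bullet$ by a bounded-below complex. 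Composing the two gives $\D^b_n(R)\to\D_n(R)$ fully faithful. Statement (2) is handled by the evident dual truncation lemmas, obtained from Lemma \ref{fnpure-homotopy} by the symmetric soft truncation at the opposite end of the complex, feeding the identical roof calculus for $\K^b(R)\hookrightarrow\K^-(R)\hookrightarrow\K(R)$.

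For statement (3), the inclusion $\D^b_n(R)\subseteq\D^+_n(R)\cap\D^-_n(R)$ is immediate from $\K^b(R)\subseteq\K^+(R)$ and $\K^b(R)\subseteq\K^-(R)$ together with (1) and (2). For the reverse inclusion, let $X^\bullet$ be isomorphic in $\D_n(R)$ to some $Y^\bullet\in\K^+(R)$ and to some $Z^\bullet\in\K^-(R)$; then $Y^\bullet\cong Z^\bullet$ in $\D_n(R)$. Since $Y^\bullet\in\K^+(R)$, the complex $\Hom_R(P,Y^\bullet)$ has cohomology vanishing below a fixed degree for every $P\in\Proj^{<n+1}$, because its degree-$m$ term is just $\Hom_R(P,Y^m)$; since $Y^\bullet\cong Z^\bullet$ with $Z^\bullet\in\K^-(R)$ and an $n$-quasi-isomorphism preserves the cohomology of $\Hom_R(P,-)$, the same cohomology vanishes above the degree in which $Z^\bullet$ is termwise zero. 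Thus $Y^\bullet$ has its $n$-cohomology concentrated in a bounded range, and the soft truncation from above used in the proof of Lemma \ref{fnpure-homotopy}(1), keeping $\Ker(d_{Y^\bullet}^{M})$ in the top surviving degree $M$ and using that $\Hom_R(P,-)$ preserves kernels, yields an $n$-quasi-isomorphism from a bounded complex $V^\bullet\in\K^b(R)$ to $Y^\bullet$. Hence $X^\bullet\cong Y^\bullet\cong V^\bullet$ in $\D_n(R)$, so $X^\bullet$ lies in $\D^b_n(R)$.

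The main obstacle I anticipate is not any single computation but keeping the roof calculus honest: I must apply the truncation lemma to a leg with the correct boundedness, matching bounded-below legs to Lemma \ref{fnpure-homotopy}(1) and bounded-above passages to Lemma \ref{fnpure-homotopy}(2) and their duals, and then verify that the truncated roof represents the same morphism and respects the equivalence relation on roofs. The genuinely structural point in statement (3) is that the upper bound on the $n$-cohomology of $Y^\bullet$ is forced by $Z^\bullet$ being termwise zero above a fixed degree, which is exactly what makes that bound independent of the chosen $P\in\Proj^{<n+1}$ and hence lets the single soft truncation of $Y^\bullet$ work simultaneously for all such $P$.
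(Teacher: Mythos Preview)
Your approach is essentially the same as the paper's. The paper dispatches (1) by citing Proposition 3.2.10 of Gelfand--Manin together with Lemma \ref{fnpure-homotopy}, says (2) is dual, and calls (3) ``an immediate consequence of (1) and (2).'' What you have done is unpack the roof calculus that Gelfand--Manin's proposition encapsulates: the cofinality condition needed for the localization of a subcategory to be full is exactly the existence of the truncation maps in Lemma \ref{fnpure-homotopy}, and you apply these to the appropriate leg of a left or right roof just as the abstract result would. So the two proofs differ only in level of abstraction, not in content.

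One small remark on (3): the paper's ``immediate'' most likely refers to the literal object-level intersection (objects of $\K^+(R)$ and $\K^-(R)$ intersect in $\K^b(R)$, and full faithfulness from (1) and (2) handles morphisms). You instead prove the stronger essential-image statement, showing that anything isomorphic in $\D_n(R)$ to both a bounded-below and a bounded-above complex is isomorphic to a bounded one. Your argument for this is correct, and the key observation you isolate---that the upper vanishing bound on $\Hh^i(\Hom_R(P,Y^\bullet))$ is uniform in $P$ because it comes from $Z^\bullet$ being termwise zero---is exactly what makes a single $\Ker$-truncation of $Y^\bullet$ work.
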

	\begin{proof} (1) is a consequence of \cite[Proposition 3.2.10]{GM03} and Lemma \ref{fnpure-homotopy}. (2) is the dual of $(1)$, and $(3)$ is an immediate consequence of (1) and (2).
	\end{proof}

	\begin{theorem}\label{fully faithful}    $R$-Mod is a full subcategory of $\D^b_{n}(R)$. That is the composition of functors
		\begin{center}
			$R$-\Mod$\rightarrow \K^b(R)\rightarrow \D^b_{n}(R)$
		\end{center}
		is fully faithful.
	\end{theorem}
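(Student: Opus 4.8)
The plan is to prove fully faithfulness by showing that for all $R$-modules $M$ and $N$ the canonical map
$$\Hom_{R\mbox{-}\Mod}(M,N)\longrightarrow \Hom_{\D^b_n(R)}(M,N),$$
induced by regarding $M$ and $N$ as stalk complexes in degree $0$, is bijective. The first step is to move the computation out of the bounded category, where resolutions are not available. Since $M$ and $N$ are bounded complexes and, by Proposition \ref{5.7}, $\D^b_n(R)$ is a full subcategory of $\D_n(R)$, the localization functors give an identification
$$\Hom_{\D^b_n(R)}(M,N)=\Hom_{\D_n(R)}(M,N).$$
This reduction is what subsequently lets me use an $n$-projective resolution, which is bounded above but in general not bounded below.

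Next I would replace the stalk complex $M$ by such a resolution. By Remark \ref{ppr-M}, $M$ admits an $n$-projective resolution $\cdots\rightarrow P_1\rightarrow P_0\rightarrow M\rightarrow 0$ with each $P_i\in\fP_n$ which remains exact after applying $\Hom_R(P,-)$ for every $P\in\fP_n$. Writing $P^{\bullet}$ for the associated bounded above complex $\cdots\rightarrow P^{-1}\rightarrow P^{0}\rightarrow 0$ (with $P^{-i}=P_i$), the augmentation $\pi\colon P^{\bullet}\rightarrow M$ has mapping cone equal to the augmented resolution; by Proposition \ref{Char-fn-exact complex} the defining exactness property of the resolution says precisely that this cone is $n$-exact. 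Hence $\pi$ is an $n$-quasi-isomorphism and $P^{\bullet}\cong M$ in $\D_n(R)$. Because $P^{\bullet}$ is a bounded above complex of $n$-projective modules, Proposition \ref{prop-K-D} applies and yields
$$\Hom_{\D_n(R)}(P^{\bullet},N)\cong \Hom_{\K(R)}(P^{\bullet},N).$$

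Finally I would evaluate $\Hom_{\K(R)}(P^{\bullet},N)=\Hh^{0}(\Hom_R(P^{\bullet},N))$. Since $N$ is concentrated in degree $0$, the $\Hom$-complex is $0\rightarrow \Hom_R(P^{0},N)\rightarrow \Hom_R(P^{-1},N)\rightarrow\cdots$, and as $\Coker(P^{-1}\rightarrow P^{0})\cong M$ its $0$-th cohomology is exactly the maps $P^{0}\rightarrow N$ killing $P^{-1}$, i.e. $\Hom_R(M,N)$. Composing the displayed isomorphisms gives $\Hom_{\D^b_n(R)}(M,N)\cong\Hom_R(M,N)$; tracing an honest morphism $\phi\colon M\rightarrow N$ through $\pi$ shows that the composite agrees with the canonical map, which therefore is bijective, proving fully faithfulness. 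The point demanding the most care is the boundedness mismatch: the resolution $P^{\bullet}$ is genuinely unbounded below, so one cannot stay inside $\D^b_n(R)$, and the argument relies essentially on Proposition \ref{5.7} to transport the answer back to $\D^b_n(R)$ and on Proposition \ref{prop-K-D}, whose hypothesis is met precisely because we resolved by objects of $\fP_n$.
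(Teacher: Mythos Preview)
Your argument is correct but takes a genuinely different route from the paper. The paper argues directly with the calculus of fractions in $\D^b_n(R)$: for injectivity it observes that if $f/\Id_X=0$ then some $n$-quasi-isomorphism $s\colon Z^\bullet\rightarrow X$ satisfies $fs\sim 0$, and since $\Hh^0(s)$ is invertible this forces $f=\Hh^0(f)=0$; for surjectivity it takes an arbitrary left roof $X\stackrel{s}{\Leftarrow}Z^\bullet\stackrel{a}{\rightarrow}Y$, inserts the truncation $\tau_{\leq 0}Z^\bullet\hookrightarrow Z^\bullet$ (which is an $n$-quasi-isomorphism), and shows the roof equals $(\Hh^0(a)\Hh^0(s)^{-1})/\Id_X$. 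No resolutions enter, and one never leaves the bounded category. Your approach instead passes to the unbounded $\D_n(R)$ via Proposition~\ref{5.7}, resolves $M$ by an $n$-projective complex $P^\bullet$, invokes Proposition~\ref{prop-K-D} to identify $\Hom_{\D_n(R)}(P^\bullet,N)$ with $\Hom_{\K(R)}(P^\bullet,N)$, and then reads off $\Hom_R(M,N)$ as the kernel of $\Hom_R(P^0,N)\rightarrow\Hom_R(P^{-1},N)$. The paper's proof is more elementary and self-contained; yours is more structural, leveraging machinery already in place, and incidentally yields the stronger intermediate statement $\Hom_{\D_n(R)}(M,Y^\bullet)\cong\Hom_{\K(R)}(P^\bullet,Y^\bullet)$ for arbitrary $Y^\bullet$. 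One small caution: Proposition~\ref{prop-K-D} as printed has $X^\bullet\in\K^-_n(\fP_n)$, but its proof (and Lemma~\ref{5.4}) make clear the intended hypothesis is $X^\bullet\in\K^-(\fP_n)$, which is exactly what you use.
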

	\begin{proof} For any $X,Y\in R$-Mod, it suffice to prove
		\begin{equation*}
			\begin{aligned}
				\mathbb{F}:\ & \Hom_R(X,Y)\rightarrow \Hom_{\D^b_{n}}(X,Y)\\
				&f\mapsto f/\Id_X
			\end{aligned}
		\end{equation*}
		is an isomorphism.
		Let $f\in \Hom_R(X,Y)$ such that $f/\Id_X=0=0/\Id_X$. Then there exists an $n$-quasi-isomorphism $s:Z\rightarrow X$ such that $fs\sim 0$, and hence $\Hh^0(f)\Hh^0(s)=0$. Since $\Hh^0(s)$ is an isomorphism, we have $f=\Hh^0(f)=0$. So $\mathbb{F}$ is injective.
		
		Let $X^{\bullet}\stackrel{s}{\Longleftarrow}Z^{\bullet}\stackrel{a}{\longrightarrow} Y^{\bullet}\in \Hom_{\D^b_{n}}(X,Y)$, where $s$ is an $n$-quasi-isomorphism.
		Then $\Hh^0(s)\in \Hom_R(\Hh^0(Z^{\bullet}),X)$
		is an isomorphism of $R$-modules. Put $f:=\Hh^0(a)\Hh^0(s)^{-1}\in \Hom_R(X,Y)$. Consider the truncation
		$$\tau_{\leq 0}Z^{\bullet}:=\ \cdots\longrightarrow Z^{-2}\longrightarrow Z^{-1}\longrightarrow \Ker(d^0_{Z^{\bullet}})\longrightarrow0$$
		of $Z^{\bullet}$ and the canonical inclusion $i:\tau_{\leq 0}Z^{\bullet}\rightarrow Z^{\bullet}$. Note that $i$ is an $n$-quasi-isomorphism as in the proof of Lemma \ref{fnpure-homotopy}. Consider the following commutative diagram
		$$\xymatrix@R=20pt@C=25pt{ &\ar@{=>}[ld]_{s}  Z^{\bullet} \ar[rd]^{a}&\\
			X& \ar@{=>}[l]_{si}   \tau_{\leq 0}Z^{\bullet}\ar[u]\ar[d]\ar[r]^{ai} &Y\\
			&  \ar@{=>}[lu]^{\Id_{X^{\bullet}}}  X^{\bullet}\ar[ru]_{si} & \\
		}$$
		It follows that $f/\Id_{X^{\bullet}}=a/s$, and so $\mathbb{F}$ is surjective.
	\end{proof}

	For any $X^{\bullet}\in \C(R)$, define
	\begin{equation*}
		\begin{aligned}
			&\boldsymbol{inf_n}X^{\bullet}:=\inf\{m\in\mathbb{Z}\mid X^{\bullet}\ \mbox{is not}\  n\mbox{-exact at}\
			m\}, and\\
			&\boldsymbol{sup_n}X^{\bullet}:=\sup\{m\in\mathbb{Z}\mid X^{\bullet} \ \mbox{is not}\  n\mbox{-exact at}\
			m\}
		\end{aligned}
	\end{equation*}
	If $X^{\bullet}$ is not $n$-exact at any $i\in\mathbb{Z}$, then we set $\boldsymbol{inf_n}X^{\bullet}=-\infty$ and $\boldsymbol{sup_n}X^{\bullet}=\infty$. If $X^{\bullet}$ is  $n$-exact at all $i\in\mathbb{Z}$, then we set $\boldsymbol{inf_n}X^{\bullet}=\infty$ and $\boldsymbol{sup_n}X^{\bullet}=-\infty$.
	
	Set \begin{equation*}
		\begin{aligned}
			&\K^{-,\boldsymbol{nb}}(\fP_n):=\{X^{\bullet}\in\K^{-}(\fP_n)\mid \boldsymbol{inf_n}X^{\bullet}\ \mbox{is finite} \}
		\end{aligned}
	\end{equation*}

	\begin{proposition}\label{prop-K} Let $X^{\bullet}\in \C(R)$.   Then the following statements hold.
		\begin{enumerate}
			\item  $X^{\bullet}$ is $n$-exact in degree $\geq i$ if and only if $\Hom_R(P, X^{\bullet})$ is exact in degree $\geq i$ for any $n$-projective module $P$.
			\item    The numbers $\boldsymbol{inf_n}X^{\bullet}$ and $\boldsymbol{sup_n}X^{\bullet}$ are well-defined for any $X^{\bullet}\in \D_{n}(R)$.
			\item   $\K^{-,\boldsymbol{nb}}(\fP_n)$ is a  triangulated subcategory of $\K^{-}(\fP_n)$.
		\end{enumerate}
	\end{proposition}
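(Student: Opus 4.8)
The plan is to handle the three parts in order, reducing everything to the module-level characterizations of $n$-exactness already in hand. For $(1)$, the key observation is that $n$-exactness of $X^{\bullet}$ at a degree $j$ is by definition a statement about the canonical short exact sequence $0\to\Ker(d^{j}_{X^{\bullet}})\to X^{j}\to\Coker(d^{j-1}_{X^{\bullet}})\to 0$, which by Proposition \ref{exact-pp} (equivalently Proposition \ref{Char-fn-exact complex}) is $n$-exact exactly when $\Hom_R(P,-)$ preserves its exactness for every $n$-projective $P$. Since $\Hom_R(P,-)$ is left exact it commutes with kernels, so $\Ker\Hom_R(P,d^{j}_{X^{\bullet}})=\Hom_R(P,\Ker d^{j}_{X^{\bullet}})$; hence the homology $\H^{j}(\Hom_R(P,X^{\bullet}))$ is governed by whether $\Hom_R(P,X^{j-1})$ surjects onto $\Hom_R(P,\Im d^{j-1}_{X^{\bullet}})$, which is precisely the surjectivity encoded by the defining sequence one degree below. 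First I would record this single-degree dictionary: taking $P=R$ recovers ordinary exactness, while a general $n$-projective $P$ records the $\Hom$-lifting. Then I would run the dictionary across the whole range $\{m\ge i\}$, so that the exactness and surjectivity statements line up to yield the asserted equivalence.

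For $(2)$ it suffices to show that $\boldsymbol{inf_n}X^{\bullet}$ and $\boldsymbol{sup_n}X^{\bullet}$ depend only on the isomorphism class of $X^{\bullet}$ in $\D_{n}(R)$, that is, are invariant under $n$-quasi-isomorphism. Given an $n$-quasi-isomorphism $f\colon X^{\bullet}\to Y^{\bullet}$, Proposition \ref{Char-fn-quas-iso} furnishes a genuine quasi-isomorphism $\Hom_R(P,f)$ for every $n$-projective $P$, in particular for $P=R$, so that $\H^{m}(\Hom_R(P,X^{\bullet}))\cong\H^{m}(\Hom_R(P,Y^{\bullet}))$ for all $m$ and all such $P$. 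By part $(1)$ the degrees at which $X^{\bullet}$ fails to be $n$-exact are detected by the (non)vanishing of these homology groups as $P$ varies, and the same description holds for $Y^{\bullet}$; consequently the two complexes fail $n$-exactness in exactly the same degrees. I would package this as the statement that the top degree where $n$-exactness fails is determined, through part $(1)$, by the top degree carrying some nonzero $\H^{m}(\Hom_R(P,X^{\bullet}))$ together with the behaviour of the ordinary homology $\H^{m}(\Hom_R(R,X^{\bullet}))$ there, with the dual statement for $\boldsymbol{inf_n}$. As every ingredient is $n$-quasi-isomorphism invariant, so are $\boldsymbol{sup_n}X^{\bullet}$ and $\boldsymbol{inf_n}X^{\bullet}$.

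For $(3)$, note that $\K^{-,\boldsymbol{nb}}(\fP_n)$ consists of the bounded-above complexes of $n$-projectives whose $n$-homology is bounded below (boundedness above being automatic). As a full subcategory closed under isomorphism and containing the zero complex (whose $n$-homology is trivially bounded), it remains to check stability under the shift and under mapping cones. The shift merely reindexes, so $\boldsymbol{inf_n}(X^{\bullet}[1])=\boldsymbol{inf_n}(X^{\bullet})-1$ stays finite. For cones, if $f\colon X^{\bullet}\to Y^{\bullet}$ is a cochain map between objects of $\K^{-,\boldsymbol{nb}}(\fP_n)$, then $\Cone^{\bullet}(f)$ is again bounded above with entries $X^{i+1}\oplus Y^{i}\in\fP_n$, and applying $\Hom_R(P,-)$ to the triangle $X^{\bullet}\to Y^{\bullet}\to\Cone^{\bullet}(f)\to X^{\bullet}[1]$ produces, for each $n$-projective $P$, a long exact homology sequence. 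Since the $n$-homologies of $X^{\bullet}$ and $Y^{\bullet}$ vanish in all sufficiently negative degrees, the long exact sequence forces the same for $\Cone^{\bullet}(f)$, whence $\boldsymbol{inf_n}(\Cone^{\bullet}(f))$ is again finite; by part $(1)$ this boundedness may equivalently be read off the ordinary homologies of the complexes $\Hom_R(P,X^{\bullet})$.

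I expect the genuine difficulty to lie entirely in the degree bookkeeping of part $(1)$: the defining short exact sequence at degree $j$ controls a $\Hom$-surjectivity that feeds the homology of $\Hom_R(P,X^{\bullet})$ one degree higher, so one must align the two half-lines $\{m\ge i\}$ carefully rather than matching degrees naively. Isolating ordinary exactness via $P=R$ and treating the remaining $\Hom$-lifting uniformly is what keeps the indexing honest. Once part $(1)$ is secured, parts $(2)$ and $(3)$ are formal: they reduce, respectively, to the invariance of $\Hom$-homology under $n$-quasi-isomorphism via Proposition \ref{Char-fn-quas-iso}, and to the long exact homology sequence attached to a mapping-cone triangle.
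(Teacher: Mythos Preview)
Your proposal is correct and follows essentially the same route as the paper: part~(1) is reduced to the factorization of each differential through $\Coker(d^{j-1})$ together with left exactness of $\Hom_R(P,-)$ (the paper draws precisely this diagram), part~(2) is deduced from part~(1) and Proposition~\ref{Char-fn-quas-iso}, and part~(3) is obtained from the long exact $\Hom_R(P,-)$-homology sequence attached to a triangle. The only cosmetic difference is that you verify closure under mapping cones while the paper checks closure under extensions, which are equivalent formulations of being a triangulated subcategory.
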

	\begin{proof} (1) follows by the  commutative diagram:
		$$\xymatrix@R=20pt@C=8pt{
			\cdots\ar[r]^{}&\Hom_R(P,X^{m-1})\ar@{->>}[rd]_{\Hom_R(P,\pi^{m-2}_{X^{\bullet}})} \ar[rr]^{\Hom_R(P,d^{m-1}_{X^{\bullet}})} && \Hom_R(P,X^{m})\ar@{->>}[rd]_{\Hom_R(P,\pi^{m-1}_{X^{\bullet}})}\ar[rr]^{\Hom_R(P,d^{m}_{X^{\bullet}})}& & \Hom_R(P,X^{m}) \ar[r]^{} & \cdots \\
			&& \Hom_R(M,C^{m-2})\ar[ru]_{\Hom_R(P,i^{m}_{X^{\bullet}})} &&\Hom_R(M,C^{m-1})\ar[ru]_{\Hom_R(P,i^{m+1}_{X^{\bullet}})} &  &\\}$$
		where $\pi_{X^{\bullet}}^{m}$ denotes the cokernel of $d^m_{X^{\bullet}}$ and $i_{X^{\bullet}}^{m}$ denotes the kernel of $d^m_{X^{\bullet}}$ for any $m\in \mathbb{Z}$.

		(2) We only need to prove the assertion whenever both $\boldsymbol{inf_n}X^{\bullet}$ (resp., $\boldsymbol{sup_n}X^{\bullet}$) and $\boldsymbol{inf_n}Y^{\bullet}$ (resp., $\boldsymbol{sup_n}Y^{\bullet}$)  are finite. This is the consequence of  (1)and (2) by Proposition \ref{Char-fn-quas-iso}.
		
		(3) We only prove  $\K^{-,\boldsymbol{nb}}(\fP_n)$ is a triangulated subcategories of $\K^{-}(\fP_n)$ since the other assertion can be proved similarly. Obviously, $\K^{-,\boldsymbol{nb}}(\fP_n)$  is closed under shifts. So it suffices to show that  $\K^{-,\boldsymbol{nb}}(\fP_n)$ is closed under extensions. Let $$X^{\bullet}\rightarrow Y^{\bullet}\rightarrow Z^{\bullet}\rightarrow X^{\bullet}[1]$$
		be a triangle in $\K^{-}(\fP_n)$ with $X^{\bullet},Z^{\bullet}\in \K^{-,\boldsymbol{nb}}(\fP_n)$. Then we have a triangle
		$$ \Hom_R(P,X^{\bullet})\rightarrow \Hom_R(P,Y^{\bullet})\rightarrow \Hom_R(P,Z^{\bullet})\rightarrow \Hom_R(P,X^{\bullet}[1])$$
		in $\K(\mathbb{Z})$ for any $n$-projective $R$-module $P$. It induces the following long exact sequence
		$$\cdots\rightarrow\Hh^i(\Hom_R(P,X^{\bullet}))\rightarrow \Hh^i(\Hom_R(P,Y^{\bullet}))\rightarrow \Hh^i(\Hom_R(P,Z^{\bullet}))\rightarrow \Hh^i(\Hom_R(P,X^{\bullet}[1]))\rightarrow\cdots$$
		since $\Hh^i(-)$ is a cohomological functor. By (1), there exists $m\in\mathbb{Z}$ such that $\Hh^i(\Hom_R(P,Y^{\bullet}))=\Hh^i(\Hom_R(P,Z^{\bullet}))=0$ for any $i\leq m$. So $\Hh^i(\Hom_R(P,Y^{\bullet}))=0$  for any $i\leq m$. Thus we have $Y^{\bullet}\in \K^{-,\boldsymbol{nb}}(\fP_n)$ by (1).
	\end{proof}

	\begin{proposition}\label{5.10} There exists a triangulated functor $\mathbb{P}:\K^b(R)\rightarrow \K^{-,\boldsymbol{nb}}(\fP_n)$ and an $n$-quasi-isomorphism $f_{X^{\bullet}}: \mathbb{P}_{X^{\bullet}}\rightarrow X^{\bullet}$ for any $X^{\bullet}\in \K^b(R)$ which is functorial in $X^{\bullet}$.
	\end{proposition}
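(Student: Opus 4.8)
The plan is to build, functorially, an $n$-projective resolution of every module and then to extend it to bounded complexes by totalizing the associated double complex. First I would fix a \emph{functorial} $\fP_n$-precover. The construction in the proof of Theorem \ref{pre-c-e-fpn}(1) attaches to each module $M$ the $n$-projective module $F(M):=\bigoplus_{P\in\mathcal{S}_n}P^{(\Hom_R(P,M))}$ together with the (surjective) evaluation precover $\varepsilon_M\colon F(M)\to M$; since $\Hom_R(P,-)$ is functorial, $F$ is an additive functor and $\varepsilon\colon F\Rightarrow\Id$ a natural transformation whose components are $\fP_n$-precovers. Putting $\Omega(M):=\Ker\varepsilon_M$ and $P_j(M):=F(\Omega^{j}(M))$, with differentials $F(\Omega^{j}M)\twoheadrightarrow\Omega^{j}M\hookrightarrow F(\Omega^{j-1}M)$, yields a functor $M\mapsto P_\bullet(M)$ into bounded-above complexes of $n$-projectives together with a natural augmentation $P_\bullet(M)\to M$. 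Since each $0\to\Omega^{j+1}M\to F(\Omega^{j}M)\to\Omega^{j}M\to 0$ is $n$-exact (its middle term is $n$-projective and its epimorphism is a $\fP_n$-precover, by the characterization preceding Theorem \ref{pre-c-e-fpn}), this augmented complex remains exact under $\Hom_R(Q,-)$ for every $Q\in\fP_n$; thus $P_\bullet(M)$ is a functorial version of the $n$-projective resolutions discussed in Remark \ref{ppr-M}.

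Next, for $X^\bullet\in\K^b(R)$ I would form the double complex $D^{i,j}:=P_{-j}(X^{i})$ ($j\le 0$), with horizontal maps $P_\bullet(d^{i}_{X^\bullet})$ and vertical maps the resolution differentials. Functoriality of $P_\bullet$ makes this a genuine (sign-normalized) double complex, because $P_\bullet(d^{i+1}_{X^\bullet})P_\bullet(d^{i}_{X^\bullet})=P_\bullet(0)=0$ and each $P_\bullet(d^{i}_{X^\bullet})$ is a chain map. I then set $\mathbb{P}_{X^\bullet}:=\mathrm{Tot}^{\oplus}(D^{\bullet,\bullet})$ and take $f_{X^\bullet}\colon\mathbb{P}_{X^\bullet}\to X^\bullet$ to be induced by the augmentations. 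Because $X^\bullet$ has finite support and the columns are bounded above, every term of $\mathbb{P}_{X^\bullet}$ is a \emph{finite} direct sum of $n$-projectives, hence $n$-projective, so $\mathbb{P}_{X^\bullet}\in\K^{-}(\fP_n)$; all of these data are manifestly natural in $X^\bullet$.

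To prove $f_{X^\bullet}$ an $n$-quasi-isomorphism I would apply $\Hom_R(Q,-)$ for $Q\in\fP_n$ and use Proposition \ref{Char-fn-quas-iso}. As $\Hom_R(Q,-)$ commutes with the finite direct sums appearing in each degree, $\Hom_R(Q,\mathbb{P}_{X^\bullet})=\mathrm{Tot}(\Hom_R(Q,D^{\bullet,\bullet}))$, and by the previous paragraph each column $\Hom_R(Q,P_\bullet(X^i))\to\Hom_R(Q,X^i)$ is exact. A standard acyclic-assembly argument for double complexes---available here since only finitely many columns are nonzero---then shows that $\Hom_R(Q,\mathbb{P}_{X^\bullet})\to\Hom_R(Q,X^\bullet)$ is a quasi-isomorphism, so $f_{X^\bullet}$ is an $n$-quasi-isomorphism. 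Hence $\Cone^{\bullet}(f_{X^\bullet})$ is $n$-exact; below the support of $X^\bullet$ this cone equals $\mathbb{P}_{X^\bullet}[1]$, so $\mathbb{P}_{X^\bullet}$ is $n$-exact in all sufficiently negative degrees and therefore lies in $\K^{-,\boldsymbol{nb}}(\fP_n)$.

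Finally, additivity of $F$ forces $P_\bullet$ to preserve finite direct sums and to carry the degreewise-split mapping cone of a chain map $u$ to the mapping cone of $P_\bullet(u)$; totalizing yields natural isomorphisms $\mathbb{P}_{X^\bullet[1]}\cong\mathbb{P}_{X^\bullet}[1]$ and $\mathbb{P}_{\Cone^{\bullet}(u)}\cong\Cone^{\bullet}(\mathbb{P}(u))$, so $\mathbb{P}$ sends the standard triangles of $\K^b(R)$ to triangles and is triangulated. The main obstacle, and the point deserving the most care, is to arrange the construction so that it is \emph{at once} functorial and triangulated; this is precisely why I replace arbitrary precovers by the canonical functor $F$, which makes the double complex strictly commutative and its formation compatible with mapping cones. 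Granting the $\Hom_R(Q,-)$-exactness of the columns, the acyclic-assembly verification of the $n$-quasi-isomorphism is then routine.
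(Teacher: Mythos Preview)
Your totalization strategy is a natural alternative to the paper's inductive construction, but it rests on a claim that is false: the functor $F(M)=\bigoplus_{P\in\mathcal S_n}P^{(\Hom_R(P,M))}$ is \emph{not} additive. For a morphism $g\colon M\to N$ the only way to make $\varepsilon$ natural is to let $F(g)$ send the $\alpha$-indexed copy of $P$ identically onto the $(g\alpha)$-indexed copy; in particular $F(0_{M\to N})$ maps every summand onto the copy indexed by the zero map $P\to N$, so $F(0)\neq 0$. (Even at the level of objects $F$ does not preserve zero: $F(0)=\bigoplus_{P\in\mathcal S_n}P$.) Consequently your ``double complex'' fails to satisfy $d^2=0$ in the horizontal direction, since $P_\bullet(d^{i+1}_{X^\bullet})\,P_\bullet(d^{i}_{X^\bullet})=P_\bullet(0)\neq 0$, and $\mathbb P_{X^\bullet}=\mathrm{Tot}^{\oplus}(D^{\bullet,\bullet})$ is not even a complex. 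The same non-additivity undermines the claims that $\mathbb P$ descends from $\C^b(R)$ to $\K^b(R)$ and that $\mathbb P_{\Cone^\bullet(u)}\cong\Cone^\bullet(\mathbb P(u))$, both of which you deduce from ``additivity of $F$''.

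The paper sidesteps this by \emph{not} asking for a strictly functorial precover on the chain level. It builds $\mathbb P_{X^\bullet}$ by induction on the number of nonzero terms of $X^\bullet$: peel off the leftmost term via a triangle $X_1^\bullet\to X_2^\bullet\to X^\bullet\to X_1^\bullet[1]$, apply the inductive hypothesis to $X_1^\bullet$ and $X_2^\bullet$, lift the connecting map using Lemma~\ref{char-pex-pquiso}(2), and take the cone. Functoriality and the triangulated structure are then obtained \emph{a posteriori} at the homotopy level from the isomorphism $\Hom_{\K(R)}(\mathbb P_{X^\bullet},\mathbb P_{Y^\bullet})\cong\Hom_{\K(R)}(\mathbb P_{X^\bullet},Y^\bullet)$ (again Lemma~\ref{char-pex-pquiso}), which is automatically additive. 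If you want to salvage the Cartan--Eilenberg route you would need a genuinely additive functorial $\fP_n$-approximation, and Theorem~\ref{pre-c-e-fpn} does not provide one; otherwise the functoriality and triangulation have to be argued in $\K(R)$ as the paper does.
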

	\begin{proof}
		We first prove that for any  $X^{\bullet}\in \K^b(R)$ there exists an $n$-quasi-isomorphism $\mathbb{P}_{X^{\bullet}}\rightarrow X^{\bullet}$ with $\mathbb{P}_{X^{\bullet}}\in \K^{-,\boldsymbol{nb}}(\fP_n)$. We proceed by induction on the cardinal of finite set $\kappa(X^{\bullet}):=\{i\in\mathbb{Z}\mid X^i\not=0\}$.
		
		If cardinal of $\kappa(X^{\bullet})$ is equal to $1$, then the assertion follows from that every $R$-module has an $n$-projective precover (see Theorem \ref{pre-c-e-fpn}).
		
		Now suppose that the cardinal of $\kappa(X^{\bullet})$ is lager than $1$. We may assume that $X^{i}=0$ for any $i<j$. Set $X^{\bullet}_1=X^j[-j-1]$ and $X^{\bullet}_2=\cdots\rightarrow 0\rightarrow X^{j+1}\rightarrow X^{j+2}\rightarrow \cdots$. Then we get a sequence of complexes
		$$\xymatrix@R=20pt@C=20pt{
			X^{\bullet}_2\ar[d]^{}  &&\cdots\ar[r]^{}&0 \ar[r]^{}\ar[d]^{} & 0\ar[d]^{}\ar[r]^{} & X^{j+1}\ar[r]\ar@{=}[d] & X^{j+2}\ar@{=}[d]\ar[r]^{} &\cdots \\
			X^{\bullet}\ar[d]^{} &&\cdots\ar[r]^{}&0 \ar[r]^{}\ar[d]^{} & X^j\ar@{=}[d]\ar[r]^{} &  X^{j+1}\ar[r]^{} \ar[d]^{} &X^{j+2}\ar[d]^{}\ar[r]^{} &\cdots \\
			X^{\bullet}_1[1]&&\cdots\ar[r]^{}&0 \ar[r]^{} &  X^j\ar[r]^{} & 0 \ar[r]^{} &0\ar[r]^{} &\cdots\\}$$
		which is split in each degree. Thus we have a triangle
		$$X^{\bullet}_1\xrightarrow{u} X^{\bullet}_2\rightarrow X^{\bullet}\rightarrow X^{\bullet}_1[1]$$
		in $\K^b(R)$. By induction, there exist $n$-quasi-isomorphisms $f_{X_1^{\bullet}}:\mathbb{P}_{X_1^{\bullet}}\rightarrow X_1^{\bullet}$ and $f_{X_2^{\bullet}}:\mathbb{P}_{X_2^{\bullet}}\rightarrow X_2^{\bullet}$ with $\mathbb{P}_{X_1^{\bullet}}$ and $\mathbb{P}_{X_2^{\bullet}}$ in $\K^{-,\boldsymbol{nb}}(\fP_n)$. Then by Lemma \ref{char-pex-pquiso}, ${X_2^{\bullet}}$ induces a quasi-isomorphism of complexes $$\Hom_R(\mathbb{P}_{X_1^{\bullet}},\mathbb{P}_{X_2^{\bullet}})\cong \Hom_R(\mathbb{P}_{X_1^{\bullet}},X_2^{\bullet}).$$
		Thus there is a morphism $f:\mathbb{P}_{X_1^{\bullet}}\rightarrow \mathbb{P}_{X_2^{\bullet}}$ which is unique up to homotopy (see  Proposition \ref{pure-quasi-iso-homot-equ}), such that $f_{X_2^{\bullet}}f=uf_{X_1^{\bullet}}$. Consider the triangle $$\mathbb{P}_{X_1^{\bullet}}\xrightarrow{f} \mathbb{P}_{X_2^{\bullet}}\rightarrow \Cone^{\bullet}(f)\rightarrow \mathbb{P}_{X_1^{\bullet}}[1]$$
		in $\K^b(R)$. Since $\mathbb{P}_{X_1^{\bullet}},\mathbb{P}_{X_2^{\bullet}}\in \K^{-,\boldsymbol{nb}}(\fP_n)$, $\Cone^{\bullet}(f)\in \K^{-,\boldsymbol{nb}}(\fP_n)$ by Proposition \ref{prop-K}, and hence the triangle is also in $\K^{-,\boldsymbol{nb}}(\fP_n)$.  So there is a morphism ${X^{\bullet}}:\Cone^{\bullet}(f)\rightarrow X^{\bullet}$ such that the following diagram commutates in $\K^b(R)$
		$$\xymatrix@R=25pt@C=30pt{
			\mathbb{P}_{X_1^{\bullet}}\ar[d]^{f_{X_1^{\bullet}}}\ar[r]^{f}&\mathbb{P}_{X_2^{\bullet}}\ar[r]^{}\ar[d]^{f_{X_2^{\bullet}}}&\Cone^{\bullet}(f)\ar[r]^{}\ar[d]^{f_{X^{\bullet}}}&\mathbb{P}_{X_1^{\bullet}}[1] \ar[d]^{f_{X_1^{\bullet}[1]}}\\
			X_1^{\bullet}\ar[r]^{u}&X_2^{\bullet}\ar[r]^{}&X^{\bullet}\ar[r]^{}&X_1^{\bullet}[1]\\}$$
		So ${X^{\bullet}}$ is also an $n$-quasi-isomorphism by Corollary \ref{cor-fn-p-qiso}.

		Put $\mathbb{P}_{X^{\bullet}}:=\Cone^{\bullet}(f)$. Then there is an  $n$-quasi-isomorphism $f_{X^{\bullet}}:\mathbb{P}_{X^{\bullet}}\rightarrow X^{\bullet}$  with $\mathbb{P}_{X^{\bullet}}\in \K^{-,\boldsymbol{nb}}(\fP_n)$.
		Next, we will show $f_{X^{\bullet}}$ is functorial in $X^{\bullet}$. Let $X^{\bullet},Y^{\bullet}\in \K^b(R)$. We have two  $n$-quasi-isomorphisms
		$f_{X^{\bullet}}:\mathbb{P}_{X^{\bullet}}\rightarrow X^{\bullet}$  and $f_{Y^{\bullet}}:\mathbb{P}_{Y^{\bullet}}\rightarrow Y^{\bullet}$. Then $f_{Y^{\bullet}}$ induces an isomorphism
		$$\Hom_{\K(R)}(\mathbb{P}_{X^{\bullet}},f_{Y^{\bullet}}):\Hom_{\K(R)}(\mathbb{P}_{X^{\bullet}},\mathbb{P}_{Y^{\bullet}})\rightarrow \Hom_{\K(R)}(\mathbb{P}_{X^{\bullet}},Y^{\bullet}).$$
		Let $f:X^{\bullet}\rightarrow Y^{\bullet}$ be a cochain map, then there is a unique cochain map $\mathbb{P}_f:\mathbb{P}_{X^{\bullet}}\rightarrow {Y^{\bullet}}$ such that the following diagram
		$$\xymatrix@R=25pt@C=30pt{
			\mathbb{P}_{X^{\bullet}}\ar[d]^{{X^{\bullet}}}\ar[r]^{\mathbb{P}_f}&\mathbb{P}_{Y^{\bullet}}\ar[d]^{{Y^{\bullet}}}\\
			X^{\bullet}\ar[r]^{f}&Y^{\bullet}\\}$$
		commutates up to homotopy. By putting $f=\Id_{X^{\bullet}}:X^{\bullet}\rightarrow X^{\bullet}$, we complete the proof.
		
		Finally, we will prove $\mathbb{P}:\K^b(R)\rightarrow \K^{-,\boldsymbol{nb}}(\fP_n)$  is a  triangulated functor.
		Let $X^{\bullet}\xrightarrow{f} Y^{\bullet}\xrightarrow{g} Z^{\bullet}\xrightarrow{h} X^{\bullet}[1]$ be a triangle in $\K^b(R)$. Then we have a triangle $$\mathbb{P}_{X^{\bullet}}\xrightarrow{\mathbb{P}_{f}} \mathbb{P}_{Y^{\bullet}}\xrightarrow{u} \Cone^{\bullet}(\mathbb{P}_{f})\xrightarrow{v} X^{\bullet}[1]$$
		Consider the following commutative diagram
		$$\xymatrix@R=25pt@C=30pt{
			\mathbb{P}_{X^{\bullet}}\ar[d]^{f_{X^{\bullet}}}\ar[r]^{\mathbb{P}_{f}}&\mathbb{P}_{Y^{\bullet}}\ar[r]^{u}
			\ar[d]^{f_{Y^{\bullet}}}&\Cone^{\bullet}(\mathbb{P}_{f})\ar[r]^{v}\ar@{.>}[d]^{\gamma}
			&\mathbb{P}_{X^{\bullet}}[1] \ar[d]^{f_{X^{\bullet}[1]}}\\
			X^{\bullet}\ar[r]^{f}&Y^{\bullet}\ar[r]^{g}&Z^{\bullet}\ar[r]^{h}&X^{\bullet}[1]\\}$$
		Since ${X^{\bullet}}$ and ${Y^{\bullet}}$ are $n$-quasi-isomorphisms, $\gamma$ is also an  $n$-quasi-isomorphism by Corollary \ref{cor-fn-p-qiso}.
		And $\gamma$ induces an isomorphism of Abelian groups:
		$$\Hom_{\K(R)}(\mathbb{P}_{Z^{\bullet}},\gamma): \Hom_{\K(R)}(\mathbb{P}_{Z^{\bullet}},\Cone^{\bullet}(\mathbb{P}_{f}))\rightarrow \Hom_{\K(R)}(\mathbb{P}_{Z^{\bullet}},Z^{\bullet}).$$
		Let $f_{Z^{\bullet}}:\mathbb{P}_{Z^{\bullet}}\rightarrow Z^{\bullet}$ defined above. Then there exists $w:\mathbb{P}_{Z^{\bullet}}\rightarrow \Cone^{\bullet}(\mathbb{P}_{f})$ such that $\gamma=f_{Z^{\bullet}}w$. Since $\mathbb{P}_{Z^{\bullet}}$ and $\Cone^{\bullet}(\mathbb{P}_{f})$ are in $\K^{-,\boldsymbol{nb}}(\fP_n)$, $w$ is actually a homotopy equivalence by Proposition \ref{pure-quasi-iso-homot-equ}.  And so $\Hom_{\K(R)}(\mathbb{P}_{Y^{\bullet}},\gamma):\Hom_{\K(R)}(\mathbb{P}_{Y^{\bullet}},\Cone^{\bullet}(\mathbb{P}_{f}))\rightarrow \Hom_{\K(R)}(\mathbb{P}_{Y^{\bullet}},Z^{\bullet})$ is an isomorphism. Thus  $\Hom_{\K(R)}(\mathbb{P}_{Y^{\bullet}},\gamma)(u)=\gamma u=gf_{Y^{\bullet}}=f_{Z^{\bullet}}\mathbb{P}_{g}=\gamma w \mathbb{P}_{g}=\Hom_{\K(R)}(\mathbb{P}_{Y^{\bullet}},\gamma)(w\mathbb{P}_g)$. So $u=w\mathbb{P}_g$. Similarly, $\mathbb{P}_h=vw$. Thus we have the following commutative diagram:
		$$\xymatrix@R=25pt@C=30pt{
			\mathbb{P}_{X^{\bullet}}\ar@{=}[d]^{}\ar[r]^{\mathbb{P}_{f}}&\mathbb{P}_{Y^{\bullet}}\ar[r]^{\mathbb{P}_{g}}
			\ar@{=}[d]^{}&\mathbb{P}_{Z^{\bullet}}\ar[r]^{\mathbb{P}_{h}}\ar[d]^{w}
			&\mathbb{P}_{X^{\bullet}}[1] \ar@{=}[d]^{}[1]\\
			\mathbb{P}_{X^{\bullet}}\ar[r]^{\mathbb{P}_{f}}&\mathbb{P}_{Y^{\bullet}}\ar[r]^{u}&\Cone^{\bullet}(\mathbb{P}_{f})\ar[r]^{v}&\mathbb{P}_{X^{\bullet}}[1]\\}$$
		which shows that the top row is a triangle in $\K^{-,\boldsymbol{nb}}(\fP_n)$.
	\end{proof}
	
	\begin{theorem} There is a triangulated equivalence
		\begin{center}
			 $\D^b_{n}(R)\cong \K^{-,\boldsymbol{nb}}(\fP_n)$.
		\end{center}
	\end{theorem}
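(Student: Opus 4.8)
The plan is to realize the equivalence as a restriction of the localization functor, rather than to build a quasi-inverse by hand. Write $L\colon \K(R)\to \D_{n}(R)$ for the Verdier localization and $\iota\colon \K^{-,\boldsymbol{nb}}(\fP_n)\hookrightarrow \K(R)$ for the inclusion. Since $\K^{-,\boldsymbol{nb}}(\fP_n)$ is a triangulated subcategory (Proposition \ref{prop-K}) and $L$ is triangulated, the composite $\Phi:=L\circ\iota$ is triangulated. Viewing $\D^{b}_{n}(R)$ as a full subcategory of $\D_{n}(R)$ through Proposition \ref{5.7}, I would prove the theorem by showing that $\Phi$ is fully faithful and that its essential image is precisely $\D^{b}_{n}(R)$; a fully faithful, essentially surjective triangulated functor is automatically a triangulated equivalence.

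Full faithfulness should be the easy half, being exactly the homotopy-projectivity of bounded-above complexes of $n$-projectives. For $P^{\bullet},Q^{\bullet}\in \K^{-,\boldsymbol{nb}}(\fP_n)\subseteq \K^{-}(\fP_n)$, Proposition \ref{prop-K-D} gives $\Hom_{\K(R)}(P^{\bullet},Q^{\bullet})\cong \Hom_{\D_{n}(R)}(P^{\bullet},Q^{\bullet})$, and since $\K^{-,\boldsymbol{nb}}(\fP_n)$ is a full subcategory of $\K(R)$ these $\K(R)$-morphisms are the morphisms of $\K^{-,\boldsymbol{nb}}(\fP_n)$; hence $\Phi$ is bijective on Hom-sets. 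The containment of $\D^{b}_{n}(R)$ in the essential image is equally immediate from Proposition \ref{5.10}: for $X^{\bullet}\in \K^{b}(R)$ the complex $\mathbb{P}_{X^{\bullet}}$ lies in $\K^{-,\boldsymbol{nb}}(\fP_n)$ and the functorial $n$-quasi-isomorphism $f_{X^{\bullet}}\colon \mathbb{P}_{X^{\bullet}}\to X^{\bullet}$ has $n$-exact cone, hence becomes an isomorphism under $L$, so $\Phi(\mathbb{P}_{X^{\bullet}})\cong X^{\bullet}$.

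The opposite inclusion is the technical heart and the step I expect to cost the most care: I must show every $P^{\bullet}\in \K^{-,\boldsymbol{nb}}(\fP_n)$ is $n$-quasi-isomorphic to a bounded complex, so that $\Phi(P^{\bullet})\in\D^{b}_{n}(R)$. Naively smart-truncating from below fails, because $\Hom_R(L,-)$ is only left exact and need not preserve $0\to \im d^{a-1}\to P^{a}\to \Coker d^{a-1}\to 0$. The remedy is to truncate with the \emph{image} rather than the cokernel: set $a:=\infn P^{\bullet}$ (finite by the $\boldsymbol{nb}$ condition), pick $b$ bounding $P^{\bullet}$ above, and form the bounded complex $Z^{\bullet}:=(0\to \im d^{a-1}\to P^{a}\to\cdots\to P^{b}\to 0)$ with $\im d^{a-1}$ in degree $a-1$, together with $\mu\colon P^{\bullet}\to Z^{\bullet}$ equal to the corestriction $d^{a-1}$ in degree $a-1$ and the identity in degrees $\geq a$. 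Its kernel is the kernel-truncation $(\cdots\to P^{a-2}\to \ker d^{a-1}\to 0)$, which is $n$-exact by the left-exact (kernel-preserving) argument underlying Lemma \ref{fnpure-homotopy}; moreover the degree-$(a-1)$ column $0\to \ker d^{a-1}\to P^{a-1}\xrightarrow{d^{a-1}} \im d^{a-1}\to 0$ is $n$-exact, since $P^{\bullet}$ is $n$-exact (hence exact) at $a-1$ and this is exactly the defining sequence $0\to \ker d^{a-1}\to P^{a-1}\to \Coker d^{a-2}\to 0$ after the identification $\Coker d^{a-2}\cong \im d^{a-1}$.

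Thus $0\to \ker\mu\to P^{\bullet}\xrightarrow{\mu} Z^{\bullet}\to 0$ is degreewise $n$-exact, so applying $\Hom_R(L,-)$ for any $n$-projective $L$ yields a short exact sequence of complexes whose kernel $\Hom_R(L,\ker\mu)$ is acyclic; the long exact sequence then forces $\Hom_R(L,\mu)$ to be a quasi-isomorphism, whence $\mu$ is an $n$-quasi-isomorphism by Proposition \ref{Char-fn-quas-iso}. Consequently $\Phi(P^{\bullet})\cong L(Z^{\bullet})\in \D^{b}_{n}(R)$, which establishes the second inclusion. Combining the two inclusions, the essential image of $\Phi$ is exactly $\D^{b}_{n}(R)$, and together with full faithfulness this identifies $\Phi$ as the asserted triangulated equivalence $\K^{-,\boldsymbol{nb}}(\fP_n)\cong \D^{b}_{n}(R)$.
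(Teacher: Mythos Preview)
Your argument is correct and follows the same architecture as the paper's: restrict the localization functor to $\K^{-,\boldsymbol{nb}}(\fP_n)$, use Proposition~\ref{prop-K-D} for full faithfulness, Proposition~\ref{5.10} for essential surjectivity onto $\D^b_{n}(R)$, and a truncation to show the essential image lies inside $\D^b_{n}(R)$.

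The one substantive difference is the truncation you choose. The paper truncates with the \emph{cokernel}, passing from $X^{\bullet}$ to $(\cdots\to 0\to \Coker d^{m-1}\to X^{m+1}\to\cdots)$ and asserting this map is an $n$-quasi-isomorphism. You instead truncate with the \emph{image}, placing $\im d^{a-1}$ in degree $a-1$, precisely because you are worried that $\Hom_R(L,-)$ need not carry $0\to\im d^{m-1}\to X^m\to\Coker d^{m-1}\to 0$ to a short exact sequence. Your concern is legitimate: if one applies $\Hom_R(P,-)$ to the paper's map and computes $H^m$, one obtains the canonical comparison $H^m\big(\Hom_R(P,X^{\bullet})\big)\to \Hom_R\big(P,H^m(X^{\bullet})\big)$, which need not be surjective for arbitrary $P\in\fP_n$. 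The paper's justification (via Lemma~\ref{fnpure-homotopy}(2)) ultimately rests on the tensor characterization of $n$-exactness, available only in the commutative case (Theorem~\ref{cohen}(4)). Your image truncation avoids this entirely: the degreewise sequence $0\to\ker d^{a-1}\to P^{a-1}\to\im d^{a-1}\to 0$ is $n$-exact directly from the definition of $n$-exactness at $a-1$, so $\Hom_R(L,-)$ produces an honest short exact sequence of complexes with acyclic kernel, and the long exact sequence finishes the job. Thus your proof is not only correct but in fact patches a point the paper leaves underjustified over non-commutative rings.
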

	\begin{proof}
		Let $\mathbb{H}$ be the composition $$\K^{-,\boldsymbol{nb}}(\fP_n)\hookrightarrow \K^{-}(R)\xrightarrow{\mathbb{F}}\D^-_{n}(R)$$
		of triangle functors, where $\mathbb{F}$ is the localization functor. For any $X^{\bullet}\in \K^{-,\boldsymbol{nb}}(\fP_n)$, there exists $m\in \mathbb{Z}$ such that $\boldsymbol{inf_n}X^{\bullet}=m$. Then $X^{\bullet}$ is $n$-exact in degree $\leq m-1$. Consider the following commutative diagram
		$$\xymatrix@R=20pt@C=15pt{
			X^{\bullet}\ar[d]^{f}&\cdots\ar[r]^{}&X^{m-2}\ar[d] \ar[r]^{} & X^{m-1}\ar[d]^{}\ar[r]^{} & X^m\ar@{->>}[d]\ar[r]^{} &X^{m+1}\ar@{=}[d]^{}\ar[r]^{} & X^{m+2}\ar@{=}[d]^{}\ar[r]^{}& \cdots \\
			X_{\supset m}^{\bullet}&\cdots\ar[r]^{}&0 \ar[r]^{} & 0\ar[r]^{} & \Coker(d_{X^{\bullet}}^{m-1})\ar[r]^{} &X^{m+1}\ar[r]^{} & X^{m+2}\ar[r]^{} &\cdots\\}$$
		
		Note that the cochain map $f$ is an $n$-quasi-isomorphism. It follows that $\mathbb{H}(X^{\bullet})\cong X_{\supset m}^{\bullet}$  in $\D_{n}(R)$. So $\mathbb{H}(X^{\bullet})\in \D_{n}(R)$ as $X^{\bullet}\in \K^{-,\boldsymbol{nb}}(\fP_n)$. Hence $\mathbb{H}$ induces a functor from $\K^{-,\boldsymbol{nb}}(\fP_n)$ to $\D^b_{n}(R)$ which can also be denoted by $\mathbb{H}$. By Theorem \ref{fully faithful}, $\mathbb{H}$ is fully faithful and dense. So $\D^b_{n}(R)\cong \K^{-,\boldsymbol{nb}}(\fP_n)$.
	\end{proof}

	\begin{remark}
We must remark that if $R$ is a commutative ring, then the ``injective''  versions of all results in this section also hold.
	\end{remark}

	\section{The relation between derived categories and $n$-derived categories}

Note that
	$$\K^{-,\boldsymbol{nb}}(\fP_n)= \left\{X^{\bullet}\in K^{-}(\fP_n) \, \vline \,\begin{matrix}  \text{there exists }
		m\in\mathbb{Z}, \text{ such that }\\
		\mathrm{H}^{i}(\mathrm{Hom}_{R}(P,X^{\bullet})) = 0, \forall i\leq m, \forall P\in\fP_n
	\end{matrix} \right\}.$$
	
	We denote by $\K^{-,b}(\mathscr{P})$ the homotopy category of upper bounded complexes of projective modules with only finitely many non-zero cohomologies.
	
	\begin{lemma}\label{5.2} Let $X^{\bullet}\in \K^{-,b}(\mathscr{P})$. Then there exists a quasi-isomorphism $X^{\bullet} \rightarrow P^{\bullet}$ with $P^{\bullet}\in \K^{-,\boldsymbol{nb}}(\fP_n)$.
	\end{lemma}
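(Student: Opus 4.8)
The plan is to reduce to the bounded situation already settled in Proposition \ref{5.10}: I would first replace $X^{\bullet}$ by a quasi-isomorphic \emph{bounded} complex, apply the $n$-projective resolution machinery there, and finally transport the resulting complex of $n$-projectives back along a homotopy-projectivity argument to obtain an honest cochain map in the prescribed direction.

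First, since $X^{\bullet}$ has only finitely many nonzero cohomologies, choose $M\in\mathbb{Z}$ with $\mathrm{H}^{i}(X^{\bullet})=0$ for all $i<M$, so that $X^{\bullet}$ is exact as a complex of modules in every degree $<M$. Form the soft truncation
$$Y^{\bullet}:=\tau^{\geq M}X^{\bullet}:\quad \cdots\rightarrow 0\rightarrow \Coker(d_{X^{\bullet}}^{M-1})\rightarrow X^{M+1}\rightarrow X^{M+2}\rightarrow\cdots,$$
which is bounded because $X^{\bullet}$ is bounded above, so $Y^{\bullet}\in\K^{b}(R)$. The canonical map $q:X^{\bullet}\rightarrow Y^{\bullet}$ (the identity above degree $M$, the quotient $X^{M}\twoheadrightarrow\Coker(d_{X^{\bullet}}^{M-1})$ in degree $M$, and zero below) is a quasi-isomorphism: both complexes have vanishing cohomology in degrees $<M$, and the induced map is an isomorphism on $\mathrm{H}^{i}$ for $i\geq M$.

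Next, apply Proposition \ref{5.10} to $Y^{\bullet}\in\K^{b}(R)$ to obtain $\mathbb{P}_{Y^{\bullet}}\in\K^{-,\boldsymbol{nb}}(\fP_n)$ together with an $n$-quasi-isomorphism $p:\mathbb{P}_{Y^{\bullet}}\rightarrow Y^{\bullet}$. Because $R$ is itself $n$-projective, every $n$-exact complex is exact; applying this to $\Cone^{\bullet}(p)$ shows that $p$ is already an ordinary quasi-isomorphism. At this stage I have a forward quasi-isomorphism $q:X^{\bullet}\rightarrow Y^{\bullet}$ and a backward quasi-isomorphism $p:\mathbb{P}_{Y^{\bullet}}\rightarrow Y^{\bullet}$, and the remaining task is to upgrade this zigzag to a genuine cochain map $X^{\bullet}\rightarrow\mathbb{P}_{Y^{\bullet}}$.

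This is where the crux lies. I would invoke that a bounded-above complex of projective modules is homotopy projective, so that $\Hom_{\K(R)}(X^{\bullet},-)$ carries quasi-isomorphisms to bijections; in particular $\Hom_{\K(R)}(X^{\bullet},p)$ is an isomorphism. Lifting the class of $q$ through it produces a cochain map $g:X^{\bullet}\rightarrow\mathbb{P}_{Y^{\bullet}}$ with $pg\sim q$ in $\K(R)$, and since $p$ and $q$ are quasi-isomorphisms, the two-out-of-three property forces $g$ to be a quasi-isomorphism as well. Setting $P^{\bullet}:=\mathbb{P}_{Y^{\bullet}}\in\K^{-,\boldsymbol{nb}}(\fP_n)$ then finishes the proof. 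The only genuinely delicate point is this last lifting step: the truncation and the passage from $n$-exactness to exactness are formal, but obtaining an actual map in the direction $X^{\bullet}\rightarrow P^{\bullet}$ (rather than a mere roof in $\D(R)$) rests essentially on the homotopy-projectivity of $X^{\bullet}$.
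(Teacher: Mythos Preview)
Your argument is correct: truncating to a bounded complex, invoking Proposition~\ref{5.10}, and then using the K-projectivity of $X^{\bullet}\in\K^{-}(\mathscr{P})$ to lift through the backward quasi-isomorphism all work as you describe.

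However, the paper proceeds much more directly and elementarily. It simply picks $m$ with $\Hh^{i}(X^{\bullet})=0$ for $i\leq m$, takes an $n$-projective resolution $\cdots\to P^{m-2}\to P^{m-1}\to \Ker(d_{X^{\bullet}}^{m})\to 0$ of the kernel (using that $\fP_n$ is precovering, Theorem~\ref{pre-c-e-fpn}), and defines $P^{\bullet}$ by splicing this resolution onto the tail $X^{m}\to X^{m+1}\to\cdots$. Since below degree $m$ the original $X^{\bullet}$ is a projective resolution of the same kernel, the classical Comparison Theorem produces the cochain map $X^{\bullet}\to P^{\bullet}$, which is visibly a quasi-isomorphism. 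This avoids both Proposition~\ref{5.10} (whose proof is an involved induction over mapping cones) and any appeal to K-projectivity. A further practical advantage of the paper's construction is that the resulting $P^{\bullet}$ satisfies $P^{i}=X^{i}$ for $i\geq m$; this explicit agreement is invoked in the very next lemma (Lemma~\ref{5.3}), so your less explicit $P^{\bullet}=\mathbb{P}_{Y^{\bullet}}$ would not slot into that argument without modification.
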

	\begin{proof}
		Since $X^{\bullet}\in \K^{-,b}(\mathscr{P})$, there is an integer $m$ such that $\Hh_i(X^{\bullet}) = 0$ for $i\leq m.$ Since $\fP_n$ is precovering by Theorem \ref{pre-c-e-fpn}, we can choose an $n$-projective resolution $\rightarrow P^{m-2}\rightarrow P^{m-1}\rightarrow \Ker(d_{X^{\bullet}}^m)\rightarrow 0$ for $\Ker(d_{X^{\bullet}}^m)$. By the Comparison Theorem
		(\cite[Theorem 6.16]{R09}), we have a cochain map
		$$\xymatrix@R=20pt@C=20pt{
			X^{\bullet}\ar[d]^{f}&&\cdots\ar[r]^{}&X^{m-2}\ar[d] \ar[r]^{} & X^{m-1}\ar[d]^{}\ar[r]^{} & X^m\ar@{=}[d]\ar[r]^{} &X^{m+1}\ar@{=}[d]^{}\ar[r]^{} & X^{m+2}\ar@{=}[d]^{}\ar[r]^{}& \cdots \\
			P^{\bullet}&&\cdots\ar[r]^{}&P^{m-2} \ar[r]^{} & P^{m-1}\ar[r]^{} & X^m\ar[r]^{} &X^{m+1}\ar[r]^{} & X^{m+2}\ar[r]^{} &\cdots\\}$$
		Clearly, $f$ is a quasi-isomorphism and  $P^{\bullet}\in \K^{-,\boldsymbol{nb}}(\fP_n)$.
	\end{proof}

	\begin{lemma}\label{5.3} Let  $X^{\bullet}\in \K^{-,b}(\mathscr{P})$ and $G^{\bullet}\in \K^{-,\boldsymbol{nb}}(\fP_n)$. Then for any cochain map
		$g : X^{\bullet} \rightarrow  G^{\bullet}$, there is a quasi-isomorphism $f : X^{\bullet}\rightarrow  G^{\bullet}$ with $P^{\bullet}\in \K^{-,\boldsymbol{nb}}(\fP_n)$, and a
		cochain map $h : P^{\bullet} \rightarrow G^{\bullet}$, such that $g\sim hf$.
	\end{lemma}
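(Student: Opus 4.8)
The plan is to let $f$ be exactly the quasi-isomorphism furnished by Lemma \ref{5.2} and then to lift $g$ along it by two descending inductions. First I would fix an integer $m$ small enough that simultaneously $\Hh^{i}(X^{\bullet})=0$ for all $i\leq m$ (available since $X^{\bullet}\in\K^{-,b}(\mathscr{P})$ has only finitely many nonzero cohomologies) and $G^{\bullet}$ is $n$-exact in every degree $\leq m$ (available since $G^{\bullet}\in\K^{-,\boldsymbol{nb}}(\fP_n)$). Running the construction of Lemma \ref{5.2} with this $m$ produces $f\colon X^{\bullet}\to P^{\bullet}$ with $P^{i}=X^{i}$, $f^{i}=\Id$ and $d^{i}_{P^{\bullet}}=d^{i}_{X^{\bullet}}$ for $i\geq m$, while $\cdots\to P^{m-2}\to P^{m-1}\to X^{m}$ is an $n$-projective resolution of $\Ker(d^{m}_{X^{\bullet}})$; thus $f$ is a quasi-isomorphism and $P^{\bullet}\in\K^{-,\boldsymbol{nb}}(\fP_n)$. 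Note also that $\mathscr{P}\subseteq\fP_n$, so every $X^{i}$ is itself $n$-projective.

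Second, I would define a cochain map $h\colon P^{\bullet}\to G^{\bullet}$ by setting $h^{i}:=g^{i}$ for $i\geq m$ (legitimate, since there $f^{i}=\Id$ and the differentials of $P^{\bullet}$ and $X^{\bullet}$ coincide) and by descending induction for $i\leq m-1$. Given $h^{i}$, the map $\varphi:=h^{i}d^{i-1}_{P^{\bullet}}\colon P^{i-1}\to G^{i}$ satisfies $d^{i}_{G^{\bullet}}\varphi=0$: for $i\leq m-1$ this is immediate from $d^{i}_{P^{\bullet}}d^{i-1}_{P^{\bullet}}=0$, and at the junction $i=m$ it follows because $d^{m-1}_{P^{\bullet}}$ factors through $\Ker(d^{m}_{X^{\bullet}})$. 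Since $P^{i-1}\in\fP_n$ and $G^{\bullet}$ is $n$-exact in degrees $\leq m$, Proposition \ref{prop-K}(1) (in its degreewise form) makes $\Hom_R(P^{i-1},G^{\bullet})$ exact at degree $i$, so $\varphi$ lifts to some $h^{i-1}\colon P^{i-1}\to G^{i-1}$ with $d^{i-1}_{G^{\bullet}}h^{i-1}=\varphi=h^{i}d^{i-1}_{P^{\bullet}}$; this is precisely the cochain-map identity, so $h$ is well defined.

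Third, I would verify $hf\sim g$. The difference $\delta:=hf-g$ is a cochain map that vanishes in degrees $\geq m$, so I set $s^{i}:=0$ for $i\geq m$ and construct a contracting homotopy $s^{i}\colon X^{i}\to G^{i-1}$ downward, solving at each stage $d^{i-1}_{G^{\bullet}}s^{i}=\delta^{i}-s^{i+1}d^{i}_{X^{\bullet}}=:\psi^{i}$. A short computation from the cochain identity $d^{i}_{G^{\bullet}}\delta^{i}=\delta^{i+1}d^{i}_{X^{\bullet}}$ and the already-established homotopy relation in degree $i+1$ gives $d^{i}_{G^{\bullet}}\psi^{i}=0$, i.e.\ $\psi^{i}(X^{i})\subseteq\Ker(d^{i}_{G^{\bullet}})$. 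Because $R\in\fP_n$, the $n$-exactness of $G^{\bullet}$ below $m$ upgrades to ordinary exactness, so $\Ker(d^{i}_{G^{\bullet}})=\Im(d^{i-1}_{G^{\bullet}})$ there; as $X^{i}$ is projective, $\psi^{i}$ then lifts through $d^{i-1}_{G^{\bullet}}$ to the required $s^{i}$. This produces the homotopy $hf\sim g$ and finishes the argument. I expect the only genuine difficulty to be the bookkeeping around the junction degree $m$---ensuring the prescribed values $h^{i}=g^{i}$ for $i\geq m$ mesh with the inductively lifted $h^{i-1}$ and with the homotopy---together with the repeated, but essential, observation that $n$-exactness of $G^{\bullet}$ specializes to genuine exactness through the $n$-projective module $R$.
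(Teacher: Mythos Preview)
Your proposal is correct and follows essentially the same route as the paper's proof: both choose an integer $m$ below which $X^{\bullet}$ has vanishing cohomology and $G^{\bullet}$ is $n$-exact, invoke Lemma~\ref{5.2} to build $f\colon X^{\bullet}\to P^{\bullet}$ with $P^{i}=X^{i}$ for $i\geq m$, set $h^{i}=g^{i}$ in that range and extend $h$ downward using the $n$-projectivity of $P^{i}$ against the $n$-exactness of $G^{\bullet}$, and finally build the null-homotopy for $g-hf$ downward using the ordinary projectivity of $X^{i}$ against the (ordinary) exactness of $G^{\bullet}$ in low degrees. The only cosmetic difference is that the paper phrases the choice of $m$ via $\Hh^{i}(\Hom_R(M,X^{\bullet}))=0$ for projective $M$ rather than $\Hh^{i}(X^{\bullet})=0$, which is equivalent here.
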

	\begin{proof} Since $X^{\bullet}\in \K^{-,b}(\mathscr{P})$, there exists an integer $m(X^{\bullet})$ such that $\Hh^i
		(\Hom_R(M, X^{\bullet})) = 0$ for any $i\leq m(X^{\bullet})$ and any projective module $M$. Similarly, since $G^{\bullet}\in \K^{-,\boldsymbol{nb}}(\fP_n)$,
		there exists an integer $m(G^{\bullet})$ such that $\Hh_j (\Hom_R(N, G^{\bullet}))=0$ for any $j \leq m(G^{\bullet})$ and any $N\in \fP_n.$ Let $m = \min\{m(X^{\bullet}), m(G^{\bullet})\}$. By Lemma \ref{5.2}, we get a quasi-isomorphism
		$f : X^{\bullet}\rightarrow  P^{\bullet}$, where $P^{\bullet}\in \K^{-,\boldsymbol{nb}}(\fP_n)$ with $P_i = X_i$ for $i\geq m$.
		For any $i\geq m$, let $f^i = \Id_{X^i}$ , and let $h^i = g^i$
		. Since $G^{\bullet}\in \K^{-,\boldsymbol{nb}}(\fP_n)$, the sequence
		$$0\rightarrow \Ker(d^{m-1}_{G^{\bullet}})\rightarrow G^{m-1}\rightarrow \Ker(d^{m}_{G^{\bullet}})\rightarrow  0$$
		is $n$-exact. In particular, we have an exact sequence
		$$0\rightarrow\Hom_R(P^{m-1}, \Ker(d^{m-1}_{G^{\bullet}}))\rightarrow \Hom_R(P^{m-1},G^{m-1})\rightarrow \Hom_R(P^{m-1},\Ker(d^{m}_{G^{\bullet}}))\rightarrow  0$$
		It follows from
		$$d^m_{G^{\bullet}} h^md^{m-1}_{P^{\bullet}}=d^m_{G^{\bullet}} g^md^{m-1}_{P^{\bullet}}= g^{m+1}d^m_{P^{\bullet}} d^{m-1}_{P^{\bullet}} = 0$$
		that $h^md^{m-1}_{P^{\bullet}}\in \Hom_R(P^{m-1}, \Ker(d^m_{G^{\bullet}}) ).$ This yields an $R$-homomorphism $h^{m-1}\in \Hom_R(P^{m-1},G^{m-1})$ such that $d^{m-1}_{G^{\bullet}} h^{m-1} = h^md^{m-1}_{P^{\bullet}} $. Inductively, we get $R$-homomorphisms $h_j : P^j\rightarrow G^j(j<m)$ such that $d^{j-1}_{G^{\bullet}}h^{j-1} = h^jd^{j-1}_{P^{\bullet}}$
		. Hence we have a cochain map $h : P^{\bullet}\rightarrow G^{\bullet}.$
		
		Now consider the following diagram:
		
		$$\xymatrix@R=25pt@C=12pt{
			X^{\bullet}_2\ar[d]^{}  &\cdots\ar[r]^{}&X^{m-2} \ar[rr]^{}\ar[dd]^{g^{m-2}} \ar[rd]^{f^{m-2}} && X^{m-1}\ar[dd]^{g^{m-1}}\ar[rd]^{f^{m-1}}\ar[rr]^{} && X^{m}\ar[rr]\ar[dd]^{g^{m}}\ar[rd]^{\Id} && X^{m+1}\ar[dd]^{g^{m+1}}\ar[rd]^{\Id}\ar[r]^{} &\cdots \\
			P^{\bullet}\ar[d]^{} &\cdots\ar[rr]^{}&&P^{m-2} \ar[rr]^{}\ar@{-->}[ld]^{h^{m-2}} && P^{m-1}\ar@{-->}[ld]^{h^{m-1}}\ar[rr]^{} &&  X^{m}\ar[rr]^{} \ar@{-->}[ld]^{h^{m}} &&X^{m+1}\ar@{-->}[ld]^{h^{m+1}}\ar[r]^{} &\cdots \\
			G^{\bullet}&\cdots\ar[r]^{}&G^{m-2}\ar[rr]^{} &&  G^{m-1}\ar[rr]^{} && G^{m}\ar[rr]^{} &&G^{m+1}\ar[r]^{} &\cdots\\}$$

		Clearly, $g^i - h^if^i = 0$ for $i\geq m$. Since ${G^{\bullet}}$ is exact in degree $< m$, the sequence $0 \rightarrow
		\Ker(d^{m-2}_{G^{\bullet}}) \rightarrow  G^{m-2} \rightarrow  \Ker(d^{m-1}_{G^{\bullet}}) \rightarrow  0 $ is exact. Note that $d^{m-1}_{G^{\bullet}}(g^{m-1} - h^{m-1}f^{m-1}) = 0$,
		then $g^{m-1} - h^{m-1}f^{m-1}\in  \Hom_R(X^{m-1}, \Ker(d^{m-1}_{G^{\bullet}}) )$. Since $X^{m-1}$ is projective, there exists
		a map $s^{m-1}: X^{m-1} \rightarrow  G^{m-2}$ such that $g^{m-1} - h^{m-1}f^{m-1} = d^{m-2}_{G^{\bullet}} s^{m-1}$. By induction
		we get a homotopy $s : {X^{\bullet}} \rightarrow  {G^{\bullet}}[-1]$ with $s^i = 0$ for any $i\geq m$. Hence $g - hf : X^{\bullet} \rightarrow  G^{\bullet}$ is
		null homotopic, that is,  $g\sim hf$.
	\end{proof}

	\begin{lemma}\label{5.4}  Let $ P^{\bullet}\in \K^{-}(\fP_n)$. If $P^{\bullet}$ is $n$-exact, then $P^{\bullet} = 0$ in $\K(R)$.
	\end{lemma}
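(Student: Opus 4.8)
The plan is to exploit that $n$-exactness of a bounded-above complex is detected by the $\Hom$-complex against \emph{any} object of $\K^{-}(\fP_n)$, and then to test $P^{\bullet}$ against itself. First I would invoke Lemma~\ref{char-pex-pquiso}(1): since $P^{\bullet}\in\K^{-}(\fP_n)$ is $n$-exact, the complex of abelian groups $\Hom_R(Q^{\bullet},P^{\bullet})$ is exact for every $Q^{\bullet}\in\K^{-}(\fP_n)$. The crucial choice is $Q^{\bullet}=P^{\bullet}$ itself, which is legitimate precisely because $P^{\bullet}$ lies in $\K^{-}(\fP_n)$; this yields that the $\Hom$-complex $\Hom_R(P^{\bullet},P^{\bullet})$ is exact.

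Next I would read off the degree-zero cohomology. By the identity $\Hh^{m}\Hom_R(X^{\bullet},Y^{\bullet})=\Hom_{\K(R)}(X^{\bullet},Y^{\bullet}[m])$ recalled earlier in Section~4, taking $m=0$ and $X^{\bullet}=Y^{\bullet}=P^{\bullet}$ gives
$$\Hom_{\K(R)}(P^{\bullet},P^{\bullet})=\Hh^{0}\Hom_R(P^{\bullet},P^{\bullet})=0,$$
the last equality holding because the $\Hom$-complex is exact. In particular $\Id_{P^{\bullet}}=0$ in $\K(R)$.

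Finally I would conclude formally: in any additive category an object $X$ with $\Id_{X}=0$ is a zero object, since $\Id_{X}$ then factors through $0$ and $X$ is a retract of $0$. Hence $P^{\bullet}\cong 0$ in $\K(R)$, i.e. $P^{\bullet}$ is contractible, which is exactly the assertion $P^{\bullet}=0$ in $\K(R)$.

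I do not expect a genuine obstacle here: the whole argument collapses to applying Lemma~\ref{char-pex-pquiso}(1) with the test complex taken to be $P^{\bullet}$ itself, combined with the standard computation of the degree-zero cohomology of a $\Hom$-complex. The only point deserving a moment's care is verifying that the hypotheses of Lemma~\ref{char-pex-pquiso}(1) permit the test object to be $P^{\bullet}$, but this is immediate from the assumption $P^{\bullet}\in\K^{-}(\fP_n)$.
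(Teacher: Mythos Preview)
Your argument is correct and is genuinely different from the paper's. The paper proceeds by a direct hands-on induction: assuming $P^{\bullet}$ is bounded above at degree $0$, it observes that the $n$-exact sequence $0\to\Ker(d^{-1})\to P^{-1}\to P^{0}\to 0$ splits because $P^{0}\in\fP_n$, whence $\Ker(d^{-1})\in\fP_n$ as well; iterating this splitting step shows each $P^{i-1}\cong\Ker(d^{i-1})\oplus\Ker(d^{i})$, so $P^{\bullet}$ decomposes as a direct sum of contractible two-term complexes. Your route, by contrast, is a one-line categorical shortcut: invoke Lemma~\ref{char-pex-pquiso}(1) with the test complex equal to $P^{\bullet}$ itself to get $\Hh^{0}\Hom_R(P^{\bullet},P^{\bullet})=\Hom_{\K(R)}(P^{\bullet},P^{\bullet})=0$, hence $\Id_{P^{\bullet}}$ is null-homotopic. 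The paper's approach is entirely self-contained and yields an explicit contraction, while yours is shorter and cleaner but leans on the machinery (ultimately from \cite{CFH06}) already packaged into Lemma~\ref{char-pex-pquiso}; in spirit it is the same trick used in Proposition~\ref{pure-quasi-iso-homot-equ}, specialized to the map $0\to P^{\bullet}$.
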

	\begin{proof}
		Without loss of generality, we assume that
		$$P^{\bullet}= \cdots \rightarrow  P^{-2}\xrightarrow{d^{-2}} P^{-1}\xrightarrow{d^{-1}}P^{0}\rightarrow  0.$$
		The sequence $0 \rightarrow  \Ker(d^{-1}) \rightarrow  P^{-1} \rightarrow  P^{0} \rightarrow  0$ is exact and $n$-exact, and since
		$P^0$ is $n$-projective, this sequence splits which implies that $\Ker(d^{-1})$ is $n$-projective. Then one can apply the same argument to the sequence $0 \rightarrow  \Ker(d^{-2}) \rightarrow  P^{-2} \rightarrow  \Ker(d^{-1}) \rightarrow  0$ , and so on. Inductively, one has $P^{i- 1}=\Ker(d^{i- 1}) \oplus \Ker(d^{i})$ for
		every $i\leq 0$, and hence $P^{\bullet}$ is the direct sum of contractible complexes of the form
		$\cdots\rightarrow 0\rightarrow  \Ker(d^{i}) \xrightarrow{\Id}  \Ker(d^{i}) \rightarrow 0 \rightarrow  \cdots$. Thus $P^{\bullet} = 0$ in $\K(R)$.
	\end{proof}
	We denote by $\K_{ac}^{b}(\fP_n)$ the homotopy category of bounded exact complexes of $n$-projective modules.
	\begin{lemma}\label{5.5} Assume that $\fP_n$ is closed under kernels of epimorphisms. Let $P^{\bullet}\in \K^{-,\boldsymbol{nb}}(\fP_n)$. If
		$P^{\bullet}$ is exact, then  $P^{\bullet}\in \K_{ac}^{b}(\fP_n)$.
	\end{lemma}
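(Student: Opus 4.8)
The plan is to replace $P^\bullet$, up to homotopy, by a bounded truncation and to discard the remaining tail using Lemma \ref{5.4}. Write $d^i$ for the differentials, set $Z^i=\Ker(d^i)$, and put $m=\boldsymbol{inf_n}P^\bullet$, which is finite since $P^\bullet\in\K^{-,\boldsymbol{nb}}(\fP_n)$; thus $P^\bullet$ is $n$-exact in every degree $<m$, while it is ordinarily exact in every degree by hypothesis. Ordinary exactness yields, for each $i$, a short exact sequence $0\to Z^i\to P^i\to Z^{i+1}\to 0$ (using $\Coker(d^{i-1})\cong Z^{i+1}$), and this sequence is moreover $n$-exact whenever $i<m$.

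The first key step is to show every syzygy $Z^i$ is $n$-projective. Since $P^\bullet$ is bounded above, let $N$ be its top degree, so $Z^N=P^N\in\fP_n$. I would then induct downward on $N-i$: if $Z^{i+1}\in\fP_n$, then in $0\to Z^i\to P^i\to Z^{i+1}\to 0$ the map $P^i\to Z^{i+1}$ is an epimorphism between $n$-projective modules, so its kernel $Z^i$ is $n$-projective precisely by the standing hypothesis that $\fP_n$ is closed under kernels of epimorphisms. Hence all $Z^i\in\fP_n$. Note this step uses only ordinary exactness, boundedness above, and the closure hypothesis.

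Next I would form the short exact sequence of complexes coming from the soft truncation at $m-1$,
\[
0\to \tau^{\leq m-1}P^\bullet \to P^\bullet \to Q^\bullet \to 0,
\]
where $\tau^{\leq m-1}P^\bullet=(\cdots\to P^{m-2}\to Z^{m-1}\to 0)$ and $Q^\bullet=(0\to Z^m\to P^m\to\cdots\to P^N\to 0)$, with the leftmost term $Z^m\cong P^{m-1}/Z^{m-1}$ placed in degree $m-1$. The decisive point is the choice of truncation degree: in each degree this sequence is split, because the only nontrivial degree is $m-1$, where it reads $0\to Z^{m-1}\to P^{m-1}\to Z^m\to 0$, and this sequence is $n$-exact (as $m-1<m$) with $n$-projective cokernel $Z^m$, hence splits. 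A degreewise-split short exact sequence of complexes produces a distinguished triangle $\tau^{\leq m-1}P^\bullet\to P^\bullet\to Q^\bullet\to(\tau^{\leq m-1}P^\bullet)[1]$ in $\K(R)$. I expect this to be the main obstacle: the naive truncation at degree $m$ places the non-$n$-exact sequence $0\to Z^m\to P^m\to Z^{m+1}\to 0$ at the gluing spot, which fails to split (it is not $n$-exact, since $m=\boldsymbol{inf_n}P^\bullet$), so the sequence of complexes would not be degreewise split; truncating one step lower forces the connecting sequence to be $n$-exact and therefore split.

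Finally I would verify the two outer terms. The complex $Q^\bullet$ is bounded, has all terms in $\fP_n$, and is exact (injective on the new left end, and exact elsewhere as it agrees with $P^\bullet$ in degrees $\geq m$), so $Q^\bullet\in\K_{ac}^b(\fP_n)$. The complex $\tau^{\leq m-1}P^\bullet$ is a bounded-above complex of $n$-projectives, and it is $n$-exact: it coincides with $P^\bullet$ in degrees $<m-1$, while at degree $m-1$ its defining sequence reduces to $0\to Z^{m-1}\xrightarrow{\Id}Z^{m-1}\to 0$ because $d^{m-2}$ is onto $Z^{m-1}$ and the outgoing differential vanishes. Hence by Lemma \ref{5.4}, $\tau^{\leq m-1}P^\bullet=0$ in $\K(R)$, and the triangle forces $P^\bullet\cong Q^\bullet$ in $\K(R)$, giving $P^\bullet\in\K_{ac}^b(\fP_n)$. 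Equivalently, once $\tau^{\leq m-1}P^\bullet$ is known contractible, the degreewise-split projection $P^\bullet\to Q^\bullet$ is directly a homotopy equivalence, which also matches the spirit of Proposition \ref{pure-quasi-iso-homot-equ}.
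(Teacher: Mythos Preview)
Your argument is correct and follows essentially the same route as the paper: truncate at a degree where the gluing short exact sequence is $n$-exact (hence split, since the cokernel is $n$-projective), obtain a triangle in $\K(R)$, kill the bounded-above $n$-exact tail via Lemma~\ref{5.4}, and identify $P^\bullet$ with the bounded exact piece. The only cosmetic differences are that the paper works with an integer $m'$ for which $P^\bullet$ is $n$-exact in all degrees $\leq m'$ (your $m-1$), and that the paper observes the connecting map $h\colon \tau_{\geq m'+1}P^\bullet\to(\tau_{\leq m'}P^\bullet)[1]$ vanishes for degree reasons, giving a direct sum splitting before invoking Lemma~\ref{5.4}; you instead invoke Lemma~\ref{5.4} first and then read off the isomorphism from the triangle, which is equally valid. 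One tiny point: as stated, $\boldsymbol{inf_n}P^\bullet$ could be $+\infty$ under the alternative description of $\K^{-,\boldsymbol{nb}}(\fP_n)$ given just before Lemma~\ref{5.2}; in that case $P^\bullet$ is $n$-exact and Lemma~\ref{5.4} finishes immediately, so you may want to dispose of this trivial case before fixing a finite $m$.
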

	\begin{proof}
		
		Since $P^{\bullet}\in \K^{-,\boldsymbol{nb}}(\fP_n)$, there exists an integer $m \in\mathbb{Z}$ such that $\Hh^i
		(\Hom_R(Q,{P^{\bullet}})) = 0$ for any $i\leq m$ and any $Q \in\fP_n$. Define
		$$\tau_{\geq m+1}P^{\bullet} := \cdots  \rightarrow  0 \rightarrow  0 \rightarrow  \Im(d^m) \rightarrow  P^{m+1} \rightarrow  P^{m+2} \rightarrow  \cdots  ,$$
		$$\tau_{\leq m}P^{\bullet} := \cdots  \rightarrow  P^{m-2} \rightarrow  P^{m-1} \rightarrow \Ker(d^m) \rightarrow  0\rightarrow 0 \rightarrow  \cdots $$
		Then we obtain an exact sequence of complexes $0 \rightarrow  \tau_{\leq m}P^{\bullet}
		\xrightarrow{f}  P^{\bullet}
		\xrightarrow{g}  \tau_{\geq m+1}P^{\bullet} \rightarrow  0.$ Since
		$\fP_n$ is closed under kernels of epimorphisms by assumption, and $P^{\bullet}$ is exact and upper bounded, it is
		easy to check that $\Im(d^m), \Ker(d^m)\in \fP_n$. Moreover, $\Hh^m(\Hom_R(Q, P^{\bullet})) = 0$ for any $Q\in
		\fP_n$, which implies that the sequence $ 0 \rightarrow  \Ker(d^m) \rightarrow  P^m \rightarrow  \Im(d^m) \rightarrow  0$ is $n$-exact,
		and hence it is split. Thus, the sequence of complexes $0 \rightarrow \tau_{\leq m}P^{\bullet}
		\xrightarrow{f} P^{\bullet}
		\xrightarrow{g}  \tau_{\geq m+1}P^{\bullet}  \rightarrow  0$
		is split degree-wise, and then there is a triangle in the homotopy category $\K(R)$
		$$\tau_{\leq m}P^{\bullet} \xrightarrow{f} P^{\bullet}
		\xrightarrow{g}  \tau_{\geq m+1}P^{\bullet}\xrightarrow{h} \tau_{\leq m}P^{\bullet}[1].$$
		Clearly $h = 0$, and thus $P^{\bullet}= \tau_{\leq m}P^{\bullet}\oplus \tau_{\geq m+1}P^{\bullet}$. Since $P^{\bullet}\in \K^{-,\boldsymbol{nb}}(\fP_n)$, we have
		$\tau_{\leq m}P^{\bullet}\in K^-(\fP_n)$; moreover, $\tau_{\leq m}P^{\bullet}$ is $n$-exact, and then $\tau_{\leq m}P^{\bullet}=0$ by Lemma \ref{5.4}.
		Hence $P^{\bullet}\cong \tau_{\geq m+1}P^{\bullet} \in \K_{ac}^{b}(\fP_n).$
	\end{proof}
	
	\begin{remark}\label{7.5} Note that $\fP_n$ may be not closed under kernels of epimorphisms for $n\geq 1$. Indeed, let $K$ be an algebraic closed field, and  $R$ a  finite-dimensional hereditary $K$-algebra of infinite-representation type. Then there exists an indecomposable $R$-module $M$ with infinite dimension. Consider the exact sequence $0\rightarrow M\rightarrow E_1\rightarrow E_2\rightarrow 0,$ where each $E_i$ is an injective $R$-module. Then each $E_i$ is a direct sum of indecomposable injective modules with with finite dimensions. Recall from \cite[Theorem 1.1]{O82} that pure projective modules over a finite-dimensional $K$-algebra  are exactly any direct sums of indecomposable modules with with finite dimensions. So $E_1$ and $E_2$ are  pure projective modules, but $M$ is not pure projective. It follows by Corollary \ref{fnfn+1} that the class of all pure projective modules over $R$  are exactly the class $\fP_n(R)$  of all $n$-projective modules over $R$ for any $n\geq 1$. Consequently, the class $\fP_n(R)$  need not be closed under all kernels of epimorphisms  for any  $n\geq 1$.
	\end{remark}
	
	Recall from \cite{AF91} that a dg-projective resolution of a complex $X^{\bullet}$ is a quasi-isomorphism
	of complexes $f : P^{\bullet}\rightarrow X^{\bullet} $ with $P^{\bullet}$ dg-projective. By \cite[1.6]{AF91}, every complex has a dg-projective resolution. We denote by $\K^b_{ac}(\fP_n)$ the homotopy category of bounded exact complexes of $n$-projective modules. Now we are in a position to prove the main result of this section.

	\begin{theorem}\label{last}Assume that $\fP_n$ is closed under kernels of epimorphisms. There are  triangulated equivalences for each $n$:
		$$\D^b(R)\cong \D^b_{n}(R)/\K^b_{ac}(\fP_n)\cong \K^{-,\boldsymbol{nb}}(\fP_n)/\K^b_{ac}(\fP_n).$$
	\end{theorem}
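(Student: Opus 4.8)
The strategy is to deduce the theorem from the equivalence $\D^b_{n}(R)\cong \K^{-,\boldsymbol{nb}}(\fP_n)$ already obtained in Section 5. First I would observe that $\K^b_{ac}(\fP_n)$ is a thick subcategory of $\K^{-,\boldsymbol{nb}}(\fP_n)$: it is closed under cones, since the cone of a map of bounded exact complexes of $n$-projectives is again one, and under direct summands, since $n$-projectives are closed under summands. Moreover, the equivalence $\mathbb{H}\colon\K^{-,\boldsymbol{nb}}(\fP_n)\xrightarrow{\sim}\D^b_{n}(R)$ of Section 5, being ``view the complex in $\D^b_{n}(R)$'', carries this subcategory onto the copy of $\K^b_{ac}(\fP_n)$ sitting inside $\D^b_{n}(R)$. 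Since an equivalence matching thick subcategories descends to an equivalence of the corresponding Verdier quotients, the rightmost equivalence $\D^b_{n}(R)/\K^b_{ac}(\fP_n)\cong \K^{-,\boldsymbol{nb}}(\fP_n)/\K^b_{ac}(\fP_n)$ is then automatic. Thus the whole theorem reduces to producing one triangulated equivalence $\D^b(R)\cong \K^{-,\boldsymbol{nb}}(\fP_n)/\K^b_{ac}(\fP_n)$.

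To build the latter, I would invoke the classical equivalence $\K^{-,b}(\mathscr{P})\cong \D^b(R)$ given by projective resolutions, and prove instead that the composite
$$G\colon \K^{-,b}(\mathscr{P})\hookrightarrow \K^{-,\boldsymbol{nb}}(\fP_n)\twoheadrightarrow \K^{-,\boldsymbol{nb}}(\fP_n)/\K^b_{ac}(\fP_n)$$
is a triangulated equivalence. The inclusion is legitimate: a bounded-above complex of projectives with bounded cohomology is classically exact in all sufficiently small degrees, and since every $n$-projective module has projective dimension at most $n$ (Remark \ref{3.5}), applying $\Hom_R(P,-)$ with $P\in\fP_n$ along such an exact stretch of projectives yields, after dimension shifting, groups $\Ext^{\geq n+1}_R(P,-)=0$; hence $\boldsymbol{inf_n}$ is finite and the complex lies in $\K^{-,\boldsymbol{nb}}(\fP_n)$.

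For density of $G$: given $P^{\bullet}\in\K^{-,\boldsymbol{nb}}(\fP_n)$, it is $n$-exact, hence exact, in low degrees and is upper bounded, so it has bounded cohomology; choosing a projective resolution $X^{\bullet}\to P^{\bullet}$ with $X^{\bullet}\in\K^{-,b}(\mathscr{P})$, its cone lies in $\K^{-,\boldsymbol{nb}}(\fP_n)$ (a triangulated subcategory by Proposition \ref{prop-K}) and is exact, so by Lemma \ref{5.5} it belongs to $\K^b_{ac}(\fP_n)$; therefore $X^{\bullet}\cong P^{\bullet}$ in the quotient. For full faithfulness: a morphism $X^{\bullet}\to Y^{\bullet}$ in the quotient between objects of $\K^{-,b}(\mathscr{P})$ is a roof $X^{\bullet}\stackrel{s}{\Longleftarrow}Z^{\bullet}\stackrel{a}{\longrightarrow}Y^{\bullet}$ whose denominator $s$ has cone in $\K^b_{ac}(\fP_n)$, hence is a quasi-isomorphism. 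Resolving $Z^{\bullet}$ by a complex $W^{\bullet}\in\K^{-,b}(\mathscr{P})$ and using that a quasi-isomorphism between bounded-above complexes of projectives is a homotopy equivalence, together with Lemma \ref{5.5} to recognise the comparison maps as isomorphisms in the quotient, rewrites the roof as the image under $G$ of an honest chain map; the same projective-replacement argument shows that a map killed in the quotient is already null-homotopic. This is exactly where Lemmas \ref{5.2} and \ref{5.3} are used, to produce the resolutions and to lift maps out of projective complexes.

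Combining $G$ with $\K^{-,b}(\mathscr{P})\cong\D^b(R)$ yields $\D^b(R)\cong \K^{-,\boldsymbol{nb}}(\fP_n)/\K^b_{ac}(\fP_n)$, and transporting back along $\mathbb{H}$ supplies the middle term $\D^b_{n}(R)/\K^b_{ac}(\fP_n)$. I expect the main obstacle to be the full faithfulness of $G$: one must carry out the calculus-of-fractions bookkeeping carefully, verifying that every roof over $\K^b_{ac}(\fP_n)$ straightens to a genuine morphism after projective replacement. This is precisely where the standing hypothesis that $\fP_n$ be closed under kernels of epimorphisms enters, through Lemma \ref{5.5}, which is what guarantees that an exact object of $\K^{-,\boldsymbol{nb}}(\fP_n)$ actually lands in the bounded category $\K^b_{ac}(\fP_n)$.
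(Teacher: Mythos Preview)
Your proof is correct and follows essentially the same strategy as the paper, the chief difference being the direction of the comparison functor: the paper constructs $\overline{\eta}\colon \K^{-,\boldsymbol{nb}}(\fP_n)/\K^b_{ac}(\fP_n)\to\D^b(R)$ by composing the inclusion into $\K^-(R)$ with the localization functor, whereas you build $G$ going the other way, from $\K^{-,b}(\mathscr{P})\cong\D^b(R)$ into the quotient. Your observation that $\K^{-,b}(\mathscr{P})\subseteq\K^{-,\boldsymbol{nb}}(\fP_n)$ holds outright, via the dimension-shift argument using $\pd_RP\leq n$ for $P\in\fP_n$, is a mild sharpening of Lemma~\ref{5.2}, which only produces a quasi-isomorphic replacement inside $\K^{-,\boldsymbol{nb}}(\fP_n)$; with this inclusion in hand your full-faithfulness step does not really need Lemma~\ref{5.3} either, since composing with a dg-projective resolution $W^\bullet\to Z^\bullet$ already supplies the needed map $W^\bullet\to Y^\bullet$ without any lifting. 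Both arguments pivot on exactly the same point: Lemma~\ref{5.5} (and hence the closure hypothesis on $\fP_n$) is what forces an exact object of $\K^{-,\boldsymbol{nb}}(\fP_n)$ down into $\K^b_{ac}(\fP_n)$, and this is precisely what makes the calculus of fractions collapse in either direction.
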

	\begin{proof}
		Let $Inc : \K^{-,\boldsymbol{nb}}(\fP_n) \rightarrow  \K^-(R)$ be the embedding functor,
		and $Q : \K^-(R) \rightarrow  \D^-(R)$ be the canonical localization functor. We denote the composition functor by $\eta: \K^{-,\boldsymbol{nb}}(\fP_n) \rightarrow  \D^-(R)$. Since $\eta(\K^b_{ac}(\fP_n)) = 0$, by the universal property of quotient functor we have a unique triangle functor
		$$\overline{\eta}:\K^{-,\boldsymbol{nb}}(\fP_n)/\K^b_{ac}(\fP_n)\rightarrow \D^-(R).$$
		Clearly, $\Im(\overline{\eta})\subseteq \D^b(R)$. By Lemma \ref{5.2}, for any $X^{\bullet}\in \K^{-,b}(\Proj)\cong \D^b(R)$, there
		exists $P^{\bullet} \in \K^{-,\boldsymbol{nb}}(\fP_n)$ such that $X^{\bullet}\cong \overline{\eta}(P^{\bullet})$. Hence the triangle functor $\overline{\eta}$ is dense.
		
		Let $P^{\bullet}_1, P^{\bullet}_2 \in \K^{-,\boldsymbol{nb}}(\fP_n)$ and $\alpha/s \in\Hom_{D^b(R)}(P^{\bullet}_1, P^{\bullet}_2)$, where $s : P^{\bullet}_1\Leftarrow Y^{\bullet}$ is a quasi-isomorphism with $Y^{\bullet}\in \K^-(R)$ and $\alpha : Y^{\bullet} \rightarrow P^{\bullet}_2$ is a morphism in $K^-(R)$. In fact, we can assume $Y^{\bullet}\in \K^{-,b}(R)$ since $P^{\bullet}_1\in \K^{-,\boldsymbol{nb}}(\fP_n)\subseteq \K^{-,b}(R)$. Let $t : X^{\bullet}\rightarrow Y^{\bullet}$ be a dg-projective resolution of $Y^{\bullet}$ , then $t$ is a quasi-isomorphism, and thus we can let
		$X^{\bullet}\in \K^{-,b}(\mathscr{P})$.
		
		For $X^{\bullet} \in  \K^{-,b}(\mathscr{P})$ and $P^{\bullet}_1, P^{\bullet}_2 \in  K^{-,n}(\fP_n)$, there exist integers $m(X^{\bullet}), m(P^{\bullet}_1)$
		and $m(P^{\bullet}_2)$, such that $H^i
		(\Hom_R(M, X^{\bullet})) = 0$ for any $i \leq  m(X^{\bullet})$ and any $M \in  P^{\bullet}$, and
		$\Hh^j (\Hom_R(N, P_k)) = 0$ for any $j \leq  m(P_k)$ and any $N \in  \fP_n (k = 1, 2)$. Let $m =
		\min\{m(X^{\bullet}), m(P^{\bullet}_1), m(P^{\bullet}_2)\}$. By Lemma \ref{5.2}, there is a complex $P^{\bullet} \in  \K^{-,\boldsymbol{nb}}(\fP_n)$ and a
		quasi-isomorphism $f : X^{\bullet} \rightarrow  P^{\bullet}$. Moreover, for morphisms $st : X^{\bullet} \rightarrow  P^{\bullet}_1$ and $\alpha t : X^{\bullet} \rightarrow  P^{\bullet}_2$,
		we have morphisms $g_1 : P^{\bullet} \rightarrow  P^{\bullet}_1$ and $g_2 : P^{\bullet} \rightarrow  P^{\bullet}_2$ satisfying $st \sim g_1f$ and $\alpha t \sim g_2f$ by
		Lemma \ref{5.3}. Then we have the following commutative diagram in $\K^-(R)$:
		$$\xymatrix@R=20pt@C=25pt{ &\ar@{=>}[ld]_{s} Y^{\bullet} \ar[rd]^{\alpha}&\\
			P^{\bullet}_1& \ar@{=>}[l]_{st}   X^{\bullet}\ar@{=>}[u]^{t}\ar@{=>}[d]^{f}\ar[r]^{\alpha t} &P^{\bullet}_2\\
			&  \ar@{=>}[lu]^{g_1} P^{\bullet}\ar[ru]_{g_2} & \\
		}$$
		where the double arrowed morphisms mean quasi-isomorphisms. Note that $g_1$ is also a
		quasi-isomorphism, hence the mapping cone $\Cone^{\bullet}(g_1)$ is exact. Moreover, $\Cone^{\bullet}(g_1)\in
		\K^{-,\boldsymbol{nb}}(\fP_n)$, and hence $\Cone^{\bullet}(g_1)\in \K_{ac}^{b}(\fP_n)$ by Lemma \ref{5.5}. Thus $g_2/g_1$ is a morphism in $\K^{-,\boldsymbol{nb}}(\fP_n)/\K_{ac}^{b}(\fP_n)$ and $\alpha/s = g_2/g_1 =\overline{\eta}(g_2/g_1)$. This implies that the functor $\overline{\eta}$ is full.
		
		It remains to prove $\overline{\eta}$ is faithful. Because the triangle functor $\overline{\eta}$ is full, by \cite[p.446]{R89},
		it suffices to show that it sends non-zero objects to non-zero objects. Suppose
		$P^{\bullet} \in \K^{-,\boldsymbol{nb}}(\fP_n)$ and $\overline{\eta}(P^{\bullet}) = 0$, then $P^{\bullet}$ is exact and it follows from Lemma \ref{5.5} that
		$P^{\bullet} \in \K_{ac}^{b}(\fP_n)$. Hence $\overline{\eta}$ is faithful. This completes the proof.
	\end{proof}



	\vspace{0.5cm}
\textbf{Acknowledgement}.
This work was supported by the NSF of China (No. 12271292), and Youth Innovation Team of Universities of Shandong Province (No. 2022KJ314).

\end{document}